\newtheorem{thm}{Theorem}[section]
\newtheorem{cor}[thm]{Corollary}
\newtheorem{lem}[thm]{Lemma}
\newtheorem{prop}[thm]{Proposition}
\theoremstyle{mydefinition}
\theoremstyle{myremark}
\newtheorem{exa}[thm]{Example}
\def\CT{\mathop{\mathrm{CT}}}
\title[Young Tableaux with Periodic Walls]{Simple Generating Functions for Certain Young Tableaux with Periodic Walls}
\author[Feihu Liu and Guoce Xin]{Feihu Liu$^{1}$ and Guoce Xin$^{2, *}$}
\address{$^{1, 2}$School of Mathematical Sciences, Capital Normal University,
Beijing 100048, PR China}
\email{$^1$\texttt{liufeihu7476@163.com}\ \& $^2$\texttt{guoce\_xin@163.com}}
\date{January 26, 2024}
\thanks{$*$ This work is partially supported by NSFC(12071311).}
\begin{document}
\maketitle

\begin{abstract}
Recently, Banderier et. al. considered Young tableaux with walls, which are similar to standard Young tableaux, except that local decreases are allowed at some walls. We count the numbers $\overline{f}_m(n)$ of Young tableaux of shape $2\times mn$ with walls, that allow local decreases at the $(jm+i)$-th columns for all $j=0,\dots, n-1$ and $i=2,\dots, m$. We find that they have nice generating functions (thanks to the OEIS) as follows.
$$\overline{F}_m(x)=\sum_{n\geq 0}\overline{f}_m(n)x^n=\prod_{k=1}^{m}C(e^{k\frac{2\pi i}{m}} x^\frac{1}{m})=\exp \left(\sum_{n\geq 1}\binom{2mn-1}{mn-1}\frac{x^n}{n}\right),$$
where $C(x)=\frac{1-\sqrt{1-4x}}{2x}$ is the well-known Catalan generating function.
We prove generalizations of this result. Firstly, we use the Yamanouchi word to transform Young tableaux with horizontal walls into lattice paths. This results in a determinant formula. Then by lattice path counting theory,
we obtain the generating functions $F_r(x)$ for the number of lattice paths from $(0,0)$ to $(\ell n-r,kn)$ that never go above the path $(N^kE^{\ell})^{n-1}N^kE^{\ell-r}$, where $N,E$ stand for north and east steps, respectively. We also obtain exponential formulas for $F_1(x)$ and $F_\ell(x)$. The formula for $\overline{F}_m(x)$ is thus proved since it is just $F_1(x)$ specializes at $k=\ell=m$.
\end{abstract}

\noindent
\begin{small}
 \emph{Mathematic subject classification}: Primary 05A15; Secondary 05A10, 05E05.
\end{small}

\noindent
\begin{small}
\emph{Keywords}: Rectangular Young tableau; Lattice path; Puiseux's theorem; Symmetric function; Catalan number.
\end{small}

\section{Introduction}
Throughout this paper, $\mathbb{C}$, $\mathbb{Z}$, $\mathbb{N}$ and $\mathbb{P}$ denote the set of all complex numbers, all integers, non-negative integers and positive integers, respectively.

Recently, Banderier et. al. \cite{C.Banderier18,C.Banderier21} considered a variation of standard Young tableaux,
called \emph{Young tableaux with walls}. We give a rigorous definition for our exploration.
Let $\lambda$ be a partition of $n$, denoted $\lambda \vdash n$. Its Young diagram is also
denoted $\lambda$.
See, e.g., \cite[Section 1.7]{RP.Stanley} for detailed definitions. In this paper, we focus on $\lambda=(m,m)$, so the Young diagram of $\lambda$
is a $2\times m$ rectangle with $|\lambda|=2m$ cells. Edges between two neighboring cells of $\lambda$ will be referred to as \emph{walls}.
A \emph{Young building} $\mathcal{B}$ of shape $\lambda$ is a pair $\mathcal{B}=(\lambda,W)$ where $W$ is a subset of the walls of $\lambda$.
The walls in $W$ are depicted as bold red edges.
A Young tableau with walls over $\mathcal{B}$ is a filling of the cells of $\lambda$
by labels $1,2,\dots, n$, such that each label appears exactly once and the labels are increasing along each row and column,
(conditions for standard Young tableaux)
but two labels separated by a wall need not be increasing. Denote by $YT(\lambda,W)$ the set of all such tableaux.
It reduces to the set of standard Young tableaux of shape $\lambda$ when $W$ is empty.
See Figure \ref{YoungT1} for an example of Young tableaux with walls of shape $\lambda=(6,6)$.
\begin{figure}[htp]
\centering
\includegraphics[width=4cm,height=1.5cm]{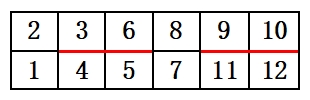}
\caption{A Young tableaux of shape $\lambda=(6,6)$ with walls.}
\label{YoungT1}
\end{figure}

Let $\mathcal{B}=(\lambda,W)$ be a rectangular building. That is, $\lambda$ is of rectangular shape.
Define $\mathcal{B}^n$ to be the Young building formed by connecting $n$ blocks of $\mathcal{B}$ horizontally. Then
Young tableaux over $\mathcal{B}^n$ are called \emph{Young tableaux with periodic walls}. Note that in this definition, $W$ is allowed to contain the rightmost edges of $\lambda$. Figure \ref{YoungT2} illustrates a Young building $\mathcal{B}_2$ on the left
and a Young tableau with walls in $YT(\mathcal{B}_2^3)$ on the right.

\begin{figure}[htp]
\centering
\includegraphics[width=9cm,height=2.5cm]{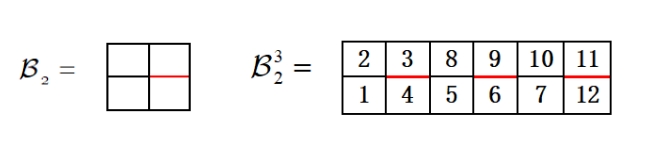}
\caption{A Young tableaux with periodic walls of a block $\mathcal{B}_2$ of shape $(2,2)$.}
\label{YoungT2}
\end{figure}

In \cite{C.Banderier21}, Banderier and Wallner counted Young tableaux with periodic walls by ``the density method".
They considered a classification of $2\times 2$ periodic shapes (a total of $2^6=64$ different models) and obtained counting formulas for $60$ models.
In particular, they
obtained the counting formula
$$\overline{f}_2(n):=|YT(\mathcal{B}_2^n)| =2^{2n+1}\texttt{Cat}(n)-\texttt{Cat}(2n+1),$$
where $\texttt{Cat}(n)$ is the well-known Catalan number with the Catalan generating function
\begin{equation}
C(x)=\sum_{n\geq 0}\texttt{Cat}(n)x^n= \frac{1-\sqrt{1-4x}}{2x}=\sum_{n\geq 0} \frac{1}{n+1}\binom{2n}{n} x^n.
\end{equation}
Note that $\overline{f}_2(n)$ is the sequence [A079489] in OEIS \cite{Sloane23}.

More generally, let $m$ be a positive integer and $\mathcal{B}_m$ be the Young building shape $\lambda=(m,m)$ with horizontal walls in all columns except the first column. See Figure \ref{YoungT3} (on the left) for two examples of elements in $YT(\mathcal{B}_3^2)$.
We are interested in the counting formula
of $\overline{f}_m(n)=|YT(\mathcal{B}_m^n)|$.

Computer experiment suggests that $\overline{f}_m(n)$ for $m=3,4,5,6$ are respectively the sequences [A213403], [A213404], [A213405] and [A213406] in OEIS \cite{Sloane23}. These sequences have nice generating functions, and can be summarized as the following result.
\begin{thm}\label{PeriodicWallGF}
Suppose $m$ is a positive integer.
Let $\overline{f}_m(n)$ denote the number of Young tableaux with periodic walls over $\mathcal{B}_m^n$, with convention $\overline{f}_m(0)=1$.
Then we have
$$\overline{F}_m(x)=\sum_{n\geq 0}\overline{f}_m(n)x^n=\prod_{k=1}^{m}C(\xi^{k} x^\frac{1}{m})=\exp \left(\sum_{n\geq 1}\binom{2mn-1}{mn-1}\frac{x^n}{n}\right),$$
where $\xi=e^{\frac{2\pi i}{m}}$ and $C(x)$ is the Catalan generating function.
\end{thm}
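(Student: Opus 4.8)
The plan is to follow the lattice-path route of the abstract: convert the tableaux into lattice paths confined to a periodic staircase region, count those paths by a determinant, and then extract and simplify the generating function. First I would set up the encoding. A tableau in $YT(\mathcal{B}_m^n)$ has shape $2\times mn$ and is filled with $1,\dots,2mn$; reading the labels in increasing order and recording for each whether it sits in the top or the bottom row yields its Yamanouchi (reading) word $w$, a word with $mn$ north letters and $mn$ east letters that we interpret as a monotone lattice path from the origin. Since each row increases automatically, $w$ determines the tableau, so this is a bijection. The row conditions impose nothing on $w$; the column condition at a wall-free column $c\equiv 1\pmod m$ says the $c$-th top entry precedes the $c$-th bottom entry, which constrains the relative order of the $c$-th north and $c$-th east steps, whereas a walled column imposes no such order. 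There are exactly $n$ wall-free columns, and assembling their constraints shows that $w$ is precisely a path that never rises above the staircase $(N^mE^m)^{n-1}N^mE^{m-1}$. Hence $\overline f_m(n)$ equals the number of such paths, i.e. $F_1(x)$ specialized at $k=\ell=m$. The point to verify carefully is that the $n$ individual order constraints combine into exactly this staircase and no stronger condition.

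Next, to count paths weakly below this staircase I would invoke the standard determinant for lattice paths below a fixed boundary (via iterated reflection, equivalently a Lindstr\"om--Gessel--Viennot determinant). Since the staircase has one corner per period, the count becomes a determinant of binomial coefficients of size equal to the number of corners, with entries indexed by the corner coordinates; this is the determinant formula advertised in the abstract, and obtaining it is routine once the corner pattern is fixed.

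The crux is passing from this family of determinants to a closed form for $\overline F_m(x)=\sum_{n\ge 0}\overline f_m(n)x^n$. Here I would exploit the algebraic nature of the problem: a kernel-method analysis of the one-period step of the staircase recursion shows that $\overline F_m$ is an algebraic function whose Puiseux expansion at $x=0$ lives in powers of $x^{1/m}$, with $m$ conjugate branches permuted by $x^{1/m}\mapsto\xi x^{1/m}$. Identifying these branches with the shifted Catalan series $C(\xi^k x^{1/m})$, and recognizing $\overline F_m$ as the symmetric norm-type product of all of them, gives $\overline F_m(x)=\prod_{k=1}^m C(\xi^k x^{1/m})$; this product is invariant under $x^{1/m}\mapsto\xi x^{1/m}$ and is therefore a genuine power series in $x$. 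I expect this identification of the branches with the Catalan series, together with the justification of the symmetric-function/Puiseux manipulation, to be the main obstacle; it is exactly the content of the exponential formula for $F_1$ promised in the abstract.

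Finally I would pass from the product to the exponential form by an elementary computation. Starting from $\log C(y)=\sum_{n\ge 1}\binom{2n-1}{n-1}\frac{y^n}{n}$ and the root-of-unity filter $\sum_{k=1}^m\xi^{kn}=m$ when $m\mid n$ and $0$ otherwise, one finds
$$\log\prod_{k=1}^{m} C(\xi^{k}x^{1/m})=\sum_{n\ge 1}\binom{2n-1}{n-1}\frac{x^{n/m}}{n}\sum_{k=1}^{m}\xi^{kn}=\sum_{n\ge 1}\binom{2mn-1}{mn-1}\frac{x^{n}}{n},$$
so that exponentiating yields the second expression in the theorem and completes the proof. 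As consistency checks, $m=1$ recovers $\overline F_1(x)=C(x)$ (standard $2\times n$ tableaux), and $m=2$ reproduces Banderier--Wallner's $\overline f_2(n)=2^{2n+1}\texttt{Cat}(n)-\texttt{Cat}(2n+1)$.
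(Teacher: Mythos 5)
Your proposal reproduces the paper's overall architecture (Yamanouchi encoding of the tableaux as paths weakly below the staircase $(N^mE^m)^{n-1}N^mE^{m-1}$, identification of $\overline F_m$ with $F_1$ at $k=\ell=m$, then product and exponential forms), but it has a genuine gap exactly at the crux, and you half-admit this yourself. The step ``recognizing $\overline F_m$ as the symmetric norm-type product of all of them'' is an assertion, not an argument. First, a conflation: $\overline F_m(x)$ is a single power series in integer powers of $x$; it is the roots $w_1,\dots,w_m$ of the kernel $(w-1)^m-xw^{2m}=0$ that form the $m$ conjugate fractional-power-series branches (and one checks easily that these are $C(\xi^j x^{1/m})$, since $C(y)-1=yC(y)^2$). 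Second, and more seriously, invariance under $x^{1/m}\mapsto\xi x^{1/m}$ only tells you that a symmetric expression in the branches is a genuine power series in $x$; it does not tell you \emph{which} symmetric function of the branches the counting series is. Indeed, in the paper the answer for general $r$ is $F_r=\sum_{i=0}^{r-1}(-1)^i\binom{\ell-r+i}{i}e_{\ell-r+1+i}(w_1,\dots,w_\ell)$, and for instance $Q(x)=F_\ell$ is $\frac{-1}{x}(1-w_1)\cdots(1-w_\ell)$, not a norm; so ``symmetric, hence the product of the branches'' is not valid reasoning. Establishing $F_1=w_1\cdots w_\ell=e_\ell(w_1,\dots,w_\ell)$ is precisely the content of Theorem \ref{MainResinIntrou} (via Corollary \ref{CaseF1Thm}), and the paper spends all of Section 3 on it: the de Mier--Noy Tutte-polynomial recursion yielding the linear system \eqref{system-equat}, the kernel method, Lagrange interpolation, and a sequence of symmetric-function and binomial identities. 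Your plan contains no substitute for this step, so the proof does not go through as written.

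What you do have is correct and worth noting. The encoding of step one matches the paper's Section 2 (Lemma \ref{LemYminYmax} and Proposition \ref{TableauxLatticePartit} carry out the verification you flag, namely that the wall-free column constraints combine into exactly the staircase condition), and the determinant is indeed a side remark not needed for the theorem. More interestingly, your final step is a genuine simplification of the paper's route from the product formula to the exponential formula: the identity $\log C(y)=\sum_{n\ge1}\binom{2n-1}{n-1}\frac{y^n}{n}$ is correct (Lagrange inversion), and the root-of-unity filter then gives $\prod_{k=1}^m C(\xi^k x^{1/m})=\exp\bigl(\sum_{n\ge1}\binom{2mn-1}{mn-1}\frac{x^n}{n}\bigr)$ directly, whereas the paper derives the exponential form for general $k,\ell$ by a constant-term/logarithm argument on the kernel factorization (Theorem \ref{Mainresult22}), which is more work but yields the more general Theorems \ref{QXinPhdIntrod} and \ref{FF1ExpIntrod}. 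So the two outer layers of your plan are sound; the missing middle layer --- proving that the path generating function equals the product of the small kernel roots --- is the theorem's real content and must be supplied, either by solving the functional system as the paper does or by some independent argument (e.g.\ the cycle-lemma style results cited in \cite{RJChapman,JIrving09}).
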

The motivation of this paper is to give a proof of this theorem.

To this end, we find several combinatorial interpretations of $\overline{f}_m(n)$, including a lattice path interpretation.
This allows us to prove Theorem \ref{PeriodicWallGF} using lattice path counting theory (e.g., \cite{UTamm02,RJChapman,JIrving09}). Indeed, our first result is the following.
\begin{thm}\label{MainResinIntrou}
Let $k,\ell\in\mathbb{P}$, $n\in \mathbb{N}$ and $1\leq r\leq \ell$. Let $f_r(n)$ be the number of lattice paths from $(0,0)$ to $(\ell n-r,kn)$ that never go above the path $(N^kE^{\ell})^{n-1}N^kE^{\ell-r}$, and let $w_1,...,w_{\ell}$ be the unique solutions of the equation $(w-1)^{\ell}-xw^{k+\ell}=0$ that are fractional power series.
Then the generating function $F_r(x)=\sum_{n\geq 0}f_r(n)x^n$ (with the convention $f_r(0)=1$) is given by
$$F_r(x)=\sum_{i=0}^{r-1}(-1)^i\binom{\ell-r+i}{i}e_{\ell-r+1+i}(w_1,w_2,...,w_{\ell}),$$
where $e_m(w_1,...,w_{\ell})$ is the $m$-th elementary symmetric function introduced in subsection 3.1.
\end{thm}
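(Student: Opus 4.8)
The plan is to prove Theorem \ref{MainResinIntrou} by the kernel method applied to paths staying weakly below the periodic staircase, in the spirit of the lattice-path results of Tamm, Chapman, and Irving cited above. The first step is to linearize the staircase constraint. Reading a path as a word in the steps $N$ and $E$, I track the signed height $\phi = \ell y - k x$ of the current endpoint $(x,y)$: an $N$ step increments $\phi$ by $\ell$, an $E$ step decrements it by $k$, the walk starts at $\phi=0$, and the target $(\ell n - r, kn)$ corresponds to $\phi = kr$. Under this substitution the boundary $(N^kE^\ell)^{n-1}N^kE^{\ell-r}$ becomes a block-periodic barrier rather than a flat one, so I must also record the residue of the horizontal coordinate modulo $\ell$. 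The relevant object is thus a one-dimensional walk in a periodically-constrained strip, with a single variable $x$ marking completed blocks.

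Next I set up a functional equation for the bivariate generating function $F(x;u)=\sum x^{(\#\text{blocks})}u^{\phi}$, where $u$ is the catalytic variable conjugate to $\phi$ and the barrier is enforced by the usual ``subtract the forbidden contributions'' bookkeeping. Clearing denominators puts the equation in the form $K(x,u)\,F(x;u)=(\text{boundary terms})$, and after a rational change of variable $u\mapsto w$ the vanishing of the kernel $K$ is governed by the polynomial equation $(w-1)^{\ell}-x\,w^{k+\ell}=0$. By Puiseux's theorem this degree-$(k+\ell)$ polynomial has exactly $\ell$ roots $w_1,\dots,w_\ell$ that are fractional power series in $x^{1/\ell}$ (tending to $1$ as $x\to 0$) together with $k$ roots blowing up like $x^{-1/k}$; the $\ell$ small roots are precisely those named in the statement, and they are exactly the admissible substitutions that keep $F$ a genuine power series. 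Since these $\ell$ branches are conjugate under $x^{1/\ell}\mapsto e^{2\pi i/\ell}x^{1/\ell}$, their symmetric functions $e_m(w_1,\dots,w_\ell)$ are honest power series in $x$.

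The heart of the argument is the root substitution followed by linear algebra. Substituting the $\ell$ small roots into $K(x,u)\,F(x;u)=(\text{boundary terms})$ annihilates the left-hand side and yields $\ell$ linear relations among the unknown boundary generating functions; solving this system and applying Vieta's formulas expresses every boundary term, and hence $F_r(x)$, as a linear combination of the elementary symmetric functions $e_m(w_1,\dots,w_\ell)$. The endpoint enters only through the exponent $r$, which selects the boundary term to be read off; carrying out the extraction produces the coefficients $\binom{\ell-r+i}{i}$ through the expansion $\sum_{i\ge 0}\binom{\ell-r+i}{i}z^{i}=(1-z)^{-(\ell-r+1)}$, and the sum truncates at $i=r-1$ automatically because $e_m(w_1,\dots,w_\ell)=0$ once $m>\ell$. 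This reproduces the claimed formula; as consistency checks, $r=1$ collapses to $F_1=e_\ell(w_1,\dots,w_\ell)=\prod_i w_i$ and $r=\ell$ collapses to $F_\ell=1-\prod_i(1-w_i)$, both agreeing with the Catalan case $k=\ell=1$, where $w_1=C(x)$.

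I expect the main obstacle to be the correct setup of the functional equation for the periodically-constrained walk. Unlike the classical single-barrier kernel method, the staircase barrier oscillates within each period, so one must choose the catalytic variable carefully, quite possibly working with a vector of generating functions indexed by the residue of $x$ modulo $\ell$, in order to close the equation and then verify that precisely the $\ell$ Puiseux roots are the legitimate substitutions. Once the kernel and its small roots are secured, the remaining passage to elementary symmetric functions and the identification of the binomial coefficients is a bounded symmetric-function computation rather than a conceptual difficulty.
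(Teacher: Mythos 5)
Your high-level strategy coincides with the paper's: both arrive at the kernel $(w-1)^{\ell}-xw^{k+\ell}$, substitute its $\ell$ fractional-power-series roots to annihilate the kernel, and then solve the resulting $\ell\times\ell$ linear system for boundary unknowns, of which $F_r(x)=xC_{\ell-r}+1$ is one. But as written, your proposal leaves both substantive steps of this program unproved. First, you never actually construct the functional equation. The paper does not use your catalytic-variable-on-heights setup; it invokes the Tutte polynomial recursion of Bonin--de Mier--Noy (Lemma \ref{TuttePloy-NE}), appending the $k$ north and $\ell$ east steps of one period step by step via the operator $\Phi$, which is exactly what introduces the $\ell$ unknowns $C_0,\dots,C_{\ell-1}$ and produces Equation \eqref{w1w2w3-wl} with the desired kernel. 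You yourself flag that closing the equation for a staircase barrier --- which oscillates inside each period and whose endpoint lies off the bounding line, ``quite possibly working with a vector of generating functions indexed by the residue'' --- is the main obstacle; but that obstacle \emph{is} the content of this step, so in your write-up the kernel, the legitimacy of substituting only the $\ell$ small roots, and the linear system itself all remain conjectural.

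Second, and more seriously, the passage from the linear system to the stated formula is asserted rather than derived. ``Solving this system and applying Vieta's formulas'' is where essentially all of the paper's work lies: Lagrange interpolation at the nodes $X_j=w_j/(w_j-1)$, extraction of the coefficient of $X^{r-1}$, conversion of the resulting alternants into Schur/Vandermonde form (Lemmas \ref{SchurDeterm}, \ref{W-determ-r}, \ref{HESlambda}), a plethystic expansion in the variables $\overline{w}_i=(w_i-1)/w_i$, three binomial identities (Lemmas \ref{CTbinomTT1}, \ref{CTbinomTT3}, \ref{CTbinomTT2}), and the nontrivial cancellation \eqref{SUM=0} that removes every Schur-function term. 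Your one concrete mechanism --- that the coefficients $\binom{\ell-r+i}{i}$ arise from expanding $(1-z)^{-(\ell-r+1)}$ --- is not tied to any identified quantity in the computation; in the paper these coefficients emerge from Lemma \ref{CTbinomTT1} after a double-sum interchange, not from that negative-binomial expansion. The only point you justify completely, the truncation at $i=r-1$ because $e_m(w_1,\dots,w_\ell)=0$ for $m>\ell$, is the trivial part. Your consistency checks at $r=1$ and $r=\ell$ are correct (they match Corollary \ref{CaseF1Thm} and the example closing Section 5), but they verify the statement, not your argument. In short: same skeleton, same kernel, same roots as the paper, but both load-bearing steps are missing.
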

Specifically, when $r=\ell$, this result was obtained by de Mier and Noy (\cite{AnnadeMier}) in 2005.

For the two cases of $r=\ell$ and $r=1$, we obtain the following exponential formulas.
\begin{thm}\label{QXinPhdIntrod}
Let $q(n)$ be the number of lattice paths from $(0,0)$ to $(\ell n,kn)$ that never go above the path $(N^kE^{\ell})^n$. Let $Q(x)=\sum_{n\geq 0}q(n)x^n$. Then we have
$$Q(x)=\exp \left(\sum_{n\geq 1}\binom{kn+\ell n}{\ell n}\frac{x^n}{n}\right).$$
\end{thm}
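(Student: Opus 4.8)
The plan is to reduce the statement to the $r=\ell$ case of Theorem \ref{MainResinIntrou} (the de Mier--Noy formula) by an elementary bijection, and then to extract the exponential form by Lagrange inversion. First I would relate $Q(x)$ to $F_\ell(x)$ via the claim $q(n)=f_\ell(n+1)$. A path counted by $f_\ell(n+1)$ runs from $(0,0)$ to $(\ell n, k(n+1))$ below $(N^kE^\ell)^nN^k$; since the bounding path has no east step beyond $x=\ell n$, once such a path first reaches the line $x=\ell n$ (necessarily at some height $h\le kn$) its remaining steps are forced to be north. Hence the path is determined by its initial segment up to that first visit, and the same initial data describes a path counted by $q(n)$, the only difference being the length of the forced terminal vertical run. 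This matching gives $q(n)=f_\ell(n+1)$ and therefore
\[
Q(x)=\sum_{n\ge 0}f_\ell(n+1)x^n=\frac{F_\ell(x)-1}{x}.
\]

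Next I would insert the closed form from Theorem \ref{MainResinIntrou}. Specializing $r=\ell$ gives $\binom{\ell-r+i}{i}=1$ and $e_{\ell-r+1+i}=e_{1+i}$, so that $F_\ell(x)=\sum_{m=1}^{\ell}(-1)^{m-1}e_m(w_1,\dots,w_\ell)=1-\prod_{i=1}^{\ell}(1-w_i)$. Combined with the previous display this yields $Q(x)=-\tfrac1x\prod_{i=1}^{\ell}(1-w_i)$. Using the defining relations $(w_i-1)^\ell=x\,w_i^{k+\ell}$ for the $\ell$ Puiseux solutions, I would raise $\prod_i(w_i-1)$ to the $\ell$-th power, obtaining $\big(\prod_i(w_i-1)\big)^\ell=x^\ell\big(\prod_i w_i\big)^{k+\ell}$, and, choosing the branch that is a genuine power series in $x$ (each $w_i-1\sim \zeta^i x^{1/\ell}$ as $x\to 0$, where $\zeta=e^{2\pi i/\ell}$), conclude the clean identity
\[
Q(x)=\Big(\prod_{i=1}^{\ell}w_i\Big)^{(k+\ell)/\ell},\qquad\text{hence}\qquad \log Q(x)=\frac{k+\ell}{\ell}\sum_{i=1}^{\ell}\log w_i.
\]

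Finally I would extract coefficients. Writing $w_i=1+u_i$, the branch equations read $u_i=\zeta^i x^{1/\ell}(1+u_i)^{(k+\ell)/\ell}$, so Lagrange inversion applied to $\log(1+u)$ on each branch, together with the root-of-unity filter $\sum_i\zeta^{im}=\ell\,[\,\ell\mid m\,]$, gives $[x^n]\sum_{i}\log w_i=\tfrac1n\binom{(k+\ell)n-1}{\ell n-1}$. Multiplying by $(k+\ell)/\ell$ and using the pivotal simplification $\binom{(k+\ell)n}{\ell n}=\tfrac{k+\ell}{\ell}\binom{(k+\ell)n-1}{\ell n-1}$ turns this into $\log Q(x)=\sum_{n\ge1}\binom{(k+\ell)n}{\ell n}\tfrac{x^n}{n}$, which is the claim.

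The step I expect to be the main obstacle is the coefficient extraction from the multivalued product: justifying that $\big(\prod_i w_i\big)^{(k+\ell)/\ell}$ is a single-valued power series in $x$, that the $\ell$ Puiseux branches are correctly indexed by the $\zeta^i$, and that Lagrange inversion may be applied branch-by-branch and then summed. An equivalent but equally delicate route is to differentiate directly: from $(w_i-1)^\ell=x\,w_i^{k+\ell}$ one finds $x\,\tfrac{d}{dx}\log Q=\tfrac{k+\ell}{\ell}\sum_i\tfrac{w_i-1}{(k+\ell)-kw_i}$, after which a residue evaluation of this symmetric rational function of the roots identifies the coefficients $\binom{(k+\ell)n}{\ell n}$; either way the symmetric-function and residue bookkeeping over the roots is the crux.
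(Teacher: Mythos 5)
Your proposal is correct, and after a shared first step it follows a genuinely different route from the paper. Like the paper, you reduce to the $r=\ell$ case of Theorem \ref{MainResinIntrou}: your bijection $q(n)=f_\ell(n+1)$ (delete or append the forced terminal $N^k$ run) is exactly the paper's identity $Q(x)=C_0$, $F_\ell(x)=xC_0+1$, and both arguments pass through de Mier--Noy's product formula $Q(x)=-\tfrac{1}{x}(1-w_1)\cdots(1-w_\ell)$ (Corollary \ref{Annader-Mie}). The divergence is in how the exponential form is extracted. The paper (proof of Theorem \ref{QQXXinPhd}) sets $u=w-1$, factors the kernel over \emph{all} $k+\ell$ roots as $1-x(1+u)^{k+\ell}/u^{\ell}=A(x)\prod_{i\le \ell}(1-u_i/u)\prod_{j>\ell}(1-u/u_j)$, identifies $Q=A^{-1}$ by comparing lowest-order terms, and then reads off $\ln A^{-1}$ as $\CT_u \ln\bigl(1-x(1+u)^{k+\ell}/u^{\ell}\bigr)^{-1}$, since the remaining factors contribute only strictly negative or strictly positive powers of $u$; the binomial sum falls out of the constant-term computation at once. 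You instead use only the $\ell$ power-series roots: multiplying the defining equations $(w_i-1)^{\ell}=xw_i^{k+\ell}$ gives $Q(x)^{\ell}=\bigl(\prod_i w_i\bigr)^{k+\ell}$, and then Lagrange inversion on each branch $u_i=\zeta^i x^{1/\ell}(1+u_i)^{(k+\ell)/\ell}$ combined with the root-of-unity filter evaluates $\sum_i\log w_i$. The two delicate points you flag are both fillable by standard arguments: single-valuedness never needs to enter if you phrase everything with formal logarithms (both $Q$ and $P=\prod_i w_i$ are genuine power series with constant term $1$, so $Q^{\ell}=P^{k+\ell}$ forces $\ell\log Q=(k+\ell)\log P$), and the branch indexing follows because, for each $\ell$-th root of unity $\omega$, the equation $u=\omega x^{1/\ell}(1+u)^{(k+\ell)/\ell}$ has a unique fractional-power-series solution vanishing at $x=0$, and these $\ell$ distinct solutions must be exactly $u_1,\dots,u_\ell$. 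As for what each approach buys: the paper's constant-term method avoids all Puiseux-branch bookkeeping and fractional exponents, and yields the power-sum byproduct Proposition \ref{PropPPM} for free; your method is rewarded with the clean intermediate identity $Q(x)=F_1(x)^{(k+\ell)/\ell}$ (since $\prod_i w_i=F_1(x)$ by Corollary \ref{CaseF1Thm}), which exhibits Theorems \ref{QXinPhdIntrod} and \ref{FF1ExpIntrod} as equivalent to each other via the absorption identity $\binom{(k+\ell)n}{\ell n}=\tfrac{k+\ell}{\ell}\binom{(k+\ell)n-1}{\ell n-1}$ --- a connection the paper does not make explicit.
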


\begin{thm}\label{FF1ExpIntrod}
Following the notation in Theorem \ref{MainResinIntrou}. Let $f_1(n)$ be the number of lattice paths from $(0,0)$ to $(\ell n-1,kn)$ that never go above the path $(N^kE^{\ell})^{n-1}N^kE^{\ell-1}$. Let $F_1(x)=\sum_{n\geq 0}f_1(n)x^n$. Then we have
$$F_1(x)=\exp \left(\sum_{n\geq 1}\binom{kn+\ell n-1}{\ell n-1}\frac{x^n}{n}\right).$$
\end{thm}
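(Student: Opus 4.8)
The plan is to specialize Theorem~\ref{MainResinIntrou} at $r=1$ and then evaluate the resulting symmetric function by a single residue computation. Putting $r=1$ collapses the sum to its $i=0$ term, so $F_1(x)=\binom{\ell-1}{0}e_\ell(w_1,\dots,w_\ell)=\prod_{j=1}^\ell w_j$, the product of the $\ell$ fractional-power-series roots of $P(w):=(w-1)^\ell-xw^{k+\ell}$. Writing $G(x)=\log F_1(x)=\sum_{j=1}^\ell\log w_j$, the claim is equivalent to $[x^n]G=\tfrac1n\binom{kn+\ell n-1}{\ell n-1}$ for every $n\ge1$; since $G(0)=0$ it suffices to identify the series $G'(x)=\sum_j w_j'/w_j$ and read off $n\,[x^n]G=[x^{n-1}]G'$.

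First I would use the defining relation in the inverted form $x=\phi(w)$ with $\phi(w)=(w-1)^\ell/w^{k+\ell}$. Differentiating the identity $x=\phi(w_j(x))$ gives $w_j'=1/\phi'(w_j)$, so $G'(x)=\sum_j \frac{1}{w_j\,\phi'(w_j)}$. The key step is to recognize this sum over the $\ell$ branches as one contour integral: the function $\frac{1/w}{\phi(w)-x}$ has simple poles precisely at the $w_j$, each with residue $\frac{1}{w_j\phi'(w_j)}$, so by the residue theorem $G'(x)=\frac{1}{2\pi i}\oint_\Gamma\frac{dw}{w(\phi(w)-x)}$, where $\Gamma$ is a fixed small circle about $w=1$ that, for $|x|$ small, encloses exactly the $\ell$ roots tending to $1$ (the fractional-power-series roots) and none of the remaining $k$ roots of $P$. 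Since $\phi(w)-x=P(w)/w^{k+\ell}$, the integrand is simply $w^{k+\ell-1}/P(w)$.

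Then I would extract coefficients in $x$. Expanding $\frac{1}{P(w)}=\sum_{m\ge0}x^m\frac{w^{(k+\ell)m}}{(w-1)^{\ell(m+1)}}$, which converges uniformly on $\Gamma$ for small $|x|$ and hence may be integrated term by term, gives $[x^{n-1}]\frac{w^{k+\ell-1}}{P(w)}=\frac{w^{(k+\ell)n-1}}{(w-1)^{\ell n}}$. Therefore $n\,[x^n]G=\frac{1}{2\pi i}\oint_\Gamma\frac{w^{(k+\ell)n-1}}{(w-1)^{\ell n}}\,dw=\operatorname{Res}_{w=1}\frac{w^{(k+\ell)n-1}}{(w-1)^{\ell n}}$, and this residue is the coefficient of $(w-1)^{\ell n-1}$ in $(1+(w-1))^{(k+\ell)n-1}$, namely $\binom{(k+\ell)n-1}{\ell n-1}$. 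Thus $G(x)=\sum_{n\ge1}\binom{kn+\ell n-1}{\ell n-1}\frac{x^n}{n}$, and exponentiating yields the stated formula for $F_1(x)$.

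The main obstacle is the analytic bookkeeping that makes the passage from the finite sum over branches to a closed contour (and back) legitimate: one must verify, using Puiseux's theorem and the order-$\ell$ branching of $\phi$ at $w=1$, that for small $x\ne0$ exactly the $\ell$ fractional-power-series roots sit inside a fixed $\Gamma$ while the other $k$ roots stay outside, that $\log w$ admits a single-valued branch on the disk bounded by $\Gamma$, and that the $x$-expansion of $1/P$ converges uniformly on $\Gamma$. Once these are in place the residue evaluation is routine. I would also note that this sits alongside Theorem~\ref{QXinPhdIntrod} as an $r=1$ analogue: both exponential formulas arise by turning a symmetric function of the Puiseux roots into a residue at $w=1$, the binomial coefficient being exactly the residue of a power of $w$ against a power of $(w-1)$.
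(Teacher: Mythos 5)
Your proof is correct, but it takes a genuinely different route from the paper's. Both arguments start from the same input --- Theorem \ref{MainResinIntrou} at $r=1$, i.e.\ $F_1(x)=e_\ell(w_1,\dots,w_\ell)=w_1\cdots w_\ell$ (Corollary \ref{CaseF1Thm}) --- but the paper never differentiates and never leaves the formal setting: it uses the full factorization $(w-1)^\ell-xw^{k+\ell}=B(x)\prod_{i=1}^{\ell}(w-w_i)\prod_{j=\ell+1}^{\ell+k}\bigl(1-w/w_j\bigr)$ with $B(x)=(-1)^{k+1}x\,w_{\ell+1}\cdots w_{\ell+k}$, identifies $F_1(x)=B(x)^{-1}$ by comparing coefficients, takes logarithms of the factorized identity, and reads off $\ln B(x)^{-1}$ as the constant term $\CT_w \ln\bigl(1-xw^{-k}(1-w)^{-\ell}\bigr)^{-1}$, because the log-terms coming from the small roots contain only positive powers of $w$ and those from the large roots only negative powers; the binomial coefficient then appears as $\CT_w\, w^{-kn}(1-w)^{-\ell n}=\binom{kn+\ell n-1}{kn}$. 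You instead differentiate $\log F_1=\sum_j\log w_j$, convert the sum over the $\ell$ small branches into the single contour integral $\frac{1}{2\pi i}\oint_\Gamma w^{k+\ell-1}P(w)^{-1}\,dw$ via the residue theorem, and obtain the binomial coefficient as $\operatorname{Res}_{w=1} w^{(k+\ell)n-1}(w-1)^{-\ell n}$; the large roots and the normalizing factor $B(x)$ never enter except through the requirement that they stay outside $\Gamma$. What each approach buys: the paper's constant-term method is purely formal --- the sign-separation of powers of $w$ in $\mathbb{C}((u))^{\mathrm{fra}}((x))$ replaces all analytic estimates --- and the same expansion yields the power-sum byproducts (Proposition \ref{PropPPM}); your method is algebraically leaner and localizes everything in one residue at $w=1$, but the cost is exactly the analytic bookkeeping you flag, to which you should add one item: the residue-theorem step needs the roots inside $\Gamma$ to be simple, which does hold for all sufficiently small $x\neq0$ since a common root of $P$ and $\partial_w P$ forces $w=(k+\ell)/k$, so the discriminant in $w$ vanishes at only finitely many $x$, and the resulting identity of analytic functions on a punctured disk then extends to the desired power-series identity.
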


The paper is organized as follows.
In Section 2,
we consider Young tableaux with horizontal walls over a Young building $\mathcal{B}$, especially for $\mathcal{B}$ of shape $(m,m)$, a two-rowed partition.
By using the Yamanouchi word \cite[Propsition 7.10.3(d)]{RP.Stanley99}, such Young tableaux are encoded by
$y$-sequences, lattice paths, and reverse partitions. This allows us to give a determinant formula for
$YT(\mathcal{B})$ using lattice path counting theory. In particular, $\overline{f}_m(n)$ has a simple determinant formula in Proposition \ref{DeterFormuFM}.
Section 3 is devoted to the proof of our main result, Theorem \ref{MainResinIntrou}. In subsection 3.1 we introduce some basic knowledge about symmetric functions
and combinatorial sum identities that will be used in our proof. In subsection 3.2, we first introduce the functional equation system in \cite{AnnadeMier} obtained from Matroid Theory, then solve the system and complete the proof of Theorem \ref{MainResinIntrou}.
Section 4 focuses on the cases $r=1$ and $r=\ell$.  We prove Theorems \ref{QXinPhdIntrod} and \ref{FF1ExpIntrod} on their exponential formulas,
and complete the proof
of Theorem \ref{PeriodicWallGF}.
In Section 5, we discuss two other possible approaches to Theorem \ref{MainResinIntrou}. This leads to two byproducts in Theorem \ref{ThmFReccParit} and Theorem \ref{SFdetermidentify}.

\section{Rectangular Young Tableaux with Horizontal Walls}\label{YTWallsToPath}
Let $\mathcal{B}=(\lambda,W)$ be a rectangular Young building with horizontal walls. That is, $\lambda=(m,m,\dots, m)$, and
$W$ contains only horizontal walls. We use the Yamanouchi word \cite[Propsition 7.10.3(d)]{RP.Stanley99} to
transform Young tableaux with walls over $\mathcal{B}$ into several combinatorial objects, including lattice paths.
Though most of the ideas work for general $\lambda$, we focus on the two-row case, i.e., $\lambda=(m,m)$,
for its simplicity and its close relation with $f_m(n)$ in Theorem \ref{MainResinIntrou}.

For a  Young tableau $T$ with walls over $\mathcal{B}$ depicted in Figure \ref{YoungT4}, its labels satisfy
$x_1<x_2<\cdots$ and $y_1<y_2<\cdots$. Note that we did not mark walls in Figure \ref{YoungT4}.
Since $\mathcal{B}$ contains only horizontal walls, we may encode
$W$ as a subset $S$ of $[m]:=\{1,\dots,m\}$. Thus we have the extra conditions $x_i<y_i$ for all $i\not\in S$.
\begin{figure}[ht]
$\begin{array}{|c|c|c|c|c|}\hline
y_1 & y_2 & y_3 & \cdots & y_m \\ \hline
x_1 & x_2 & x_3 & \cdots & x_m \\ \hline
\end{array}$
\caption{\label{YoungT4}}
\end{figure}

Now we introduce several different encodings of $T\in YT((m,m),S)$.

\begin{enumerate}
\item The \emph{$y$-sequence} of $T$ is $y(T)=(y_1,y_2,\dots, y_m)$. The \emph{$x$-sequence} of $T$ is $x(T)=(x_1,x_2,\dots, x_m)$. Then $\{x_1,\dots,x_m\}=[2m]\setminus \{y_1,\dots, y_m\}$.

 \item The \emph{Yamanouchi word} of $T\in YT(\lambda,S)$ is defined by
$w(T)=w_1w_2\cdots w_{2m}$ with $w_i=\chi(i\ \text{in first row})$
where $\chi(true)=1$ and $\chi(false)=0$.

\item The lattice path of $T$ is defined by $p(T)= w(T)|_{0\to N, 1\to E}$, which
is the $NE$-sequence of a lattice path, with $N=(0,1)$ and $E=(1,0)$ standing for north step and east step, respectively.

\item The \emph{reverse partition} of $T$ is defined to be the partition $\mu(T)=(\mu_1,\mu_{2},\dots, \mu_m)$ lying above $p(T)$.
More precisely, $\mu_i\geq 0$ is the number of cells in the $(m-i+1)$-th row and to the left of $p(T)$. In some contexts, it is also referred to as the co-area sequence.
\end{enumerate}

For example, if $T$ is the figure in the bottom left corner of Figure \ref{YoungT3} with $S=\{2,3,5,6\}$, then the Yamanouchi word of $T$ is $011100011100$;
the $y$-sequence of $T$ is $y(T)=(2,3,4,8,9,10)$; the $x$-sequence of $T$ is $x(T)=(1,5,6,7,11,12)$; the path of $T$ is $p(T)=NE^3N^3E^3N^2$; and the reverse partition of $T$ is $\mu(T)=(0,3,3,3,6,6)$.
It should be evident that $\mu_i=x_i-i$.
\begin{figure}[htp]
\centering
\includegraphics[width=12cm,height=5cm]{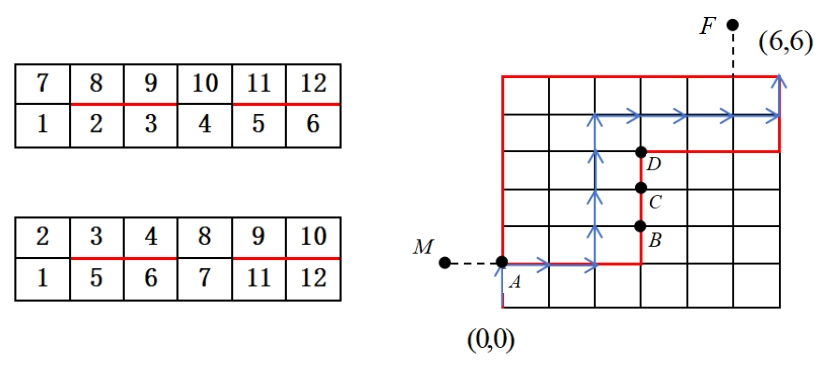}
\caption{An example of bijection for $\mathcal{B}_3^2$ in Proposition \ref{DeterFormuFM}.}
\label{YoungT3}
\end{figure}

The following lemma is immediate.
\begin{lem}\label{ComponentwisePath}
For any $T, T'$ in $YT((m,m),S)$ as above, the following are equivalent.
\begin{enumerate}
  \item $\mu(T)\leq \mu(T')$ componentwise, i.e.,
$\mu_i\leq \mu_i'$ for all $i$ if $\mu(T)=(\mu_1,\mu_2,\dots, \mu_m)$ and $\mu(T')=(\mu'_1,\mu'_2,\dots, \mu'_m)$.

  \item The path $p(T)$ lies above the path $p(T')$;

  \item The $y$-sequence $y(T)$ is larger than the $y$-sequence $y(T^{\prime})$ componentwise. We also denote $y(T)\leq y(T^{\prime})$
\end{enumerate}
We denote by $T\leq T'$ if one of the above conditions holds (hence all the conditions hold) true.
\end{lem}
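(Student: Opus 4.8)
The plan is to route all three conditions through the geometry of the monotone lattice path $p(T)$, which runs from $(0,0)$ to $(m,m)$ using exactly $m$ north steps (one for each $x$-value, since $w_i=0$ precisely when $i$ lies in the second row) and $m$ east steps (one for each $y$-value). The two facts I would isolate first are: the $i$-th north step of $p(T)$ is traversed at horizontal coordinate $\mu_i=x_i-i$, and the $i$-th east step of $p(T)$ is traversed at height $\nu_i:=y_i-i$. The first is exactly the already-recorded identity $\mu_i=x_i-i$; the second is its mirror image, and once both are in hand the lemma is a formal consequence.

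To prove these two facts I would count prefixes of the Yamanouchi word $w(T)=w_1\cdots w_{2m}$, using that $\{x_1<\cdots<x_m\}$ and $\{y_1<\cdots<y_m\}$ partition $[2m]$. The $i$-th north step occupies letter $w_{x_i}$, and among the preceding letters $w_1,\dots,w_{x_i-1}$ there are $i-1$ north steps (the values $x_1,\dots,x_{i-1}$), hence $(x_i-1)-(i-1)=x_i-i$ east steps; thus the path sits at horizontal coordinate $x_i-i=\mu_i$ while making this step. Symmetrically, the $i$-th east step occupies letter $w_{y_i}$, preceded by $i-1$ east steps and therefore by $(y_i-1)-(i-1)=y_i-i$ north steps, so it is made at height $y_i-i=\nu_i$.

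The equivalences then follow at once. For a pair of monotone $NE$-paths sharing the endpoints $(0,0)$ and $(m,m)$, the statement ``$p(T)$ lies weakly above $p(T')$'' is equivalent to ``the $i$-th north step of $p(T)$ is weakly to the left of the $i$-th north step of $p(T')$ for every $i$'', which by the first fact reads $\mu_i\le\mu_i'$ for all $i$; this is $(1)\Leftrightarrow(2)$. Reading the same ``weakly above'' relation off the east steps instead, it is equivalent to ``the $i$-th east step of $p(T)$ is weakly higher than that of $p(T')$ for every $i$'', i.e. $\nu_i\ge\nu_i'$, i.e. $y_i\ge y_i'$ for all $i$; this is $(2)\Leftrightarrow(3)$, matching the paper's convention of writing $y(T)\le y(T')$ for the componentwise inequality $y_i\ge y_i'$.

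There is essentially no hard step here, which is why the lemma is stated as immediate. The only point needing care is the standard characterization of the dominance order on monotone lattice paths with fixed endpoints — that ``weakly above'' can be read equivalently from the horizontal positions of the north steps or from the heights of the east steps — together with keeping the reversed inequality directions straight (smaller $\mu$, smaller $x$, but larger $y$, all corresponding to a higher path). I would state that characterization explicitly once and then invoke it for both $(1)\Leftrightarrow(2)$ and $(2)\Leftrightarrow(3)$.
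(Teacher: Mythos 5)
Your proof is correct, and it fills in exactly the verification the paper declines to give (the paper states ``The proof of the lemma is straightforward and we omit it''). Your route --- the identity $\mu_i = x_i - i$ for the horizontal positions of north steps, its mirror $y_i - i$ for the heights of east steps, and the standard two-sided reading of ``weakly above'' for monotone paths with common endpoints --- is the natural intended argument, and you correctly handle the paper's reversed convention that $y(T)\le y(T')$ denotes the componentwise inequality $y_i \ge y_i'$.
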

The proof of the lemma is straightforward and we omit it.
Moreover, $\leq$ is a partial order on $YT((m,m),[m])$ and hence on its restriction $YT((m,m),S)$ for all $S$.
Indeed, it is the partial order induced by the containment partial order on their reverse partitions.

Two extreme cases of $S$ are worth mentioning: i) When $S$ is empty, $T\in YT((m,m),\emptyset)$ is a standard Young tableau, and its (classical) Yamanouchi word $w(T)$ corresponds to a \emph{Catalan path}, i.e., a lattice path that never
goes below the diagonal. The Yamanouchi map $\varphi: T\mapsto p(T)$ establishes a bijection from standard Young tableaux of shape $(m,m)$ to
Catalan paths of size $m$. ii) When $S=[m]$, one can verify that the map $\varphi$ establishes a bijection from $YT((m,m),[m])$ to the set of all lattice paths from
$(0,0)$ to $(m,m)$. These two cases invite us to find a good interpretation of the image of  $YT((m,m),S)$ under $\varphi$ for arbitrary
subset $S$.

To this end, we need the following lemma.
\begin{lem}\label{LemYminYmax}
Suppose $S$ is a subset of $[m]$. Then any $T\in YT((m,m),S)$ satisfies
$y_{\min}\leq y(T) \leq y_{\max}(S)$, where $y_{\min}=(m+1,m+2,\dots, 2m)$
and $y_{\max}(S)=(Y_1,\dots, Y_m)$ is determined by the following rule:
If $i\not\in S$ then $Y_i=2i$; otherwise $i\in S$ and $Y_i=i_0+i$ where $i_0$
is the largest $\ell<i$ satisfying $\ell\not\in S$ (if no such $\ell$ exists, then
we set $i_0=0$).
\end{lem}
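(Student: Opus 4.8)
The plan is to unwind the order convention of Lemma \ref{ComponentwisePath} and then verify two ordinary componentwise inequalities straight from the defining conditions of $YT((m,m),S)$. Recall that by part (3) of Lemma \ref{ComponentwisePath} the relation ``$y(T)\le y(T')$'' abbreviates ``$y(T)$ is componentwise \emph{larger} than $y(T')$''. Hence the asserted sandwich $y_{\min}\le y(T)\le y_{\max}(S)$ is equivalent, in the usual componentwise order, to
$$Y_i\le y_i\le m+i\qquad(1\le i\le m),$$
and it suffices to prove these bounds for every $T\in YT((m,m),S)$, where $y(T)=(y_1,\dots,y_m)$ and $x(T)=(x_1,\dots,x_m)$ are as in Figure \ref{YoungT4}.

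The upper bound $y_i\le m+i$ is immediate and uses none of the wall data: the $m-i$ entries $y_{i+1}<\cdots<y_m$ are distinct elements of $[2m]$ all exceeding $y_i$, so $y_i\le 2m-(m-i)=m+i$. This gives the $y_{\min}$ half, and incidentally shows $y_{\min}=(m+1,\dots,2m)$ is attained, namely by the standard tableau with bottom row $1,\dots,m$, which is valid for every $S$.

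The substance is the lower bound $y_i\ge Y_i$, and here the key step is an observation about wall-free columns. If $j\notin S$, the column condition $x_j<y_j$ holds, so the $2j-1$ labels $x_1<\cdots<x_j$ together with $y_1<\cdots<y_{j-1}$ are distinct elements of $[2m]$ all strictly below $y_j$; counting them forces $y_j\ge 2j=Y_j$. This disposes of every index $i\notin S$. For $i\in S$ I would locate $i_0$ as in the statement. If $i_0=0$, then $Y_i=i$ and $y_i\ge i$ holds trivially since $y_1<\cdots<y_i$. If $i_0\ge 1$, then $i_0\notin S$, so the previous sentence gives $y_{i_0}\ge 2i_0$, and \emph{propagating} this through the strict increases $y_{i_0}<y_{i_0+1}<\cdots<y_i$ yields
$$y_i\ge y_{i_0}+(i-i_0)\ge 2i_0+(i-i_0)=i_0+i=Y_i,$$
which completes the proof.

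I expect the difficulty to be organizational rather than conceptual: one must keep the reversed order convention straight and, for $i\in S$, correctly single out the nearest wall-free column $i_0$ to the left in order to import the clean estimate $y_{i_0}\ge 2i_0$. Once the equivalence $x_j<y_j\iff y_j\ge 2j$ is isolated (the converse also holds, since $y_j\ge 2j$ places at least $j$ of the $x$'s below $y_j$, whence $x_j<y_j$), both bounds fall out mechanically; this same equivalence shows the two bounds are sharp and in fact identify the image of $\varphi$ with the full interval $[y_{\max}(S),y_{\min}]$, though only the stated one-sided implications are needed for the lemma.
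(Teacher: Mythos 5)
Your proof is correct and follows essentially the same route as the paper's: unwind the reversed componentwise order, get the $y_{\min}$ bound from $y_1<\cdots<y_m\leq 2m$, and get $y_i\geq Y_i$ by counting the labels forced below $y_i$ via the column conditions. The only cosmetic difference is that for $i\in S$ you propagate the bound $y_{i_0}\geq 2i_0$ through the strict increases $y_{i_0}<\cdots<y_i$, whereas the paper counts $x_1<\cdots<x_{i_0}<y_{i_0}$ and $y_1<\cdots<y_{i-1}$ directly; these are the same estimate, and your closing remark on sharpness corresponds to the paper's check that $y_{\max}(S)$ is itself a valid $y$-sequence.
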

\begin{proof}
Assume $y(T)=(y_1,\dots, y_m)$. Since $y_1<y_2<\cdots <y_m\leq 2m$ always hold,
$y_{\min}=(m+1,m+2,\dots, 2m)\leq y(T)$ for any $S$.

To see that $y(T) \leq y_{\max}(S)$, we first check that $(Y_1,\dots, Y_m)$
is a $y$-sequence of certain $T'\in YT((m,m),S)$. This is straightforward.
Next we claim that $y_i\geq Y_i$ for all $i$ so that the proof is completed.
We have to distinguish the two cases:
i) If $i\not\in S$, then $y_1<\cdots <y_i$ and $x_1<\cdots <x_i<y_i$. Thus $y_i\geq 2i=Y_i$;
ii) If $i\in S$, then $y_1<\cdots <y_i$ and $x_1<\cdots <x_{i_0}<y_{i_0}$.
Thus $y_i\geq i+i_0=Y_i$ as desired.
\end{proof}

\begin{prop}\label{TableauxLatticePartit}
Suppose $S$ is a subset of $[m]$. Let $T^0, T^M(S)$ be the tableau in $YT((m,m),S)$ with
$y$-sequence $y_{\min}$ and $y_{\max}(S)$ as in Lemma \ref{LemYminYmax}, respectively. Then
the following sets have the same cardinality.
\begin{enumerate}
  \item Tableaux $T$ in $YT((m,m),S)$ that satisfy $T^0 \leq T\leq T^M(S)$;

  \item Lattice paths from $(0,0)$ to $(m,m)$ that stay above $p(T^M(S))$;

  \item Partitions contained in $\mu(T^M(S))$.
\end{enumerate}
\end{prop}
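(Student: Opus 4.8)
The plan is to realize all three sets as images of a single encoding and then reduce the whole statement to one pointwise criterion for membership in $YT((m,m),S)$. The encoding to use is the reverse partition $T\mapsto\mu(T)$ (equivalently the Yamanouchi path $\varphi$). First I would record the bijective dictionary already implicit in the setup: since $\mu_i=x_i-i$ and $\{y_1,\dots,y_m\}$ is the complement of $\{x_1,\dots,x_m\}$ in $[2m]$, a filling with increasing rows is determined by $\mu(T)$ alone, via $x_i=\mu_i+i$ and $y(T)$ the complementary increasing sequence. The sequences arising this way are exactly the weakly increasing $0\le\mu_1\le\cdots\le\mu_m\le m$, i.e.\ partitions in the $m\times m$ box, and these are in the standard ``cells above the path'' bijection with lattice paths from $(0,0)$ to $(m,m)$. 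This immediately matches sets (2) and (3): a path lies above $p(T^M(S))$ iff its reverse partition is contained in $\mu(T^M(S))$.

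The heart of the argument is a pointwise reformulation of the wall condition $x_i<y_i$ for $i\notin S$. For a fixed $i$, among $\{1,\dots,x_i\}$ exactly $i$ entries are $x$-values (namely $x_1<\cdots<x_i$), so the number of $y$-values not exceeding $x_i$ equals $x_i-i=\mu_i$. Hence $x_i<y_i$ holds iff at most $i-1$ of the $y$'s are $\le x_i$, that is, iff $\mu_i\le i-1$. This shows $T\in YT((m,m),S)$ iff $\mu(T)$ is a box partition with $\mu_i\le i-1$ for every $i\notin S$. Combined with Lemma \ref{LemYminYmax}, which tells us $T^0$ and $T^M(S)$ are the minimum and maximum, this identifies set (1) (which is all of $YT((m,m),S)$) with the set of box partitions obeying these wall inequalities.

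It then remains to check that this constrained family of partitions is precisely $\{\mu:\mu\subseteq\mu(T^M(S))\}$, which closes the loop with set (3). One inclusion is exactly Lemma \ref{LemYminYmax} (via the order equivalence of Lemma \ref{ComponentwisePath}): every valid $T$ satisfies $\mu(T)\le\mu(T^M(S))$. For the reverse inclusion I would use that $T^M(S)$ is itself a valid tableau, so by the pointwise criterion $\mu(T^M(S))_i\le i-1$ for all $i\notin S$; consequently any partition $\mu\subseteq\mu(T^M(S))$ satisfies $\mu_i\le\mu(T^M(S))_i\le i-1$ for $i\notin S$ and is therefore valid. Injectivity of $T\mapsto\mu(T)$ is clear from the dictionary, so the three encodings give mutually compatible bijections and the three cardinalities agree.

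The main obstacle — really the only step needing genuine care — is establishing the pointwise equivalence $x_i<y_i\iff\mu_i\le i-1$ and, with it, the observation that the wall inequalities cut out a lower order ideal in the box whose unique maximal element is $\mu(T^M(S))$. The ``downward closure'' provided by $\mu(T^M(S))_i\le i-1$ is what promotes the one-sided bound of Lemma \ref{LemYminYmax} into an exact description of the image, and hence into the equality of cardinalities; everything else is bookkeeping within the already-established encodings.
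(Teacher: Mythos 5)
Your proof is correct and follows essentially the same route as the paper: both identify the three sets through the Yamanouchi/reverse-partition encoding, use Lemma \ref{LemYminYmax} together with Lemma \ref{ComponentwisePath} for the one-sided bound, and obtain the reverse direction (surjectivity) by noting that the wall conditions are inherited downward from the valid tableau $T^M(S)$. Your pointwise criterion $x_i<y_i\iff\mu_i\le i-1$ is just a repackaging, in terms of $\mu$, of the paper's chain of inequalities $x_i\le X_i<Y_i\le y_i$.
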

\begin{proof}
The bijection between sets $(2)$ and $(3)$ is obvious, so it suffices to show that $T\mapsto p(T)$ is the desired bijection from set $(1)$ to set $(2)$. For a given tableaux $T$ in set $(1)$, the $y$-sequence of $T$ is $y(T)=(y_1,y_2,...,y_m)$. By Proposition \ref{LemYminYmax}, we have $y(T) \leq y_{\max}(S)$, i.e., $y_i\geq Y_i$. By Lemma \ref{ComponentwisePath}, the path $p(T)$ lies above the path $p(T^M(S))$. So $p(T)$ is well defined between sets $(1)$ and $(2)$. Moreover, $T\mapsto p(T)$ is an injection.

Given a path $P$ that stays above $p(T^M(S))$, we need to find a desired $T$ satisfying $p(T)=P$. Assuming that the $y$-sequence of $T^{M}(S)$ is $(Y_1,Y_2,...,Y_m)$, and the $x$-sequence of $T^{M}(S)$ is $(X_1,X_2,...,X_m)$. If $i\notin S$, then $X_i<Y_i$.
Let's construct a $T$ such that $p(T)=P$.
If the $r$-th step of $P$ is an $E$ step, then the entry $r$ appears in the first row of $T$. We mark it as $(y_1,y_2,...,y_m)$ in ascending order. So we have $y_i\geq Y_i$ for all $i$.  If the $r$-th step of $P$ is a $N$ step, then the entry $r$ appears in the second row of $T$. We mark it as $(x_1,x_2,...,x_m)$ in ascending order. So we have $x_i\leq X_i$ for all $i$. If $i\notin S$, then $x_i\leq X_i<Y_i\leq y_i$. So $T$ in $YT((m,m),S)$ that satisfy $T^0 \leq T\leq T^M(S)$. Furthermore, $p(T)=P$ and $p(T)$ is a bijection. This completes the proof.
\end{proof}

See Figure \ref{YoungT3} for an example. The first step of $p(T)$ is always $N$ since the $x_1$ in $\mathcal{B}_m^n$
has to be equal to $1$.

For the cardinality of $YT((m,m),S)$, there is a determinant formula by
the following result of Kreweras \cite{G.Kreweras} in lattice path counting theory.
\begin{lem}[\cite{G.Kreweras}]\label{DeterminParti}
Let $P_1=(a_1,a_2,...,a_n)$ and $P_2=(b_1,b_2,...,b_n)$ be two fixed reverse partitions, where $a_1\leq a_2\leq \cdots \leq a_n$ and $b_1\leq b_2\leq \cdots \leq b_n$. Suppose $P_2\leq P_1$, i.e., $b_i\leq a_i$ for $1\leq i\leq n$. The number of partitions lying between $P_2$ and $P_1$ is
$$\det \left(\binom{a_i-b_j+1}{j-i+1}\right)_{1\leq i,j\leq n}.$$
\end{lem}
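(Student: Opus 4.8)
The plan is to prove this via the Lindstr\"om--Gessel--Viennot (LGV) lemma, realizing each partition $\mu=(\mu_1\le \cdots \le \mu_n)$ with $b_i\le \mu_i\le a_i$ as a family of non-intersecting monotone lattice paths. First I would fix a constant $C$ (large enough that all coordinates below are nonnegative) and introduce sources $A_i=(i-1,\,C-a_i-i)$ and sinks $B_j=(j,\,C-b_j-j)$ for $1\le i,j\le n$. A direct count shows that a monotone $NE$-path from $A_i$ to $B_j$ has horizontal displacement $j-i+1$, vertical displacement $a_i-b_j+i-j$, and hence total length $a_i-b_j+1$, so the number of such paths is exactly $\binom{a_i-b_j+1}{j-i+1}$, with the usual convention that a binomial with negative lower index is $0$ (this accounts for the subdiagonal entries, where indeed no path from $A_i$ to $B_j$ exists because $j<i-1$). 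This is precisely the $(i,j)$-entry of Kreweras's matrix.

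Next I would identify non-intersecting families connecting $A_i$ to $B_i$ with the partitions to be counted. Since a path from $A_i$ to $B_i$ has horizontal displacement $1$, it makes a single east step and is confined to the vertical strip $i-1\le x\le i$; hence only consecutive paths can meet, and only along the line $x=i$. Writing the height of the unique east step of the $i$-th path as $C-\mu_i-i$, one checks that admissibility of the path forces $\mu_i\in[b_i,a_i]$. I would then show that vertex-disjointness of the $i$-th and $(i+1)$-th paths along $x=i$ is equivalent to $\mu_i\le \mu_{i+1}$. Thus families of non-intersecting paths joining $A_i$ to $B_i$ correspond bijectively to partitions $\mu$ with $b_i\le \mu_i\le a_i$ and $\mu_1\le \cdots \le \mu_n$, which is exactly the set being counted.

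Finally I would invoke the LGV lemma: the signed sum $\sum_\sigma \operatorname{sgn}(\sigma)\,|\mathcal{N}_\sigma|$ over permutations $\sigma$, where $\mathcal{N}_\sigma$ is the set of non-intersecting families joining $A_i$ to $B_{\sigma(i)}$, equals $\det\bigl(\binom{a_i-b_j+1}{j-i+1}\bigr)_{1\le i,j\le n}$. Because the sources and the sinks are both listed in increasing $x$-coordinate with compatibly strictly decreasing $y$-coordinate (using that $a_i+i$ and $b_j+j$ are strictly increasing), any non-identity permutation forces two of the paths to cross, so $\mathcal{N}_\sigma=\emptyset$ unless $\sigma=\mathrm{id}$. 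Hence the determinant collapses to $|\mathcal{N}_{\mathrm{id}}|$, which is the desired count.

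The step I expect to be the main obstacle is this last one: rigorously verifying the compatibility hypothesis of LGV, namely that every non-identity permutation yields an empty set of non-intersecting families. The path-count bookkeeping and the translation of non-intersection into $\mu_i\le \mu_{i+1}$ are routine, but one must argue carefully --- typically by taking a minimal inversion of $\sigma$ and showing the corresponding two paths are forced to share a vertex --- to rule out all non-trivial matchings and legitimately reduce the signed LGV sum to a single determinant. A self-contained alternative would be to express both sides as functions of $(a_i)$ and $(b_j)$ and verify that they satisfy the same recursion obtained by deleting the largest part $\mu_n$, but the LGV route is more transparent and fits the lattice-path framework already in use.
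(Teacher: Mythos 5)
Your proposal is correct, but be aware that the paper does not actually prove this lemma: it is quoted as a known result of Kreweras \cite{G.Kreweras}, with only a remark that a $q$-analogue was later obtained by Handa--Mohanty and reproved by Gessel--Loehr via an involution. Your LGV argument is therefore a genuinely different, self-contained route, and its three steps all check out. The entry count is right: a path from $A_i=(i-1,\,C-a_i-i)$ to $B_j=(j,\,C-b_j-j)$ has $j-i+1$ east steps among $a_i-b_j+1$ total. The identification of identity families with partitions is right: on the line $x=i$, the $i$-th path occupies the heights $[C-\mu_i-i,\;C-b_i-i]$ and the $(i+1)$-th path occupies $[C-a_{i+1}-i-1,\;C-\mu_{i+1}-i-1]$, and since $b_i\le a_i\le a_{i+1}+1$ always holds, these integer intervals are disjoint precisely when $\mu_i\le\mu_{i+1}$. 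And the compatibility step you flagged as the main obstacle does go through cleanly: every point of a path $A_i\to B_j$ has height in $[C-a_i-i,\;C-b_j-j]$, and since $a_i+i$ and $b_j+j$ are strictly increasing, a path $A_{i'}\to B_{j'}$ with $i<i'$ and $j'<j$ starts strictly below, and ends strictly above, every point of any path $A_i\to B_j$, while its abscissa range $[i'-1,j']$ is contained in $[i-1,j]$; following the second path step by step, a north step or an east step that jumps from ``strictly below'' to ``strictly above'' the first path forces an immediate contradiction with monotonicity, so the two paths must share a lattice point and every non-identity permutation contributes zero. What your approach buys is a transparent, fully elementary proof inside the lattice-path framework the paper already uses; what the citation buys the authors is brevity. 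One small repair: state the convention as $\binom{m}{k}=0$ unless $0\le k\le m$, not merely for negative lower index $k$. When $b_j>a_i+1$ (which can happen for $j>i$, e.g. $a=(0,5)$, $b=(0,3)$) the upper index $a_i-b_j+1$ is negative while $j-i+1>0$, and the polynomial convention $\binom{-2}{2}=3$ would falsify the determinant formula, whereas the path count (and hence your proof) correctly gives $0$ there.
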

For Lemma \ref{DeterminParti}, the weighted counting of the partitions by area has been studied. A $q$-analogous determinant formula was first obtained by Handa and Mohanty \cite{B.R.Handa},
and later proved by Gessel and Loehr \cite{IraM.Gessel} using an elegant involution argument.

Now we can give a determinant formula for $\overline{f}_m(n)$.
\begin{prop}\label{DeterFormuFM}
Follow the notation in Section 1. The cardinality $\overline{f}_m(n)=\# YT(\mathcal{B}_m^n)$
is the number of lattice paths from $(0,0)$ to $(mn,mn)$ that never go below the path $N(E^mN^m)^{n-1}E^mN^{m-1}$.
Moreover, we have
$$\overline{f}_m(n)=\det \left(\binom{a_i+1}{j-i+1}\right)_{1\leq i,j\leq nm-1},$$
where
$$a_i=hm,\ \ (h-1)m+1\leq i\leq hm,\ 1\leq h\leq n-1$$
and
$$a_{i}=mn,\ \ (n-1)m+1\leq i\leq nm-1.$$
\end{prop}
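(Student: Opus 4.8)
The plan is to obtain the statement as a direct specialization of the results already established in Section~2, so the proof is essentially a matter of unwinding the definitions and applying Lemma~\ref{DeterminParti} to the correct pair of reverse partitions. First I would observe that $\mathcal{B}_m^n$ is exactly the Young building of shape $(mn,mn)$ whose wall set is $S=[mn]\setminus\{1,m+1,2m+1,\dots,(n-1)m+1\}$, i.e.\ every column except those of the form $jm+1$ carries a horizontal wall. Thus $\overline{f}_m(n)=\#YT((mn,mn),S)$, and since the minimal tableau $T^0$ always has $y$-sequence $y_{\min}$, the containment $T^0\le T\le T^M(S)$ is automatic; hence Proposition~\ref{TableauxLatticePartit} applies with $m$ replaced by $mn$ and tells us that $\overline{f}_m(n)$ equals the number of lattice paths from $(0,0)$ to $(mn,mn)$ staying above $p(T^M(S))$.

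The next step is to compute the boundary path $p(T^M(S))$ explicitly using the rule in Lemma~\ref{LemYminYmax}. For $i$ of the form $jm+1$ (the columns \emph{not} in $S$) we get $Y_i=2i$, while for the remaining $i\in S$ we have $Y_i=i_0+i$ where $i_0$ is the largest non-wall index below $i$; a short computation shows these $Y_i$ values produce precisely the staircase path $N(E^mN^m)^{n-1}E^mN^{m-1}$. The cleanest way to verify this is to translate the condition ``$P$ stays above $p(T^M(S))$'' into the reverse-partition language $\mu(T)\le\mu(T^M(S))$ via Lemma~\ref{ComponentwisePath}, using the identity $\mu_i=x_i-i$ noted just before Figure~\ref{YoungT3}, and to read off the co-area sequence of this staircase path directly. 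This is where I would be most careful, since the two rectangular shapes $(m,m)$ and the aggregated $(mn,mn)$ must be reconciled and the first forced $N$-step (noted after Proposition~\ref{TableauxLatticePartit}, forcing $x_1=1$) accounts for the leading $N$ and explains why the determinant has size $nm-1$ rather than $nm$.

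With the boundary path identified, I would apply Kreweras's determinant (Lemma~\ref{DeterminParti}) to count partitions lying between the zero partition $P_2=(0,\dots,0)$ and $P_1=\mu(T^M(S))$. Setting $b_j=0$ for all $j$ collapses the general formula $\det\bigl(\binom{a_i-b_j+1}{j-i+1}\bigr)$ to $\det\bigl(\binom{a_i+1}{j-i+1}\bigr)_{1\le i,j\le nm-1}$, matching the claimed determinant once we check that the entries $a_i$ of $\mu(T^M(S))$ are exactly the piecewise-constant sequence stated in the proposition: $a_i=hm$ on each block $(h-1)m+1\le i\le hm$ for $1\le h\le n-1$, and $a_i=mn$ on the final block $(n-1)m+1\le i\le nm-1$.

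The main obstacle I anticipate is purely bookkeeping rather than conceptual: correctly reindexing $y_{\max}(S)$ into the co-area sequence $(a_i)$ and confirming the off-by-one that reduces the matrix dimension from $nm$ to $nm-1$. Because the forced initial $N$-step fixes the first coordinate, one partition part is trivially determined and should be dropped, and I would double-check that the block boundaries of $(a_i)$ align with the wall positions $jm+1$ so that the step pattern $E^mN^m$ repeats correctly. Once the sequence $(a_i)$ is pinned down, the determinant formula follows immediately from Lemma~\ref{DeterminParti}, completing the proof.
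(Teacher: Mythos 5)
Your proposal is correct and follows essentially the same route as the paper: identify the wall set $S=\{jm+i\mid 2\leq i\leq m,\ 0\leq j\leq n-1\}$, apply Proposition~\ref{TableauxLatticePartit} to the shape $(mn,mn)$ to get the lattice-path interpretation, compute $p(T^M(S))=N(E^mN^m)^{n-1}E^mN^{m-1}$ and $\mu(T^M(S))=(0,m,\dots,m,2m,\dots,2m,\dots,mn,\dots,mn)$, and then invoke Lemma~\ref{DeterminParti} against the zero partition, discarding the forced leading $0$ to obtain the $(nm-1)\times(nm-1)$ determinant. The paper's proof is simply a terser version of this same argument, leaving the verification of the boundary path and co-area sequence to the reader, which you correctly carry out via $\mu_i=x_i-i$.
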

\begin{proof}
The first part follows by Proposition \ref{TableauxLatticePartit} with respect to $S=\{jm+i\mid 2\leq i\leq m, 0\leq j\leq n-1\}$, where
one can verify that
$$p(T^0)=N^{mn}E^{mn},\ \ \mu(T^0)= \underbrace{(0,0,...,0)}_{nm},$$
and that
$$p(T^M(S))=N(E^mN^m)^{n-1}E^mN^{m-1},$$
$$\mu(T^M(S))= (0,\underbrace{m,...,m}_{m},\underbrace{2m,...,2m}_{m},...,\underbrace{(n-1)m,...,(n-1)m}_{m},
\underbrace{mn,...,mn}_{m-1}).$$

The second part then follows by the first part and Lemma \ref{DeterminParti}, in which we ignore the first element $0$.
\end{proof}

We are interested in finding a nice formula for the generating function of $\overline{f}_m(n)$. Proposition \ref{DeterFormuFM} does not seem to help.
Next we give a recursive formula as follows.
\begin{prop}\label{RecurFormula}
Follow the notation as above. Suppose $m\geq 2$ and $n\geq 1$. Then we have
$$\overline{f}_m(n)=\binom{2mn-1}{mn}-\sum_{i=1}^{n-1}\sum_{t=1}^mg_t(i)\binom{2m(n-i)+m-t-1}{m(n-i)-1}.$$
where,
$$g_t(i)=\binom{2mi-m+t-1}{mi}-\sum_{j=1}^{i-1}\sum_{r=1}^mg_r(j)\binom{2m(i-j)+t-r-1}{m(i-j)-1}$$
and the initial value are
$$g_t(1)=\binom{m+t-1}{m},\ \ 1\leq t\leq m.$$
Note that $g_m(i)=\overline{f}_m(i)$.
\end{prop}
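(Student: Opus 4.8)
The plan is to prove Proposition~\ref{RecurFormula} by a standard ``first-return'' / inclusion--exclusion decomposition of the lattice paths counted by $\overline{f}_m(n)$, organized around the partial order and lattice-path interpretation already established in Proposition~\ref{DeterFormuFM}. By Proposition~\ref{DeterFormuFM}, $\overline{f}_m(n)$ counts lattice paths from $(0,0)$ to $(mn,mn)$ that never go below the staircase boundary $N(E^mN^m)^{n-1}E^mN^{m-1}$. The total number of \emph{unrestricted} lattice paths from $(0,0)$ to the relevant endpoint is a single binomial coefficient, and the strategy is to subtract off, by inclusion--exclusion, exactly those paths that violate the boundary constraint somewhere. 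The key bookkeeping device is to track where a path first crosses below the forbidden staircase, i.e.\ to decompose a ``bad'' path according to the first lattice point at which it dips below the boundary $p(T^M(S))$.

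First I would set up the unrestricted count: interpreting $\overline{f}_m(n)$ as paths ending at a point on the diagonal-like staircase, the leading term $\binom{2mn-1}{mn}$ should arise as the number of paths to the appropriate endpoint \emph{without} the lower-boundary restriction (after using the forced first step $N$ observed in the remark following Proposition~\ref{TableauxLatticePartit}, which is why the top index is $2mn-1$ rather than $2mn$). Next I would introduce the auxiliary quantities $g_t(i)$ as refined first-passage counts: $g_t(i)$ should count, in a block of width $m$ at ``level'' $i$, the paths that respect the boundary up to that block but are tagged by the parameter $t\in\{1,\dots,m\}$ recording how far along the $m$-periodic wall pattern the first violation (or first return to the boundary) occurs. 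The recursion for $g_t(i)$ is then the same first-passage decomposition applied one period earlier: the leading term $\binom{2mi-m+t-1}{mi}$ counts unrestricted paths with the offset $t$, and the double sum $\sum_{j=1}^{i-1}\sum_{r=1}^m g_r(j)\binom{2m(i-j)+t-r-1}{m(i-j)-1}$ removes those that violated the boundary at an earlier block $j$ with internal parameter $r$, the binomial factor being the number of free continuations across the remaining $i-j$ blocks. The initial value $g_t(1)=\binom{m+t-1}{m}$ is just the base case of this count for a single block, and the identity $g_m(i)=\overline{f}_m(i)$ records that taking $t=m$ recovers the fully-constrained count, which pins down the outer formula as the special case of the same decomposition.

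The main obstacle I expect is \textbf{identifying the precise combinatorial meaning of the parameter $t$ and verifying that the binomial factors are exactly the claimed shifted central-type coefficients}. The boundary $N(E^mN^m)^{n-1}E^mN^{m-1}$ is $m$-periodic but with irregular ``caps'' at both ends (the initial $N$ and the terminal $N^{m-1}$ rather than $N^m$), so the offsets $-1$, $+m-t-1$, $+t-r-1$ in the various binomials must be matched carefully against the geometry of the staircase corners. The cleanest way to control this is to fix the convention that a first violation occurs exactly when a path passes through a corner point of the staircase $p(T^M(S))$ one unit below the allowed region, use the reflection-free first-passage principle (decompose at the first such corner, where the path and boundary agree, then count the unconstrained completion by a single binomial), and let $t$ index the residue of the violating column modulo the period $m$. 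I would verify the two binomial factors separately by computing the coordinates of the staircase corners explicitly: for the outer sum, the point reached after $i$ blocks plus the remaining free run to the endpoint gives $\binom{2m(n-i)+m-t-1}{m(n-i)-1}$; for the inner recursion the analogous corner computation gives $\binom{2m(i-j)+t-r-1}{m(i-j)-1}$.

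Once the decomposition and the corner coordinates are fixed, the proof reduces to checking that every restricted path is counted once in the ``good'' term and every unrestricted path that is not restricted is cancelled exactly once in the subtracted sums, which is a routine (if notation-heavy) verification that the first-passage decomposition is a genuine bijection onto the indexing set $\{(i,t)\}$ together with a free completion. I would close by confirming the base case $g_t(1)=\binom{m+t-1}{m}$ directly (a single block has no earlier violations, so only the leading term survives) and the consistency check $g_m(i)=\overline{f}_m(i)$, which follows by comparing the $t=m$ instance of the $g$-recursion with the stated formula for $\overline{f}_m(n)$ term by term.
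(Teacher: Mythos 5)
Your proposal follows essentially the same route as the paper: the paper likewise proves the proposition by inclusion--exclusion on the lattice-path interpretation, classifying bad paths by their first touch of the staircase boundary (presented there only as a sketch via the $m=3$, $n=1,2$ example), with $g_t(i)$ counting the boundary-respecting initial segments up to the touch point and a single binomial counting the free completion, exactly as you outline. One detail your planned corner-coordinate computation would surface: the parameter $t$ indexes the height along the vertical run of the boundary at $x=im$ (in the paper's shifted picture the possible first-touch points of block $i$ are $(im,\,im-m+1+t)$ for $1\le t\le m$), not the residue of the violating column modulo $m$ as you guessed, since all touch points of a given block share the same column.
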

\begin{proof}
We use the lattice path interpretation. The theorem is
an application of the inclusion-exclusion principle. We only illustrate the idea by a simple example, since the
detailed proof is conceptually simple but tedious to present.

Consider the counting of lattice paths for the case $m=3, n=2$ in Figure \ref{YoungT3}.
Firstly, we move $(0,0)$ and $(6,6)$ to $(-1,1)$ (point $M$) and $(5,7)$ (point $F$) respectively. Then we need to count lattice paths
from $M$ to $F$ that never touch the lower red boundary $NE^3N^3E^3N^2$. For this, we subtract from $\binom{6+6}{6}$
the number of bad lattice paths, i.e, those touch the boundary. We classify the bad paths by their first touching of the boundary, at
the points $A$, $B$, $C$ and $D$ in Figure \ref{YoungT3}. This gives
$$\overline{f}_3(2)=\binom{6+6}{6}-1\times \binom{5+6}{5}-1\times \binom{2+5}{2}-\binom{3+1}{3}\binom{2+4}{2}-\binom{3+2}{3}\binom{2+3}{2}=281.$$
The subtracted terms are obtained in a similar way. Let us only explain the last term corresponding to bad paths first touching $D$.
The first term counts the number of ways to go from $M$ to $D$ without touching the boundary.
This corresponds to going from $(0,0)$ to $C$ without going below the boundary, and is hence counted by $g_i(t)$,
but becomes simply $\binom{3+2}{3}$ in the displayed picture; The second term counts the number of ways to go from $D$ to $F$ arbitrarily,
which is clearly $\binom{2+3}{2}$.
\end{proof}

In principle, Proposition \ref{RecurFormula} gives rise to a system of equations on the generating functions $G_t(x)$ of $g_t(n)$ for $t=1,2,\dots,m$. Then one can solve
for $G_t(x)$ for all $t$ and hence $\overline{F}_m(x)$. Indeed, this idea allows us to work out $\overline{F}_m(x)$ for $m=3,4$. Then
in OEIS, we find that $\overline{F}_3(x)$ and $\overline{F}_4(x)$ correspond to [A213403] and [A213404] respectively. We further find that a similar system has been solved in lattice path counting theory, which will be discussed in the next section.

\section{Lattice Path Enumeration}

Before obtaining the main results of this section, we need to introduce some definitions and conclusions about symmetric functions.

\subsection{Symmetric Functions}
We need some basic results on symmetric functions. Most of them can be found in \cite{RP.Stanley99}.

Let $X=(x_1,x_2,...)$ be a set of indeterminates. A \emph{homogeneous symmetric function} of degree $n$ ($n$ is a non-negative integer) over a commutative ring $R$ (with identity) is a formal power series
$$SF(X)=\sum_{\nu} c_{\nu}X^{\nu},$$
where $\nu$ ranges over all weak compositions $\nu=(\nu_1,\nu_2,...)$ of $n$, $c_{\nu}\in R$, $X^{\nu}=x_1^{\nu_1}x_2^{\nu_2}\cdots$,
and $c_\nu=c_{\mu}$ if $\mu$ is obtained from $\nu$ by permuting the entries.

Let $\Lambda^n$ be the set of all homogeneous symmetric functions of degree $n$ over $R$. Then $\Lambda^n$ is a $R$-module. If $R$ is the rational number field $\mathbb{Q}$, then $\Lambda^n$ is a vector space. Let $\Lambda=\Lambda^0\oplus \Lambda^1\oplus \cdots$ be the vector space direct sum.

Let $\mu=(\mu_1, \mu_2,...)_{\ge}$ be a partition of $n$, denoted $\mu\vdash n$. The length of $\mu$, denoted $l(\mu)$, is the number of nonzero entries of $\mu_i$.
There are five important basis of $\Lambda^n$ indexed by partitions, namely:
\begin{enumerate}
  \item The monomial symmetric functions: $m_{\mu}=m_{\mu}(X)=\sum_{\nu}X^{\nu}$, where $\nu$ ranges over all rearrangements of the given partition $\mu$ and $m_{\emptyset}=1$.

  \item The elementary symmetric functions: $e_{\mu}=e_{\mu_1}e_{\mu_2}\cdots$, where $e_{n}=m_{1^n}$ and $e_0=1$.

  \item The complete symmetric functions: $h_{\mu}=h_{\mu_1}h_{\mu_2}\cdots$, where $h_n=\sum_{\kappa \vdash n}m_{\kappa}$ and $h_0=1$.

  \item The power sum symmetric functions: $p_{\mu}=p_{\mu_1}p_{\mu_2}\cdots$, where $p_n=m_{n}$ and $p_0=1$.

  \item The Schur functions: $s_{\mu}=\sum_{\nu\vdash |\mu|}K_{\mu\nu}m_{\nu}$, where $K_{\mu\nu}$ is the Kostka number.
\end{enumerate}

We only use symmetric functions on a finite number of variables, say $x_1,\dots, x_n$. One can treat $x_i=0$ for $i>n$.
Let $\Lambda_n$ be the set of all polynomials $f(X)$ in $\mathbb{Q}[x_1,...,x_n]$ that are invariant under any permutation of the variables. Then $f(X)$ is just a symmetric function on $X=(x_1,...,x_n,0,0,...)$.

The following is the classical definition of Schur functions in the variables $(x_1,...,x_n)$. See, e.g., \cite[Theorem 7.15.1]{RP.Stanley99}.
\begin{lem}[\cite{RP.Stanley99}]\label{SchurDeterm}
Let $\mu=(\mu_1, \mu_2,...)$ be a partition of length $l(\mu)\leq n$. Let $\delta=(n-1,n-2,...,0)$. Then we have
$$s_{\mu}(x_1,...,x_n)=\frac{a_{\mu+\delta}}{a_{\delta}},$$
where $a_{\mu+\delta}=\det (x_i^{\mu_j+n-j})_{i,j=1}^n$ and $a_{\delta}=\det(x_i^{n-j})_{i,j=1}^n$.
\end{lem}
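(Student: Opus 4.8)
The plan is to prove the equivalent multiplicative identity $a_{\mu+\delta}=s_\mu\cdot a_\delta$; since $a_\delta=\det(x_i^{n-j})$ is the Vandermonde determinant $\prod_{i<j}(x_i-x_j)\neq 0$ in $\mathbb{Q}[x_1,\dots,x_n]$, dividing by it recovers the stated formula. First I would record the easy structural facts about $a_{\mu+\delta}=\det(x_i^{\mu_j+n-j})$. Transposing two variables $x_a\leftrightarrow x_b$ swaps two rows of the determinant, so $a_{\mu+\delta}$ is alternating; hence it vanishes when $x_a=x_b$ and is divisible by each factor $x_a-x_b$, i.e. by $a_\delta$. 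The quotient $a_{\mu+\delta}/a_\delta$ is therefore a genuine polynomial, symmetric (a ratio of two alternating polynomials) and homogeneous of degree $(|\mu|+\binom{n}{2})-\binom{n}{2}=|\mu|$. At this point a leading-term comparison shows that both $a_{\mu+\delta}/a_\delta$ and $s_\mu$ equal $m_\mu$ plus a combination of monomials $m_\nu$ with $\nu<\mu$ in dominance order, but this unitriangularity only matches the top term and does \emph{not} by itself force equality; the lower $m_\nu$-coefficients must be pinned down, which is the real content.

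So the core step is to compare the full monomial expansions of $s_\mu a_\delta$ and $a_{\mu+\delta}$. Expanding, I would write $s_\mu=\sum_T x^{\mathrm{ct}(T)}$ over semistandard tableaux $T$ of shape $\mu$ with entries in $[n]$ (this is exactly the definition $s_\mu=\sum_\nu K_{\mu\nu}m_\nu$), and $a_\delta=\sum_{\sigma\in S_n}\mathrm{sgn}(\sigma)\,x^{\sigma(\delta)}$, where $S_n$ is the symmetric group. Both $s_\mu a_\delta$ and $a_{\mu+\delta}$ are alternating, so their coefficients vanish on any exponent vector with a repeated entry, and it suffices to compare coefficients on strictly decreasing exponent vectors $\beta=(\beta_1>\cdots>\beta_n)$. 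Collecting terms, the coefficient of $x^\beta$ in $s_\mu a_\delta$ is $\sum_{\sigma\in S_n}\mathrm{sgn}(\sigma)\,K_\mu(\beta-\sigma(\delta))$, where $K_\mu(\gamma)$ counts semistandard tableaux of shape $\mu$ and content $\gamma$ (and is $0$ if $\gamma$ has a negative entry). Since the only strictly decreasing rearrangement of $\mu+\delta$ is $\mu+\delta$ itself, the target reduces to the single identity
\[
\sum_{\sigma\in S_n}\mathrm{sgn}(\sigma)\,K_\mu\bigl(\beta-\sigma(\delta)\bigr)=\chi(\beta=\mu+\delta)
\]
for every strictly decreasing $\beta$, where $\chi$ is the indicator of the preceding excerpt.

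The hard part, and the step I expect to be the main obstacle, is establishing this signed identity by a sign-reversing involution on the set of pairs $(\sigma,T)$ with $\sigma\in S_n$ and $T$ a semistandard tableau of shape $\mu$ and content $\beta-\sigma(\delta)$, weighted by $\mathrm{sgn}(\sigma)$. The idea is to scan the $\delta$-shifted data for a \emph{violation}: locate the first index at which the configuration fails to be forced, then transpose the two offending values of $\sigma$ while correspondingly repairing $T$, producing a new valid pair with opposite sign and the same monomial. One must check that this map is well defined, is an involution, is genuinely sign-reversing, and that its only fixed points force $\sigma=\mathrm{id}$ together with the unique tableau of content $\beta-\delta$, which exists precisely when $\beta=\mu+\delta$ (the superstandard tableau having $\mu_i$ copies of $i$ in row $i$). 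Getting the bookkeeping of this swap exactly right—ensuring it stays inside the semistandard tableaux and toggles the sign—is where the care is needed. I would note in passing that, because this paper is built on lattice-path enumeration, an attractive alternative is to realize $a_{\mu+\delta}/a_\delta$ through the Lindström–Gessel–Viennot lemma for families of non-intersecting paths, which yields the tableau sum directly; I would, however, present the involution argument as the primary proof, since it stays closest to the monomial definition of $s_\mu$ adopted here.
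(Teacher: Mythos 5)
The paper does not prove this lemma at all: it is quoted from \cite[Theorem 7.15.1]{RP.Stanley99}, so there is no internal argument to compare against, and the natural benchmark is the classical proof in that reference, which your plan follows. Your reductions are correct and complete: $a_{\mu+\delta}$ is alternating, hence divisible by $a_\delta$, with symmetric homogeneous quotient of degree $|\mu|$; both $s_\mu a_\delta$ and $a_{\mu+\delta}$ are alternating, hence determined by their coefficients on strictly decreasing exponent vectors $\beta$; and on such $\beta$ the claim reduces exactly to the signed identity $\sum_{\sigma\in S_n}\mathrm{sgn}(\sigma)\,K_\mu\bigl(\beta-\sigma(\delta)\bigr)=\chi(\beta=\mu+\delta)$. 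You are also right that unitriangularity in dominance order alone cannot close the argument.

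The genuine gap is that this signed identity, which you yourself call the real content, is never proved. ``Locate the first index at which the configuration fails to be forced, then transpose the two offending values of $\sigma$ while correspondingly repairing $T$'' is a list of desiderata for an involution, not a construction of one: you do not say what a violation is, what the repaired tableau is, or why the repaired filling is still semistandard with content $\beta-\sigma'(\delta)$ for the transposed permutation $\sigma'$. The classical pitfalls live precisely in these checks --- a naive entry swap destroys semistandardness (which is why Bender--Knuth-type exchanges, or the tail-swapping involution in the Lindstr\"om--Gessel--Viennot formulation you mention only in passing, are engineered with care), the chosen ``first violation'' must itself be stable under the swap or the map fails to be an involution, and the fixed-point analysis must be shown to force $\sigma=\mathrm{id}$, $\beta=\mu+\delta$, and $T$ superstandard. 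Until that map is written down and verified, the proposal is a correct reduction plus an IOU for the one step carrying all the difficulty. A secondary inaccuracy: the tableau sum $\sum_T x^{\mathrm{ct}(T)}$ over semistandard tableaux with entries in $[n]$ is \emph{not} ``exactly the definition'' $s_\mu=\sum_\nu K_{\mu\nu}m_\nu$ used in this paper; identifying the two requires that the number of semistandard tableaux of shape $\mu$ and content $\gamma$ be invariant under permuting $\gamma$ (Bender--Knuth). This is repairable inside your own scheme --- prove the alternant identity for the tableau sum, then observe that the resulting quotient of alternants is symmetric, which forces its monomial expansion to be $\sum_\nu K_{\mu\nu}m_\nu$ --- but as written you would be proving the statement for a possibly different function than the paper's $s_\mu$.
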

We denote the Vandermonde determinant by
$$\det V(x_1,...,x_n):=(-1)^{\frac{n(n-1)}{2}}a_{\delta}=\prod_{1\leq j<i\leq n}(x_i-x_j).$$

We also use the \emph{plethystic notation} for symmetric functions to simplify the proof. A good reference to plethystic notation is \cite{J.Haglund}. Let $E(x_1,x_2,...)$ be a formal series of rational functions in the parameters $x_1,x_2,...$. We define the plethystic substitution of $E$ into $p_k$, denoted $p_k[E]$, by $p_k[E]=E(x_1^k,x_2^k,...)$. This is to distinguish $p_k[E]$ from the ordinary $k$-th power sum in a set of variables $E$.

\begin{lem}[\cite{J.Haglund}]\label{PlethysticNot}
Let $m\in \mathbb{N}$. We have
\begin{align*}
&e_m[x_1+x_2+\cdots +x_n]=\sum_{i_1< i_2<\cdots <i_m}x_{i_1}x_{i_2}\cdots x_{i_m},
\\&e_m[E-F]=\sum_{i=0}^me_i[E]e_{m-i}[-F],\ \ \ \ e_m[-X]=(-1)^mh_m[X].
\end{align*}
\end{lem}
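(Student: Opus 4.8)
The plan is to reduce all three identities to a single organizing principle: since $f[E]$ is by definition computed by expanding a symmetric function $f$ in the power-sum basis and then performing the substitution $p_k\mapsto p_k[E]=E(x_1^k,x_2^k,\dots)$, any identity among symmetric functions that is visible at the level of power-sum generating functions transports verbatim to the plethystic setting. The two facts I would extract immediately from the stated definition are that $p_k$ is \emph{additive}, $p_k[E+F]=p_k[E]+p_k[F]$, and \emph{sign-reversing under negation}, $p_k[-F]=-p_k[F]$; both follow at once by evaluating the formal series $E+F$ and $-F$ at $x_i\mapsto x_i^k$. The whole lemma then rests on the two classical exponential generating functions (see \cite{RP.Stanley99}), which after plethystic substitution of an alphabet $A$ read $\sum_{m\ge 0}e_m[A]\,t^m=\exp\bigl(\sum_{k\ge 1}\tfrac{(-1)^{k-1}}{k}p_k[A]\,t^k\bigr)$ and $\sum_{m\ge 0}h_m[A]\,t^m=\exp\bigl(\sum_{k\ge 1}\tfrac1k p_k[A]\,t^k\bigr)$.

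For the first identity I would take the alphabet $E=x_1+\cdots+x_n$, so that $p_k[E]=x_1^k+\cdots+x_n^k=p_k(x_1,\dots,x_n)$ is the ordinary power sum. Substituting $E$ into $e_m$ therefore simply recovers the ordinary elementary symmetric function $e_m(x_1,\dots,x_n)=\sum_{i_1<\cdots<i_m}x_{i_1}\cdots x_{i_m}$, which is the claim.

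For the additivity identity I would set $A=E-F$ in the $e$-generating function and invoke additivity of $p_k$ to split the exponent, so that $\sum_m e_m[E-F]\,t^m=\exp\bigl(\sum_k\tfrac{(-1)^{k-1}}{k}p_k[E]\,t^k\bigr)\exp\bigl(\sum_k\tfrac{(-1)^{k-1}}{k}p_k[-F]\,t^k\bigr)=\bigl(\sum_i e_i[E]\,t^i\bigr)\bigl(\sum_j e_j[-F]\,t^j\bigr)$; extracting the coefficient of $t^m$ yields $e_m[E-F]=\sum_{i=0}^m e_i[E]\,e_{m-i}[-F]$. For the last identity I would compare the $e$- and $h$-series: substituting the alphabet $-X$ into the $e$-series and using $p_k[-X]=-p_k[X]$ turns its exponent into $\sum_k\tfrac{(-1)^k}{k}p_k[X]\,t^k$, which is precisely the exponent obtained from the $h$-series after the replacement $t\mapsto -t$. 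Hence $\sum_m e_m[-X]\,t^m=\sum_m h_m[X](-t)^m=\sum_m(-1)^m h_m[X]\,t^m$, and matching coefficients of $t^m$ gives $e_m[-X]=(-1)^m h_m[X]$.

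The only delicate point is the bookkeeping that legitimizes these manipulations as identities of formal power series in the auxiliary variable $t$, together with the derivation of the two exponential formulas from Newton's identities. Since every series involved is degree-graded in the $x_i$ and the exponential formulas for $e_m$ and $h_m$ are entirely standard, I expect this to be routine verification rather than a genuine obstacle; the substantive content is just the additivity and negation rules for $p_k$, which are forced by the definition.
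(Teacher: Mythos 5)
Your proof is correct. Note that the paper offers no proof of this lemma at all: it is stated as a quoted result from Haglund's book, so there is no internal argument to compare yours against. Your route --- defining plethysm through its action on the power sums, observing that $p_k[E+F]=p_k[E]+p_k[F]$ and $p_k[-F]=-p_k[F]$, and then transporting the classical generating functions $\sum_{m\ge 0}e_m t^m=\exp\bigl(\sum_{k\ge 1}\tfrac{(-1)^{k-1}}{k}p_k t^k\bigr)$ and $\sum_{m\ge 0}h_m t^m=\exp\bigl(\sum_{k\ge 1}\tfrac{1}{k}p_k t^k\bigr)$ through that substitution homomorphism --- is exactly the standard derivation in the cited reference, and it cleanly yields all three identities: the first by noting $p_k[x_1+\cdots+x_n]$ is the ordinary power sum, the second by splitting the exponential into a product, and the third by matching the $e$-series at alphabet $-X$ with the $h$-series at $t\mapsto -t$.
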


\begin{lem}[Section 7.6, \cite{RP.Stanley99}]\label{EtoHorHtoE}
Suppose $n\geq 1$. The elementary symmetric functions and the complete symmetric functions have the following relationship:
$$\sum_{i=0}^n(-1)^ie_ih_{n-i}=0.$$
\end{lem}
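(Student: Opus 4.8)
The plan is to prove this classical identity by the generating-function method. First I would introduce the two generating series
$$E(t)=\sum_{n\geq 0}e_n t^n,\qquad H(t)=\sum_{n\geq 0}h_n t^n,$$
and establish their product forms $E(t)=\prod_{i}(1+x_i t)$ and $H(t)=\prod_{i}(1-x_i t)^{-1}$. The first follows directly from the definition $e_n=m_{1^n}$: choosing the term $x_i t$ from distinct factors of $\prod_i(1+x_i t)$ produces exactly the squarefree degree-$n$ monomials, whose sum is $e_n$. The second follows by expanding each geometric factor $(1-x_i t)^{-1}=\sum_{k\geq 0}x_i^k t^k$ and collecting the coefficient of $t^n$, which is $\sum_{\kappa\vdash n}m_\kappa=h_n$ by definition.

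Next I would record the key observation that $E(t)$ and $H(-t)$ are multiplicative inverses. Indeed, since $H(-t)=\prod_i(1+x_i t)^{-1}$, we get
$$E(t)\,H(-t)=\prod_i(1+x_i t)\prod_i(1+x_i t)^{-1}=1.$$
Multiplying the two power series on the left and comparing coefficients of $t^n$, the constant term ($n=0$) equals $1$, while for every $n\geq 1$ the coefficient must vanish, giving
$$\sum_{i=0}^n(-1)^{n-i}e_i h_{n-i}=0.$$
Finally I would multiply through by $(-1)^n$ and use $(-1)^{n}(-1)^{n-i}=(-1)^{i}$ to rewrite this in the claimed form $\sum_{i=0}^n(-1)^i e_i h_{n-i}=0$, which holds for all $n\geq 1$.

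There is no substantial obstacle here; the statement is essentially a one-line consequence of the inverse relation between $E(t)$ and $H(-t)$. The only point meriting a word of care is that the $x_i$ may be infinite in number, so strictly speaking all the manipulations should be read as identities of formal power series in $t$ whose coefficients lie in $\Lambda$. Each such coefficient is a well-defined homogeneous symmetric function, so the product expansions and coefficient extraction are purely formal and no convergence question arises. If one prefers to avoid infinite products altogether, the same computation goes through verbatim in the finite alphabet $(x_1,\dots,x_N)$ for every $N\geq n$, and the resulting identity among symmetric functions is independent of $N$.
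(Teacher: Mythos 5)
Your proof is correct. The paper offers no proof of this lemma at all -- it simply cites it as a known fact from Section 7.6 of Stanley's \emph{Enumerative Combinatorics, Vol.\ 2} -- and your generating-function argument (establishing $E(t)=\prod_i(1+x_it)$, $H(t)=\prod_i(1-x_it)^{-1}$, observing $E(t)H(-t)=1$, and comparing coefficients of $t^n$ with the sign adjustment $(-1)^n(-1)^{n-i}=(-1)^i$) is precisely the standard proof given in that cited reference, so your proposal supplies exactly the argument the paper implicitly relies on.
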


Now let's give a result involving the Vandermonde determinant.
\begin{lem}\label{W-determ-r}
Let $r\geq 1$ and $r\in \mathbb{N}$. Then
\begin{align*}
\det\left(\begin{array}{ccccc}
1 &w_1 &\cdots & w_1^{\ell-2}& w_1^{-r} \\
1 &w_2 &\cdots & w_2^{\ell-2}& w_2^{-r} \\
\vdots &\vdots &\ddots & \vdots& \vdots \\
1 &w_{\ell} &\cdots & w_{\ell}^{\ell-2}& w_{\ell}^{-r}\\
\end{array}\right)=\frac{(-1)^{(\ell-1)^2}s_{\mu}(w_1,...,w_{\ell})\det V(w_1,...,w_{\ell})}{e_{\ell}^r(w_1,...,w_{\ell})},
\end{align*}
where $\mu=(r-1,r-1,...,r-1)$ with length $\ell(\mu)=\ell-1$.
\end{lem}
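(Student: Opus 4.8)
The plan is to recognize the left-hand determinant, which I will call $D$, as a bialternant (ratio-of-alternants) expression for a Schur polynomial, once the negative power in the last column has been cleared. The first step I would take is to factor $w_i^r$ out of the $i$-th row. This removes the exponent $-r$ from the last column and multiplies the determinant by $\prod_{i=1}^{\ell}w_i^r=e_\ell(w_1,\dots,w_\ell)^r$, yielding
$$e_\ell^r\,D=\det\left(\begin{array}{ccccc} w_1^{r} & w_1^{r+1} & \cdots & w_1^{r+\ell-2} & 1\\ \vdots & \vdots & & \vdots & \vdots\\ w_\ell^{r} & w_\ell^{r+1} & \cdots & w_\ell^{r+\ell-2} & 1 \end{array}\right),$$
so that the transformed matrix carries the column exponents $r,r+1,\dots,r+\ell-2,0$.

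Next I would permute the columns into strictly decreasing exponent order $r+\ell-2>\cdots>r>0$. This is simply the reversal of the first $\ell-1$ columns with the last column left fixed, and it contributes the sign $(-1)^{(\ell-1)(\ell-2)/2}$. The reordered matrix is exactly the alternant $a_{\mu+\delta}=\det\bigl(w_i^{\mu_j+\ell-j}\bigr)$ with $\delta=(\ell-1,\dots,1,0)$. Matching $\mu_j+\ell-j$ against the decreasing exponent list forces $\mu_j=r-1$ for $1\le j\le\ell-1$ and $\mu_\ell=0$, that is, $\mu=(r-1,\dots,r-1)$ of length $\ell-1$, which is precisely the partition in the statement.

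Finally, Lemma \ref{SchurDeterm} gives $a_{\mu+\delta}=s_\mu(w_1,\dots,w_\ell)\,a_\delta$, and the stated convention $a_\delta=(-1)^{\ell(\ell-1)/2}\det V(w_1,\dots,w_\ell)$ converts the alternant into the product of a Schur polynomial and a Vandermonde determinant. Collecting the two sign contributions,
$$(-1)^{(\ell-1)(\ell-2)/2}\cdot(-1)^{\ell(\ell-1)/2}=(-1)^{(\ell-1)^2},$$
and then dividing through by $e_\ell^r$ produces the asserted identity.

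The computation is essentially mechanical, so the only delicate point is the sign bookkeeping: correctly reading off $\mu$ from the exponent multiset after the reversal, and combining the column-permutation sign with the Vandermonde-convention sign so that the two exponents add to exactly $(\ell-1)^2$. To guard against an off-by-one error I would check the base case $\ell=2$ directly, where $\mu=(r-1)$, the column reversal is trivial, and the formula reduces to an explicit $2\times 2$ computation, before relying on the general sign count.
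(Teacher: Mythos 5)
Your proposal is correct and follows essentially the same route as the paper's proof: clear the negative exponents by factoring $w_i^r$ from each row, reverse the first $\ell-1$ columns (sign $(-1)^{(\ell-1)(\ell-2)/2}$), identify the resulting alternant $a_{\mu+\delta}$ with $\mu=(r-1,\dots,r-1)$ via Lemma \ref{SchurDeterm}, and convert $a_\delta$ to $\det V$ using the paper's sign convention, with the two signs combining to $(-1)^{(\ell-1)^2}$. Your sign bookkeeping matches the paper's computation exactly.
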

\begin{proof}
By direct computation, the left hand side becomes
\begin{align*}
&(w_1^r\cdots w_{\ell}^r)^{-1}
\det\left(\begin{array}{ccccc}
w_1^r &w_1^{r+1} &\cdots & w_1^{\ell+r-2}& 1 \\
w_2^r &w_2^{r+1} &\cdots & w_2^{\ell+r-2}& 1 \\
\vdots &\vdots &\ddots & \vdots& \vdots \\
w_{\ell}^r &w_{\ell}^{r+1} &\cdots & w_{\ell}^{\ell+r-2}& 1 \\
\end{array}\right)
\\=&(-1)^{\frac{(\ell-1)(\ell-2)}{2}}(w_1^r\cdots w_{\ell}^r)^{-1}a_{(\ell+r-2,\ell+r-3,...,r,0)}(w_1,...,w_{\ell})
\\(\text{by Lemma \ref{SchurDeterm}})\quad =&(-1)^{\frac{(\ell-1)(\ell-2)}{2}}(w_1^r\cdots w_{\ell}^r)^{-1}s_{\mu}(w_1,...,w_{\ell})\cdot a_{(\ell-1,\ell-2,...,1,0)}(w_1,...,w_{\ell})
\\=&\frac{(-1)^{(\ell-1)^2}s_{\mu}(w_1,...,w_{\ell})\det V(w_1,...,w_{\ell})}{e_{\ell}^r(w_1,...,w_{\ell})}.
\end{align*}
This completes the proof.
\end{proof}

\begin{lem}\label{HESlambda}
Let $l\geq 1$ and $i\geq 0$. Let $\lambda=(i,i,...,i)$ with length $\ell(\lambda)=\ell-1$. We have
$$h_i(w_1^{-1},w_2^{-1},...,w_{\ell}^{-1})\cdot e_{\ell}^i(w_1,w_2,...,w_{\ell})=s_{\lambda}(w_1,w_2,...,w_{\ell}).$$
\end{lem}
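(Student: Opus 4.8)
The plan is to reduce everything to the bialternant (classical determinant) formula for Schur functions from Lemma \ref{SchurDeterm} and to exploit how alternants transform under the reciprocal substitution $w_p \mapsto w_p^{-1}$. First I would record the elementary observation that $h_i = s_{(i)}$, i.e.\ the complete homogeneous symmetric function equals the Schur function of the single-row shape $(i)$; this is immediate from the definitions in Subsection~3.1, since $K_{(i)\nu}=1$ for every $\nu\vdash i$. Thus it suffices to prove
$$s_{(i)}(w_1^{-1},\dots,w_{\ell}^{-1})=e_{\ell}(w_1,\dots,w_{\ell})^{-i}\,s_{\lambda}(w_1,\dots,w_{\ell}),$$
where $\lambda=(i^{\ell-1})$ and $e_{\ell}(w_1,\dots,w_\ell)=w_1\cdots w_\ell$.

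Next I would apply Lemma \ref{SchurDeterm} with $n=\ell$ and $\delta=(\ell-1,\ell-2,\dots,1,0)$ to both the numerator and the denominator of $s_{(i)}(w_1^{-1},\dots,w_\ell^{-1})$. In the numerator alternant $a_{(i)+\delta}$ evaluated at the reciprocals, the column exponents are $(i+\ell-1,\ell-2,\ell-3,\dots,1,0)$, so the $(p,q)$ entry is $w_p^{-(i+\ell-1),\,-(\ell-2),\dots}$. I would pull the factor $w_p^{-(i+\ell-1)}$ out of each row $p$, which contributes $e_{\ell}(w_1,\dots,w_\ell)^{-(i+\ell-1)}$, and then reverse the order of the columns, which contributes the sign $(-1)^{\binom{\ell}{2}}$. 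The exponent vector of the resulting alternant is exactly $\lambda+\delta$ with $\lambda=(i^{\ell-1})$, so the numerator equals $(-1)^{\binom{\ell}{2}}\,e_{\ell}(w_1,\dots,w_\ell)^{-(i+\ell-1)}\,a_{\lambda+\delta}(w_1,\dots,w_\ell)$. Performing the identical manipulation on the denominator $a_{\delta}$ (the case $i=0$) yields $(-1)^{\binom{\ell}{2}}\,e_{\ell}(w_1,\dots,w_\ell)^{-(\ell-1)}\,a_{\delta}(w_1,\dots,w_\ell)$.

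Finally I would take the quotient: the sign $(-1)^{\binom{\ell}{2}}$ and all of the monomial prefactors except $e_{\ell}(w_1,\dots,w_\ell)^{-i}$ cancel, leaving $e_{\ell}(w_1,\dots,w_\ell)^{-i}\,a_{\lambda+\delta}(w_1,\dots,w_\ell)/a_{\delta}(w_1,\dots,w_\ell)$, which is $e_{\ell}(w_1,\dots,w_\ell)^{-i}\,s_{\lambda}(w_1,\dots,w_\ell)$ by Lemma \ref{SchurDeterm} again. Multiplying through by $e_{\ell}(w_1,\dots,w_\ell)^{i}$ gives the stated identity.

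There is no conceptual obstacle here; the only point requiring care is the bookkeeping of the column-reversal sign and of the power of $e_{\ell}(w_1,\dots,w_\ell)$ extracted from each alternant. I would emphasize that these factors cancel identically between numerator and denominator, so that the final identity is sign-free and clean. As a remark, this lemma is a special instance of the general reciprocal-complementation rule $s_{\mu}(x_1^{-1},\dots,x_n^{-1})=e_n(x_1,\dots,x_n)^{-\mu_1}s_{\mu^{c}}(x_1,\dots,x_n)$ for Schur functions in $n$ variables, but the direct bialternant computation above is self-contained and relies only on Lemma \ref{SchurDeterm}.
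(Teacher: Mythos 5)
Your proof is correct and is essentially the paper's own argument: both reduce the identity to the bialternant formula of Lemma \ref{SchurDeterm}, factor a monomial out of each row to pass between the variables $w_p$ and their reciprocals, and absorb the column-reversal signs, which cancel between numerator and denominator. The only difference is cosmetic --- you start from $h_i(w_1^{-1},\dots,w_\ell^{-1})=s_{(i)}(w_1^{-1},\dots,w_\ell^{-1})$ and work toward $s_{\lambda}(w_1,\dots,w_\ell)$, while the paper runs the same computation in the opposite direction, using $s_i=h_i$ at the end.
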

\begin{proof}
Let $\delta=(\ell-1,\ell-2,...,0)$.
By the classical definition of Schur functions in Lemma \ref{SchurDeterm} with respect to $\lambda=(i,i,...,i,0)$, we have
\begin{align*}
&s_{\lambda}(w_1,w_2,...,w_{\ell})=\frac{a_{\lambda+\delta}(w_1,w_2,...,w_{\ell})}{a_{\delta}(w_1,w_2,...,w_{\ell})}
\\=&\det\left(\begin{array}{ccccc}
w_1^{i+\ell-1} &w_1^{i+\ell-2} &\cdots & w_1^{i+1}& 1 \\
w_2^{i+\ell-1} &w_2^{i+\ell-2} &\cdots & w_2^{i+1}& 1 \\
\vdots &\vdots &\ddots & \vdots& \vdots \\
w_{\ell}^{i+\ell-1} &w_{\ell}^{i+\ell-2} &\cdots & w_{\ell}^{i+1}& 1 \\
\end{array}\right)
\left(\det\left(\begin{array}{ccccc}
w_1^{\ell-1} &w_1^{\ell-2} &\cdots & w_1& 1 \\
w_2^{\ell-1} &w_2^{\ell-2} &\cdots & w_2 & 1 \\
\vdots &\vdots &\ddots & \vdots& \vdots \\
w_{\ell}^{\ell-1} &w_{\ell}^{\ell-2} &\cdots & w_{\ell}& 1 \\
\end{array}\right)\right)^{-1}
\\=&\frac{(w_1w_2\cdots w_{\ell})^{i+\ell-1}}{(w_1w_2\cdots w_{\ell})^{\ell-1}}
\det\left(\begin{array}{cccc}
1  &\cdots & w_1^{2-\ell}& w_1^{-i-\ell+1} \\
1  &\cdots & w_2^{2-\ell}& w_2^{-i-\ell+1} \\
\vdots  &\ddots & \vdots& \vdots \\
1  &\cdots & w_{\ell}^{2-\ell}& w_{\ell}^{-i-\ell+1} \\
\end{array}\right)
\left(\det\left(\begin{array}{cccc}
1  &\cdots & w_1^{2-\ell}& w_1^{1-\ell} \\
1  &\cdots & w_2^{2-\ell}& w_2^{1-\ell} \\
\vdots  &\ddots & \vdots& \vdots \\
1  &\cdots & w_{\ell}^{2-\ell}& w_{\ell}^{1-\ell} \\
\end{array}\right)\right)^{-1}
\\=&(w_1w_2\cdots w_{\ell})^i
\det\left(\begin{array}{cccc}
(w_1^{-1})^{i+\ell-1}  &\cdots & w_1^{-\ell}& 1 \\
(w_2^{-1})^{i+\ell-1}  &\cdots & w_2^{-\ell}& 1 \\
\vdots  &\ddots & \vdots& \vdots \\
(w_{\ell}^{-1})^{i+\ell-1}  &\cdots & w_{\ell}^{-\ell}& 1 \\
\end{array}\right)
\left(\det\left(\begin{array}{cccc}
(w_1^{-1})^{\ell-1}  &\cdots & w_1^{-\ell}& 1 \\
(w_2^{-1})^{\ell-1}  &\cdots & w_2^{-\ell}& 1 \\
\vdots  &\ddots & \vdots& \vdots \\
(w_{\ell}^{-1})^{\ell-1}  &\cdots & w_{\ell}^{-\ell}& 1 \\
\end{array}\right)\right)^{-1}
\\=&(w_1w_2\cdots w_{\ell})^i\cdot s_{i}(w_1^{-1},w_2^{-1},...,w_{\ell}^{-1})
\\=&e_{\ell}^{i}(w_1,w_2,...,w_{\ell})h_i(w_1^{-1},w_2^{-1},...,w_{\ell}^{-1}).
\end{align*}
The last ``$=$" is due to the fact that $s_i=h_i$.
\end{proof}

We will use several variations of the well-known Vandermonde's identity
$$\sum_{i=0}^{n}\binom{a}{i}\binom{b}{n-i}=\binom{a+b}{n}.$$
Vandermonde's identity can be easily proved by equating coefficients of $x^n$ on both sides of the equation $(1+x)^a\cdot (1+x)^b=(1+x)^{a+b}$.
This trick is sufficient for our purpose.

\begin{lem}\label{CTbinomTT1}
Let $j\leq \ell$ and $j+1\leq r$. We have
$$\sum_{m=0}^{r-1-j}(-1)^{j+r+m+1}\binom{\ell-j}{r-1-m-j}=(-1)^{r-j-1}\binom{\ell-j-1}{r-j-1}.$$
\end{lem}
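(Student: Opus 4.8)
The plan is to reduce the alternating sum to a standard partial sum of binomial coefficients and then evaluate it with the coefficient-extraction trick advertised just before the statement. First I would abbreviate $a=\ell-j$ and $s=r-1-j$; the hypotheses $j\le\ell$ and $j+1\le r$ guarantee $a\ge 0$ and $s\ge 0$, and the upper argument of the binomial in the summand becomes $r-1-m-j=s-m$. Pulling the constant sign out, the left-hand side is
$$(-1)^{j+r+1}\sum_{m=0}^{s}(-1)^{m}\binom{a}{s-m}.$$

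Next I would reverse the order of summation by setting $k=s-m$. Since $(-1)^{m}=(-1)^{s-k}=(-1)^{s}(-1)^{k}$, the sum becomes $(-1)^{s}\sum_{k=0}^{s}(-1)^{k}\binom{a}{k}$, so the overall prefactor is $(-1)^{j+r+1+s}$. The one point needing a moment's care is the parity of this exponent: because $s=r-1-j$ we have $j+r+1+s=2r$, which is even, and hence the prefactor equals $+1$. Thus the left-hand side collapses to the clean alternating partial sum $\sum_{k=0}^{s}(-1)^{k}\binom{a}{k}$.

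Finally I would evaluate this partial sum by coefficient extraction. Writing $(-1)^{k}\binom{a}{k}=[x^{k}](1-x)^{a}$ and using that multiplication by $1/(1-x)$ accumulates partial sums of coefficients, I obtain
$$\sum_{k=0}^{s}(-1)^{k}\binom{a}{k}=[x^{s}]\frac{(1-x)^{a}}{1-x}=[x^{s}](1-x)^{a-1}=(-1)^{s}\binom{a-1}{s}.$$
Substituting back $a=\ell-j$ and $s=r-1-j$ recovers exactly $(-1)^{r-j-1}\binom{\ell-j-1}{r-j-1}$, the right-hand side. I do not anticipate any genuine obstacle: the only thing to watch is the sign bookkeeping in the reindexing step (the parity computation $j+r+1+s=2r$), after which the identity is a one-line coefficient extraction. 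A pleasant feature of this approach is that it handles the boundary case $a=0$ (i.e.\ $j=\ell$) uniformly, since there $(1-x)^{a-1}=(1-x)^{-1}$ and the extraction yields $(-1)^{s}\binom{-1}{s}=1$, consistent with interpreting $\binom{\ell-j-1}{r-j-1}$ via generalized (Newton) binomial coefficients.
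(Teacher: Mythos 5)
Your proof is correct, and it is essentially the paper's own argument: the paper likewise equates coefficients of $x^{r-j-1}$ in $(1+x)^{\ell-j}\cdot\frac{1}{1+x}=(1+x)^{\ell-j-1}$, which is exactly your extraction $[x^{s}]\frac{(1-x)^{a}}{1-x}=[x^{s}](1-x)^{a-1}$ up to the substitution $x\mapsto -x$ and your preliminary reindexing that absorbs the sign $(-1)^{j+r+1+s}=(-1)^{2r}=1$. The only difference is cosmetic bookkeeping, so there is nothing further to reconcile.
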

\begin{lem}\label{CTbinomTT3}
Let $j\leq \ell$, $i+j\leq r-1$ and $\ell\geq 0$. We have
$$\sum_{m=i}^{r-1-j}(-1)^{r+m+i+j}\binom{\ell-j}{r-1-m-j}\binom{m+l}{i+\ell}=(-1)^{i-1}\binom{r-1}{i+j}.$$
\end{lem}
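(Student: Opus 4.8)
The plan is to reduce this identity to the generating-function incarnation of Vandermonde's identity recalled just before Lemma~\ref{CTbinomTT1}, namely reading off a coefficient in $(1+x)^{a}(1+x)^{b}=(1+x)^{a+b}$. All the genuine content is a reindexing that turns the product of the two binomial factors into a single convolution, together with a parity bookkeeping of the overall sign. (Here I read the upper entry of the second factor as $m+\ell$, the $l$ appearing in the statement being $\ell$.)

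First I would shift the summation variable by setting $m=k+i$, so that the range $i\le m\le r-1-j$ corresponds to $0\le k\le N$, where $N:=r-1-i-j\ge 0$ by the hypothesis $i+j\le r-1$. Under this substitution the factor $\binom{\ell-j}{r-1-m-j}$ becomes $\binom{\ell-j}{N-k}$, the factor $\binom{m+\ell}{i+\ell}$ becomes $\binom{k+i+\ell}{k}$ (using $(m+\ell)-(i+\ell)=m-i=k$), and the sign $(-1)^{r+m+i+j}$ becomes $(-1)^{r+j+k}$ because it picks up the even term $2i$. Pulling the constant sign out front, the left-hand side equals $(-1)^{r+j}\sum_{k=0}^{N}(-1)^{k}\binom{\ell-j}{N-k}\binom{k+i+\ell}{k}$.

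Next I would apply upper negation $(-1)^{k}\binom{k+i+\ell}{k}=\binom{-(i+\ell+1)}{k}$ to recognise the remaining sum as the coefficient of $x^{N}$ in $(1+x)^{\ell-j}(1+x)^{-(i+\ell+1)}=(1+x)^{-(i+j+1)}$; the sum may be taken over all $0\le k\le N$ since $\binom{\ell-j}{N-k}$ vanishes outside the relevant range, so the finite convolution genuinely computes this coefficient. A second application of upper negation gives $\binom{-(i+j+1)}{N}=(-1)^{N}\binom{i+j+N}{N}=(-1)^{N}\binom{r-1}{i+j}$, where I use $i+j+N=r-1$ together with the symmetry $\binom{r-1}{N}=\binom{r-1}{i+j}$.

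Finally I would reassemble the signs: the left-hand side equals $(-1)^{r+j+N}\binom{r-1}{i+j}$, and since $r+j+N=2r-1-i\equiv i-1\pmod 2$, this is exactly $(-1)^{i-1}\binom{r-1}{i+j}$, as claimed. I do not expect any substantive obstacle; the only steps requiring care are the parity computation at the end and the elementary observation $(m+\ell)-(i+\ell)=m-i$, which is precisely what keeps the second binomial factor at the small lower index $k$ needed for the convolution to line up with Vandermonde's identity.
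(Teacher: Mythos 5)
Your proof is correct and takes essentially the same route as the paper: both extract the coefficient of $x^{r-1-i-j}$ from the identity $(1+x)^{\ell-j}\cdot(1+x)^{-(i+\ell+1)}=(1+x)^{-(i+j+1)}$, the only difference being that you carry out the reindexing ($m=k+i$) and the upper-negation steps explicitly, where the paper's sketch writes the two factors directly as series in $(-x)$ (with what appear to be typos in its binomial lower indices, $\binom{i+\ell+k}{i+j}$ and $\binom{i+j+k}{i+\ell}$, which should read $\binom{i+\ell+k}{i+\ell}$ and $\binom{i+j+k}{i+j}$). There is no gap in your argument.
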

\begin{lem}\label{CTbinomTT2}
Let $j\leq \ell$, $j+1\leq r$ and $\ell\geq 0$. We have
$$\sum_{m=0}^{r-1-j}(-1)^{j+r+m}\binom{\ell-j}{r-1-m-j}\binom{m+\ell}{\ell}=-\binom{r-1}{j}.$$
\end{lem}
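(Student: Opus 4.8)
The plan is to collapse the sum into a single application of Vandermonde's identity by the standard ``upper negation'' trick, exactly in the spirit of the coefficient-extraction method advertised just before Lemma \ref{CTbinomTT1}. First I would pull the $m$-independent sign out front, writing the left-hand side as $(-1)^{j+r}\sum_{m=0}^{r-1-j}(-1)^{m}\binom{\ell-j}{r-1-m-j}\binom{m+\ell}{\ell}$, so that only the factor $(-1)^m\binom{m+\ell}{\ell}$ carries the alternating sign.

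The key observation is the negation identity $(-1)^m\binom{m+\ell}{\ell}=\binom{-\ell-1}{m}$, which follows from $\binom{-\ell-1}{m}=(-1)^m\binom{\ell+m}{m}=(-1)^m\binom{\ell+m}{\ell}$ via the defining polynomial expression $\binom{a}{m}=a(a-1)\cdots(a-m+1)/m!$. Using it, the remaining sum becomes $\sum_{m\geq 0}\binom{-\ell-1}{m}\binom{\ell-j}{(r-1-j)-m}$, where I may let $m$ range over all of $\mathbb{N}$ because the hypotheses $j\leq\ell$ and $j+1\leq r$ guarantee $r-1-j\geq 0$ while $\binom{\ell-j}{(r-1-j)-m}$ vanishes for $m>r-1-j$; hence the stated range $0\leq m\leq r-1-j$ is precisely the support of the summand.

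Next I would evaluate this convolution by equating coefficients of $x^{r-1-j}$ on the two sides of $(1+x)^{-\ell-1}(1+x)^{\ell-j}=(1+x)^{-j-1}$, the exact generating-function trick described above Lemma \ref{CTbinomTT1}; this yields $\binom{-j-1}{r-1-j}$. A second application of upper negation turns this into $(-1)^{r-1-j}\binom{(j+1)+(r-1-j)-1}{r-1-j}=(-1)^{r-1-j}\binom{r-1}{j}$, using $\binom{r-1}{r-1-j}=\binom{r-1}{j}$.

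Finally I would collect the two signs: $(-1)^{j+r}\cdot(-1)^{r-1-j}=(-1)^{2r-1}=-1$, so the whole expression equals $-\binom{r-1}{j}$, as claimed. There is no genuine obstacle here; the only thing to watch is the bookkeeping of the two sign flips and the confirmation that extending the summation to all $m\geq 0$ (as the Vandermonde convolution requires) introduces no spurious terms, which is exactly what the support analysis above establishes.
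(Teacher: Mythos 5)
Your proof is correct and is essentially the paper's own argument: the paper omits this proof, noting only that it follows ``in a similar way'' to Lemmas \ref{CTbinomTT1} and \ref{CTbinomTT3}, i.e.\ by equating coefficients of $x^{r-1-j}$ in the identity $(1+x)^{\ell-j}\cdot(1+x)^{-\ell-1}=(1+x)^{-j-1}$, which is exactly the convolution you evaluate. Your upper-negation steps ($(-1)^m\binom{m+\ell}{\ell}=\binom{-\ell-1}{m}$ and $\binom{-j-1}{r-1-j}=(-1)^{r-1-j}\binom{r-1}{j}$) are just a notational repackaging of that same coefficient-extraction computation, with the support analysis and sign bookkeeping carried out correctly.
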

\begin{proof}[Sketched proofs of Lemmas \ref{CTbinomTT1}, \ref{CTbinomTT3} and \ref{CTbinomTT2}]
Lemma \ref{CTbinomTT1} follows by equating the coefficients of $x^{r-j-1}$ on
both sides of the identity
\begin{align*}
\sum_{k=0}^{\ell-j}\binom{\ell-j}{k}x^k \cdot \sum_{k\geq 0}(-x)^k =\frac{(1+x)^{\ell-j}}{(1+x)} =(1+x)^{\ell-j-1}=\sum_{k=0}^{\ell-j-1}\binom{\ell-j-1}{k}x^k.
\end{align*}
Lemma \ref{CTbinomTT3} follows by equating the coefficients of $x^{r-1-i-j}$ on
both sides of the identity
\begin{small}
\begin{align*}
\sum_{k\ge 0} \binom{i+\ell+k}{i+j} (-x)^k \cdot \sum_{k=0}^{\ell-j} \binom{\ell-j}{k} (-x)^k =\frac{(1+x)^{\ell-j}}{ (1+x)^{i+\ell+1}} =\frac{1}{(1+x)^{i+j+1}}=\sum_{k\ge 0} \binom{i+j+k}{i+\ell}(-x)^k.
\end{align*}
\end{small}
Lemma \ref{CTbinomTT2} follows in a similar way, and we omit the proof.
\end{proof}

\subsection{Lattice Paths}\label{LatticePath41}

In this subsection, we focus on the counting of lattice paths from $(0,0)$ to $(m,n)$ that \textsf{never go above} a given path $P$ by using steps $N=(0,1)$ and $E=(1,0)$. For example, let's flip the paths discussed in Section \ref{YTWallsToPath}. Then $\overline{f}_m(n)$ counts the number of lattice paths from $(0,0)$ to $(mn-1,mn)$ that never go above the path $(N^mE^m)^{n-1}N^{m}E^{m-1}$. We use the convention $\overline{f}_m(0)=1$.
When $m=1$, $\sum_{n\geq 0}\overline{f}_1(n)x^n=C(x)$ is the Catalan generating function.

Bonin, de Mier and Noy (\cite{JBonin03}) made a connection between lattice paths and matroids. They defined the \emph{Tutte polynomial} of a lattice path
$P$ as follows.
$$t(P;z,y)=\sum_{\sigma: \text{not go above }P}z^{i(\sigma)}y^{e(\sigma)},$$
where the sum ranges over all lattice paths $\sigma$ that never go above $P$, $i(\sigma)$ is the number of common $N$ steps of $\sigma$ and $P$, and
$e(\sigma)$ is the number of common $E$ steps of $\sigma$ and $P$ before the first $N$ step.
In particular, $t(P;1,1)$ is the number of all paths that never go above $P$.

We need the following result, which was first obtained from Matroid Theory.
\begin{lem}[\cite{JBonin03}]\label{TuttePloy-NE}
Let $PN$ be the path obtained from $P$ by appending a $N$ step, and $PE$ be the path obtained similarly. Then we have
\begin{align*}
&t(PN;z,y)=z\cdot t(P;z,y);
\\& t(PE;z,y)=\frac{z}{z-1}\cdot t(P;z,y)+\left(y-\frac{z}{z-1}\right)\cdot t(P;1,y).
\end{align*}
\end{lem}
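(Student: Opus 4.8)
The plan is to prove the two recursions directly from the defining expansion $t(P;z,y)=\sum_{\sigma}z^{i(\sigma)}y^{e(\sigma)}$, by constructing explicit bijections between the paths counted on the two sides and tracking how the statistics $i$ and $e$ change when a single step is appended to the boundary $P$. Conceptually these are the effects on the Tutte polynomial of the lattice path matroid $M[P]$ of a one-element extension, and I will invoke that matroid structure only to justify the transitions of the activity statistics; the combinatorics can otherwise be phrased entirely in terms of paths.

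For the $N$-recursion I would argue that appending an $N$ step is a ``coloop move''. Every path $\tau$ that never goes above $PN$ must use an $N$ step as its last step, so deleting that step gives a bijection $\tau\mapsto\sigma$ onto the paths that never go above $P$. This modification takes place at the top-right corner and leaves the initial segment untouched, so $e(\tau)=e(\sigma)$, while the new top $N$ step is necessarily a common $N$ step of $\tau$ and $PN$, so $i(\tau)=i(\sigma)+1$. Summing $z^{i(\sigma)+1}y^{e(\sigma)}$ over all such $\sigma$ gives $t(PN;z,y)=z\,t(P;z,y)$.

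For the $E$-recursion I would first record the algebraic identity
\[ \frac{z}{z-1}\,t(P;z,y)+\Bigl(y-\frac{z}{z-1}\Bigr)t(P;1,y)=\sum_{\sigma}\bigl(y+z+z^{2}+\cdots+z^{i(\sigma)}\bigr)y^{e(\sigma)}, \]
which follows from $z+z^{2}+\cdots+z^{i}=z(z^{i}-1)/(z-1)$ together with $t(P;1,y)=\sum_{\sigma}y^{e(\sigma)}$. It therefore suffices to show that the right-hand side equals $t(PE;z,y)$, and I would do this bijectively by classifying each path $\tau$ that never goes above $PE$ by its first step. If $\tau$ begins with an $E$ step, deleting that step yields a path $\sigma$ below $P$ with $i(\tau)=0$ and $e(\tau)=e(\sigma)+1$; these paths contribute exactly the terms $y\cdot y^{e(\sigma)}$. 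If $\tau$ begins with an $N$ step (which forces $e(\tau)=0$ and $i(\tau)\geq 1$), I would delete the $E$ step immediately following the initial run of $N$ steps and record $s=i(\tau)$, obtaining a path $\sigma\leq P$ with $e(\sigma)=0$ and $1\le s\le i(\sigma)$; these contribute the terms $z^{s}y^{e(\sigma)}$ for $s=1,\dots,i(\sigma)$. Re-summing the geometric series then returns the stated closed form.

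The hard part will be the activity bookkeeping in the $E$-case: one must verify that deleting the $E$ step after the initial $N$-run really lands among the paths below $P$ (i.e. does not push any $N$ step above the shortened boundary), that this map is a genuine bijection onto the pairs $(\sigma,s)$, and, most delicately, that the internal statistic is realized exactly once at each level $s\in\{1,\dots,i(\sigma)\}$ while the external statistic is controlled solely by the leading $E$ steps. This is precisely the point at which the identification of $i$ and $e$ with internal and external activity of $M[P]$, and the deletion--contraction behaviour of the appended element (a coloop for $PN$, a non-loop non-coloop for $PE$), makes the transitions transparent; I would use that correspondence, rather than a bare hand-to-hand matching, to certify the level-by-level realization of the internal activity when $P$ is not the highest path and coloops are present.
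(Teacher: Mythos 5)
Note first that the paper itself does not prove this lemma: it quotes it from \cite{JBonin03}, where both identities are matroid facts (appending $N$ adds a coloop to the lattice path matroid $M[P]$, appending $E$ is a free extension, and these operations have known effects on the Tutte polynomial), and the paper explicitly invites the reader to find a combinatorial proof. So your bijective attempt is in the invited spirit, and parts of it are fine: the $N$-recursion argument is correct (every path below $PN$ ends in an $N$ step, that step is shared with $PN$, and deleting it is a bijection preserving $e$ and increasing $i$ by one), and your algebraic reduction of the $E$-recursion to the identity $t(PE;z,y)=\sum_{\sigma\le P}\bigl(y+z+\cdots+z^{i(\sigma)}\bigr)y^{e(\sigma)}$ is exactly the right target (here $e(\sigma)$ must be read, as you implicitly do, as the number of $E$ steps of $\sigma$ preceding its first $N$ step).

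However, your decomposition of the paths below $PE$ by their first step is genuinely wrong, not just incompletely justified. Take $P=NENE$, so $PE=NENEE$, and $\tau=ENNEE$. This $\tau$ stays below $PE$ and begins with $E$, yet $i(\tau)=1$: its second $N$ step, the edge from $(1,1)$ to $(1,2)$, is also the second $N$ step of $PE$. So ``begins with $E$ implies $i(\tau)=0$'' fails, and worse, deleting the first $E$ step of $\tau$ yields $NNEE$, which goes above $P$, so the map does not even land in the set of paths below $P$. Your $N$-case map breaks on the same example: $\tau=NENEE$ begins with $N$, and deleting the $E$ step after its initial $N$-run again produces $NNEE\not\le P$. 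The failure is structural, not cosmetic: any $\sigma\le P$ with $i(\sigma)>0$ and $e(\sigma)>0$ (here $\sigma=ENNE$, with $i=1$, $e=1$) must contribute mixed terms $z^{s}y^{e(\sigma)}$ to $t(PE)$, but in your scheme every $z$-power comes from an $N$-starting path, which has $e=0$; concretely your two classes sum to $z^{2}+2z+2y+2y^{2}+y^{3}$, whereas $t(NENEE;z,y)=z^{2}+2z+zy+2y+2y^{2}+y^{3}$, so the term $zy$ is irrecoverably lost. The deletion rule must be governed by the shared $N$ steps, not by the first step: if $i(\tau)=0$, delete the initial $E$ step (such a $\tau$ necessarily starts with $E$); if $i(\tau)=s\ge 1$, delete the $E$ step immediately following the \emph{last} $N$ step that $\tau$ shares with $PE$ (one checks this step exists and that the resulting $\sigma$ lies below $P$). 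This gives a bijection $\tau\mapsto(\sigma,s)$ with $0\le s\le i(\sigma)$, $i(\tau)=s$, and $e(\tau)=e(\sigma)$ for $s\ge 1$ while $e(\tau)=e(\sigma)+1$ for $s=0$, which is exactly what the identity requires. Alternatively, abandon the bijection and argue as in \cite{JBonin03}: the new element of $M[PE]$ is a free extension point, and the corank--nullity expansion of the Tutte polynomial of a free extension yields the $E$-recursion directly; but that route replaces, rather than repairs, your classification by first step.
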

Readers are invited to find a combinatorial proof of the above lemma.

Let $q(n)$ be the number of lattice paths from $(0,0)$ to $(\ell n,kn)$ that never go above the path $(N^kE^{\ell})^n$, where
 $k,\ell\in\mathbb{P}$ and $n\in \mathbb{N}$. Using Lemma \ref{TuttePloy-NE}, de Mier and Noy obtained a nice expression of the generating function
$Q(x)=\sum_{n\geq 0}q(n)x^n$. See \cite[Theorem 1]{AnnadeMier} or Corollary \ref{Annader-Mie} below. For completeness, we briefly describe their proof process as follows.

Let $P_n=(N^kE^{\ell})^n$ and $A_n=A_n(z,y)=t(P_n;z,y)$, where $A_0=1$. Define the operator $\Phi$ by
$$\Phi A(z,y)=\frac{z}{z-1}A(z,y)+\left(y-\frac{z}{z-1}\right)A(1,y).$$
Then Lemma \ref{TuttePloy-NE} gives $A_{n+1}=\Phi^{\ell}(z^kA_n)$.
For each $n\in \mathbb{N}$ and $1\leq i\leq \ell$, define the polynomials
$$B_{i,n}=\Phi^i\left(z^kA_n(z,y)\right),\ \ C_{i,n}=B_{i,n}(1,y),$$
where we set $C_{0,n}(y)=A_n(1,y)$.
Thus we have $B_{\ell,n}=A_{n+1}$ and $C_{0,n}(1)=A_n(1,1)$.
Denote their generating functions by
$$A=A(x)=\sum_{n\geq 0}A_nx^n,\ \ \ C_i=C_i(x)=\sum_{n\geq 0}C_{i,n}x^n,\ \ \ 0\leq i\leq \ell.$$

Based on the above definitions, we have
\begin{align}
&B_{1,n}=\frac{z}{z-1}z^kA_n+\left(y-\frac{z}{z-1}\right)C_{0,n},\nonumber
\\&B_{2,n}=\frac{z}{z-1}B_{1,n}+\left(y-\frac{z}{z-1}\right)C_{1,n},\nonumber
\\&...\label{B1B2B3C0C1C2}
\\&B_{\ell,n}=\frac{z}{z-1}B_{\ell-1,n}+\left(y-\frac{z}{z-1}\right)C_{\ell-1,n},\nonumber
\\&A_{n+1}=B_{\ell,n}.\nonumber
\end{align}

In the above equations, we start from the last equation and repeatedly replace $B_{i,n}$ from the previous equation. We have
$$\sum_{n\geq 0} A_{n+1}x^n=\frac{A-1}{x}=\frac{z^{k+\ell}}{(z-1)^{\ell}}A+(yz-y-z)\sum_{i=1}^{\ell}\frac{z^{i-1}}{(z-1)^i}C_{\ell-i}.$$
Simplifying and setting $y=1$ give
\begin{equation}\label{w1w2w3-wl}
A\left((z-1)^{\ell}-xz^{k+\ell}\right)=(z-1)^{\ell}-x\sum_{i=1}^{\ell}z^{i+1}(z-1)^{\ell-i}C_{\ell-i}.
\end{equation}
This functional equation can be solved by the well-known kernel method.

The factor $\left((z-1)^{\ell}-xz^{k+\ell}\right)$ on the left hand side is called the kennel.
As a polynomial in $z$, it has $k+\ell$ roots. By Puiseus's theorem, \cite[Chapter 6]{RP.Stanley99}, these roots
can be treated as elements in the field of fractional Laurent series
$$\mathbb{C}^{\text{fra}}((x))=\Big\{\sum_{n\geq N}a_{n,M}x^{n/M}: a_{n,M}\in \mathbb{C},  N\in\mathbb{Z},\ M\in \mathbb{P}   \Big\}.$$
Moreover, by \cite[Proposition 6.1.8]{RP.Stanley99}, exactly $\ell$ roots of the kernel are fractional power series.
Denote them by $w_1(x),w_2(x)$,$...,w_{\ell}(x)$. Then $w_i(0)=1$ for $1\leq i \leq \ell$.

The substitution $z=w_j$ in Equation \eqref{w1w2w3-wl} is valid for $1\leq j\leq \ell$. This gives
a system of $\ell$ linear equations in $C_j$:
\begin{equation}\label{system-equat}
\sum_{i=1}^{\ell}w_j^{i-1}(w_j-1)^{\ell-i}xC_{\ell-i}=(w_j-1)^{\ell},\ \ 1\leq j\leq \ell.
\end{equation}
Because $Q(x)=C_0=\sum_{n\geq 0}A_n(1,1)x^n$, de Mier and Noy only obtained a nice expression for $C_0$ by using Lagrange's interpolation formula and an identity in \cite[Exercise 7.4]{RP.Stanley99}. We find nice expressions of $C_r$ for all $r$.

Our intermediate result is the following.
\begin{thm}
Let $\ell\in\mathbb{P}$ and $1\leq r\leq \ell$.
Let $\overline{w}_i=\frac{w_i-1}{w_i}$ for $1\leq i\leq \ell$. Then we have
\begin{align}\label{xC1-rw22}
xC_{\ell-r}=\sum_{m=0}^{r-1}e_{r-1-m}(\overline{w_1},\cdots,\overline{w_\ell})
\left(\sum_{i=0}^m(-1)^{r+m+i}\binom{m+\ell}{i+\ell}
\frac{s_{\mu}(w_1,...,w_{\ell})}{e_{\ell}^{i}}+(-1)^{r+m+1}e_{\ell}\right),
\end{align}
where $e_{\ell}=e_\ell(w_1,...,w_{\ell})$ and $\mu=(i,i,...,i)$ with length $\ell(\mu)=\ell-1$.
\end{thm}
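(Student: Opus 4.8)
The plan is to read \eqref{system-equat} as an inhomogeneous linear system of $\ell$ equations in the $\ell$ unknowns $xC_0,\dots,xC_{\ell-1}$ and to solve it by Cramer's rule. Writing $u_i=xC_{\ell-i}$, the coefficient matrix $M$ has $(j,i)$-entry $w_j^{i-1}(w_j-1)^{\ell-i}$ and the right-hand side is $\big((w_j-1)^\ell\big)_j$. First I would factor $(w_j-1)^{\ell-1}$ out of the $j$-th row: this rewrites the entry as $(w_j-1)^{\ell-1}v_j^{\,i-1}$ with $v_j:=\tfrac{w_j}{w_j-1}=1/\overline{w_j}$, exhibiting the coefficient matrix as a Vandermonde matrix in $v_1,\dots,v_\ell$, while the right-hand side becomes $w_j-1=1/(v_j-1)$. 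Equivalently, $xC_{\ell-r}$ is the coefficient of $t^{r-1}$ in the unique polynomial of degree $\le\ell-1$ interpolating the values $1/(v_j-1)$ at the nodes $v_j$; this already pins the answer down and furnishes a clean cross-check (see the final paragraph).

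To land \emph{exactly} on the stated form I would instead evaluate the Cramer numerator --- the matrix $M$ with its $r$-th column replaced by $\big((w_j-1)^\ell\big)_j$ --- by expanding the factors $(w_j-1)^{\ell-i}$ and $(w_j-1)^\ell$ with the binomial theorem. After pulling the common row factors out, the numerator becomes a combination, weighted by binomial coefficients, of Vandermonde-type determinants whose last column is a pure negative power $w_j^{-i}$ and whose remaining columns are $1,w_j,\dots,w_j^{\ell-2}$; each such determinant is exactly the object evaluated in Lemma \ref{W-determ-r}. Its value carries a factor $\det V(w)$ that cancels the denominator $\det M=(-1)^{\binom{\ell}{2}}\det V(w)$, and a Schur function which, after absorbing the powers of $e_\ell$ by Lemma \ref{HESlambda}, becomes the factor $s_\mu/e_\ell^{\,i}=h_i(w_1^{-1},\dots,w_\ell^{-1})$ appearing in the theorem, with $\mu=(i,\dots,i)$ of length $\ell-1$. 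The leftover common factors, once symmetrized over $j$, assemble into the outer coefficient $e_{r-1-m}(\overline{w_1},\dots,\overline{w_\ell})$ and produce the sum over $m$.

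The binomial weights $\binom{m+\ell}{i+\ell}$ and the alternating signs $(-1)^{r+m+i}$ are precisely the residue of collecting the several binomial expansions across the replaced column and the factored rows: here the three summation identities in Lemmas \ref{CTbinomTT1}, \ref{CTbinomTT3} and \ref{CTbinomTT2} are exactly what fold the resulting raw triple sum into the stated double sum. The isolated summand $(-1)^{r+m+1}e_\ell$ records the top, degree-$(\ell-1)$ contribution of the replaced column, which produces an ordinary Vandermonde with no missing power and hence carries no Schur-function factor.

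The conceptual steps (Cramer's rule and the determinant evaluation via Lemma \ref{W-determ-r}) are routine; the genuine difficulty will be the sign- and index-bookkeeping needed to make the binomial sums fall under the exact hypotheses of Lemmas \ref{CTbinomTT1}--\ref{CTbinomTT2} (e.g.\ $j\le\ell$ and $i+j\le r-1$) and to confirm that the extracted common factors symmetrize to $e_{r-1-m}(\overline{w})$ rather than to a neighbouring symmetric function. Throughout, the interpolation viewpoint of the first paragraph supplies a decisive check: the whole double sum must collapse to the compact identity $xC_{\ell-r}=-1+e_\ell\sum_{s=0}^{r-1}(-1)^s e_s(\overline{w_1},\dots,\overline{w_\ell})$, which one obtains directly from the Lagrange interpolant of $1/(t-1)$ and which is invaluable for catching a stray sign.
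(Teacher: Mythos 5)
Your first paragraph is in fact the paper's own route: the paper reads \eqref{system-equat} as saying that the polynomial $T(X)=\sum_{i=0}^{\ell-1}(xC_{\ell-1-i})X^i$ takes the value $w_j-1$ at the nodes $X_j=w_j/(w_j-1)$ (your $v_j$), and extracts the coefficient of $X^{r-1}$ from the Lagrange interpolation formula. The gap lies in the route you actually commit to for reaching the stated form. The Cramer numerator is the determinant of a matrix whose entries $w_j^{i-1}(w_j-1)^{\ell-i}$ and $(w_j-1)^{\ell}$ are \emph{polynomials} in the $w_j$; any binomial/multilinear expansion of it therefore produces only alternants in nonnegative powers of the $w_j$, i.e.\ general Schur functions $s_\lambda$ times the Vandermonde, weighted by products of binomial coefficients. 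It does not produce, as claimed, "common row factors" or "determinants whose last column is a pure negative power $w_j^{-i}$ and whose remaining columns are $1,w_j,\dots,w_j^{\ell-2}$", so Lemma \ref{W-determ-r} has nothing to bite on; and there is no "symmetrization over $j$" inside a determinant that could assemble the outer coefficients $e_{r-1-m}(\overline{w_1},\dots,\overline{w_\ell})$ (note these are not even polynomial in the $w_j$, having $e_\ell$ in their denominators, whereas every term of your expansion is polynomial). The missing structural idea is exactly the paper's key step: after Lagrange extraction, the coefficient of $X^{r-1}$ carries the cofactor $e_{r-1}(\{\overline{w}_k\}_{k\neq j})$, and the plethystic separation
$$e_{r-1}[\overline{w}-\overline{w}_j]=\sum_{m=0}^{r-1}e_{r-1-m}[\overline{w}]\,(-1)^m\,\overline{w}_j^{\,m},\qquad \overline{w}_j^{\,m}=\frac{(w_j-1)^m}{w_j^{\,m}},$$
is what simultaneously creates the factors $e_{r-1-m}[\overline{w}]$ and injects the negative powers of $w_j$ into the single last column $g(w_j)=e_{r-1}[\overline{w}-\overline{w}_j]\,(w_j-1)^\ell/w_j$ of a determinant whose other columns are $1,w_j,\dots,w_j^{\ell-2}$ --- the only place Lemma \ref{W-determ-r} is applied. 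Without this step, a raw Cramer expansion drives you instead toward the hook-Schur formula of Section 5.2 (Theorem \ref{SFdetermidentify}), not toward \eqref{xC1-rw22}. A further sign your bookkeeping is aimed at the wrong target: Lemmas \ref{CTbinomTT1}, \ref{CTbinomTT3}, \ref{CTbinomTT2} and \ref{HESlambda} are not used in the paper's proof of this theorem at all; they serve the \emph{next} step, collapsing \eqref{xC1-rw22} into Theorem \ref{MainResinIntrou}, and being binomial-coefficient identities they cannot perform the regrouping of general Schur functions your expansion would require.

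On the positive side, your "decisive check" is correct, and is stronger than you use it for: since $(X-1)T(X)-1$ vanishes at every $X_j$, one has $(X-1)T(X)-1=xC_0\prod_j(X-X_j)$ with $xC_0=(-1)^{\ell+1}\prod_j(w_j-1)$, and extracting the coefficient of $X^{r-1}$ gives directly
$$xC_{\ell-r}=-1+e_\ell(w_1,\dots,w_\ell)\sum_{s=0}^{r-1}(-1)^s e_s(\overline{w_1},\dots,\overline{w_\ell}).$$
This compact identity could anchor an honest alternative proof: establish it first, then show that the right-hand side of \eqref{xC1-rw22} collapses to it --- that is where the binomial lemmas and Lemma \ref{HESlambda} would legitimately enter, run in the same direction as the paper's proof of Theorem \ref{MainResinIntrou}. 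But as the proposal stands, the main argument does not reach the stated formula.
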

\begin{proof}
By the system \eqref{system-equat}, we have
\begin{equation}\label{ref-system-equat}
\sum_{i=0}^{\ell-1}\left(\frac{w_i}{w_j-1}\right)^ixC_{\ell-i-1}=w_j-1,\ \ 1\leq j\leq \ell.
\end{equation}
The left side of equations \eqref{ref-system-equat} can be regarded as the result of evaluating the polynomial
$T(X)=\sum_{i=0}^{\ell-1}(xC_{\ell-i-1})X^i$ of degree $\ell-1$ at $X=X_j=\frac{w_j}{w_j-1}$ for $1\leq j\leq \ell$.
That is, $T(X_j)=w_j-1$ for $1\leq j\leq \ell$.
Then using Lagrange's interpolation formulas, we have
$$T(X)=\sum_{j=1}^{\ell}\prod_{i\neq j}\frac{X-X_i}{X_j-X_i}T(X_j).$$
This is the same as in \cite{AnnadeMier} up to here.

Now consider the coefficient of $X^{r-1}$ in $T(X)$. We obtain
\begin{align*}
xC_{\ell-r}&=\sum_{j=1}^{\ell}\prod_{i\neq j}\frac{1}{X_j-X_i}\left((-1)^{\ell-r}\sum_{1\leq k_1<k_2<\cdots <k_{r-1}\leq \ell\atop k_1,...,k_{r-1}\neq j}\frac{X_1X_2\cdots X_{\ell}}{X_{k_1}\cdots X_{k_{r-1}}X_j}\right)T(X_j)
\\&=(-1)^{\ell-r}\frac{w_1w_2\cdots w_{\ell}}{(w_1-1)\cdots (w_{\ell}-1)}\sum_{j=1}^{\ell}\prod_{i\neq j}\frac{(w_j-1)(w_i-1)}{w_i-w_j}\cdot
\\&\ \ \cdot\left(\sum_{1\leq k_1<k_2<\cdots <k_{r-1}\leq \ell\atop k_1,...,k_{r-1}\neq j}\frac{(w_{k_1}-1)\cdots (w_{k_{r-1}}-1)(w_j-1)}{w_{k_1}w_{k_2}\cdots w_{k_{r-1}}w_j}\right)(w_j-1)
\\&=(-1)^{\ell-r}w_1\cdots w_{\ell}\sum_{j=1}^{\ell}\prod_{i\neq j}\frac{-1}{w_j-w_i}\cdot\left(\sum_{1\leq k_1<\cdots <k_{r-1}\leq \ell\atop k_1,...,k_{r-1}\neq j}\frac{(w_{k_1}-1)\cdots (w_{k_{r-1}}-1)}{w_{k_1}w_{k_2}\cdots w_{k_{r-1}}w_j}\right)(w_j-1)^{\ell}
\\&=(-1)^{r+1}w_1\cdots w_{\ell}\sum_{j=1}^{\ell}\prod_{i\neq j}\frac{1}{w_j-w_i}\cdot\left(\sum_{1\leq k_1<\cdots <k_{r-1}\leq \ell\atop k_1,...,k_{r-1}\neq j}\frac{(w_{k_1}-1)\cdots (w_{k_{r-1}}-1)}{w_{k_1}w_{k_2}\cdots w_{k_{r-1}}}\right)\frac{(w_j-1)^{\ell}}{w_j}.
\end{align*}
Denote by
$$g(w_j)=\left(\sum_{1\leq k_1<\cdots <k_{r-1}\leq \ell\atop k_1,...,k_{r-1}\neq j}\frac{(w_{k_1}-1)\cdots (w_{k_{r-1}}-1)}{w_{k_1}w_{k_2}\cdots w_{k_{r-1}}}\right)\frac{(w_j-1)^{\ell}}{w_j}.$$
Then by definition of the Vandermonde determinant, we have
\begin{align*}
xC_{\ell-r}&=(-1)^{r+1}w_1\cdots w_{\ell}\sum_{j=1}^{\ell}\prod_{i\neq j}\frac{1}{w_j-w_i}\cdot g(w_j)
\\&=(-1)^{r+1}w_1\cdots w_{\ell}\sum_{j=1}^{\ell}\frac{(-1)^{\ell-j}g(w_j)\det V(w_1,...,w_{\ell}\setminus w_j)}{\det V(w_1,...,w_{\ell})},
\end{align*}
where $\det V(w_1,...,w_{\ell}\setminus w_j)$ represents the Vandermonde determinant of the variables\\
 $w_1,...,w_{j-1}, w_{j+1},...,w_{\ell}$.
Furthermore, we have
\begin{align*}
G=\sum_{j=1}^{\ell}(-1)^{\ell+j}g(w_j)\det V(w_1,...,w_{\ell}\setminus w_j)
=\det\left(\begin{array}{ccccc}
1 &w_1 &\cdots & w_1^{\ell-2}& g(w_1) \\
1 &w_2 &\cdots & w_2^{\ell-2}& g(w_1) \\
1 &w_3 &\cdots & w_3^{\ell-2}& g(w_3) \\
\vdots &\vdots &\ddots & \vdots& \vdots \\
1 &w_{\ell} &\cdots & w_{\ell}^{\ell-2}& g(w_{\ell})\\
\end{array}\right).
 \end{align*}
Observe that $G$ becomes i) $0$ if $g(w)$ is $1,w,\dots, w^{\ell-2}$; ii) $\det V(w_1,...,w_{\ell})$ if $g(w)=w^{\ell-1}$; iii)
$\frac{(-1)^{(\ell-1)^2}s_{\mu}\det V(w_1,...,w_{\ell})}{e_{\ell}^{i+1}(w_1,...,w_{\ell})}$ (by Lemma \ref{W-determ-r}) if $g(w)=w^{-i-1}$ for $i\geq 0$.
Thus we shall write $g(w_j)$ as a polynomial in $w_j$ with coefficients symmetric in $w_1,\dots, w_\ell$.

To this end, we use plethestic notation by treating $\overline{w}_i=\frac{w_i-1}{w_i}$ for $1\leq i\leq \ell$ as variables.
Then $w_i=\frac{\overline{w}_i-1}{\overline{w}_i}$ is not a variable. Let $\overline{w}=\overline{w_1}+\overline{w_2}+\cdots +\overline{w_\ell}$.
By Lemma \ref{PlethysticNot}, we have
\begin{align*}
g(w_j)&=e_{r-1}[\overline{w}-\overline{w}_j]\cdot \frac{(w_j-1)^{\ell}}{w_j}
=\sum_{m=0}^{r-1}e_{r-1-m}[\overline{w}]\cdot e_{m}[-\overline{w_j}]\cdot \frac{(w_j-1)^{\ell}}{w_j}
\\&=\sum_{m=0}^{r-1}e_{r-1-m}[\overline{w}]\cdot (-1)^m\cdot \frac{(w_j-1)^{m+\ell}}{w_j^{m+1}}
=\sum_{m=0}^{r-1}(-1)^m e_{r-1-m}[\overline{w}]\sum_{i=0}^{m+\ell}(-1)^i\binom{m+\ell}{i}w_j^{\ell-i-1}.
\end{align*}
Therefore, we obtain
\begin{align*}
G&=\sum_{m=0}^{r-1}(-1)^m e_{r-1-m}[\overline{w}]\left(\sum_{i=0}^m(-1)^{\ell+i+(\ell-1)^2}\binom{m+\ell}{i+\ell}\frac{s_{\mu}\det V(w_1,...,w_{\ell})}{e_{\ell}^{i+1}}+\det V(w_1,...,w_{\ell})\right).
\end{align*}

Now we have
\begin{align*}
xC_{\ell-r}&=(-1)^{r+1}w_1\cdots w_{\ell}\left(\sum_{m=0}^{r-1}(-1)^m e_{r-1-m}[\overline{w}]\left(\sum_{i=0}^m(-1)^{i+1}\binom{m+\ell}{i+\ell}
\frac{s_{\mu}}{e_{\ell}^{i+1}}+1\right)\right)
\\&=\sum_{m=0}^{r-1}e_{r-1-m}[\overline{w}]\left(\sum_{i=0}^m(-1)^{r+m+i}\binom{m+\ell}{i+\ell}
\frac{s_{\mu}}{e_{\ell}^{i}}+(-1)^{r+m+1}e_{\ell}\right).
\end{align*}
This completes the proof.
\end{proof}

Now we present our proof of Theorem \ref{MainResinIntrou}.
\begin{proof}[Proof of Theorem \ref{MainResinIntrou}]
For $1\leq r\leq \ell$, let $f_r(n)$ be the number of lattice paths from $(0,0)$ to $(\ell n-r,kn)$ that never go above the path $(N^kE^{\ell})^{n-1}N^kE^{\ell-r}$. We consider the generating function $F_r(x)=\sum_{n\geq 0}f_r(n)x^n$ with the convention $f_r(0)=1$. Then it is easy to see that
$F_r(x)=xC_{\ell-r}+1$ for $1\leq r\leq \ell$.

To simplify Equation \eqref{xC1-rw22}, we need to write $e_{r-1-m}[\overline{w}]$ in terms of $e_i=e_i(w_1,\dots, w_\ell)$.
Consider the generating function of $e_i[\overline{w}]$ as follows.
\begin{align*}
\prod_{i=1}^{\ell}(1+t\overline{w}_i)&=\prod_{i=1}^{\ell}\left(1+t(1-w_i^{-1})\right)
=\prod_{i=1}^{\ell}(1+t-t\cdot w_i^{-1})
\\&=(1+t)^{\ell}\prod_{i=1}^{\ell}\left(1-\frac{t}{1+t}w_i^{-1}\right)
\\&=(1+t)^{\ell}\sum_{j\geq 0}e_{j}(w_1^{-1},w_2^{-1},...w_{\ell}^{-1})(-1)^j\left(\frac{t}{1+t}\right)^j
\\&=\sum_{j\geq 0}e_{j}(w_1^{-1},w_2^{-1},...,w_{\ell}^{-1})(-1)^jt^j(1+t)^{\ell-j}.
\end{align*}
Taking the coefficient of $t^{r-m-1}$ on both sides of the above equation, we obtain
\begin{equation}\label{ER1Mer1m}
\begin{aligned}
e_{r-1-m}[\overline{w}]&=[t^{r-m-1-j}]\sum_{j=0}^{r-m-1}(-1)^j
e_j(w_1^{-1},w_2^{-1},...,w_{\ell}^{-1})(1+t)^{\ell-j}
\\&=\sum_{j=0}^{r-m-1}(-1)^j\binom{\ell-j}{r-m-1-j}e_j(w_1^{-1},w_2^{-1},...,w_{\ell}^{-1})
\\&= e_\ell^{-1}\sum_{j=0}^{r-m-1}(-1)^j\binom{\ell-j}{r-m-1-j}e_{\ell-j}(w_1,w_2,...,w_{\ell}).
\end{aligned}
\end{equation}
By Equation \eqref{xC1-rw22}, we first obtain
\begin{align}\label{FSLast2}
\sum_{m=0}^{r-1}e_{r-1-m}[\overline{w}](-1)^{r+m+1}e_{\ell}
=&\sum_{m=0}^{r-1}\sum_{j=0}^{r-1-m}(-1)^{r+m+1+j}
\binom{\ell-j}{r-1-m-j}e_{\ell-j}(w_1,w_2,...,w_{\ell}) \nonumber
\\=&\sum_{j=0}^{r-1}\sum_{m=0}^{r-1-j}(-1)^{j+r+m+1}
\binom{\ell-j}{r-1-m-j}e_{\ell-j}(w_1,w_2,...,w_{\ell}) \nonumber
\\(\text{By Lemma }\ref{CTbinomTT1}.)\ \ \ =&\sum_{j=0}^{r-1}(-1)^{r-j-1}\binom{\ell-j-1}{r-j-1}e_{\ell-j}(w_1,w_2,...,w_{\ell})\nonumber
\\=&\sum_{i=0}^{r-1}(-1)^i\binom{\ell-r+i}{i}e_{\ell-r+1+i}(w_1,w_2,...,w_{\ell}).
\end{align}

By $F_r(x)=xC_{\ell-r}+1$ and observing Equation \eqref{xC1-rw22}, we assert that
\begin{align}\label{SUM=0}
1+\sum_{m=0}^{r-1}e_{r-1-m}[\overline{w}]\left(\sum_{i=0}^m(-1)^{r+m+i}\binom{m+\ell}{i+\ell}
\frac{s_{\mu}}{e_{\ell}^{i}}\right)=0.
\end{align}
i.e.,
\begin{align*}
1+\sum_{m=0}^{r-1}\left(\sum_{j=0}^{r-m-1}(-1)^j\binom{\ell-j}{r-m-1-j}
e_j(w_1^{-1},...,w_{\ell}^{-1})\right)\left(\sum_{i=0}^m(-1)^{r+m+i}\binom{m+\ell}{i+\ell}
\frac{s_{\mu}}{e_{\ell}^{i}}\right)=0.
\end{align*}
We first consider the case $i=0$ on the left side of the above equation. We have
\begin{align*}
&1+\sum_{m=0}^{r-1}\left(\sum_{j=0}^{r-m-1}(-1)^j\binom{\ell-j}{r-m-1-j}
e_j(w_1^{-1},...,w_{\ell}^{-1})\right)\left((-1)^{r+m}\binom{m+\ell}{\ell}\right)
\\=&1+\sum_{j=0}^{r-1}\sum_{m=0}^{r-1-j}(-1)^{r+m+j}\binom{\ell-j}{r-1-m-j}\binom{m+\ell}{\ell}
e_j(w_1^{-1},...,w_{\ell}^{-1})
\\=&1+\sum_{j=0}^{r-1}-\binom{r-1}{j}e_j(w_1^{-1},...,w_{\ell}^{-1})\ \ \ (\text{By Lemma }\ref{CTbinomTT2}.)
\\=&-\sum_{j=1}^{r-1}\binom{r-1}{j}e_j(w_1^{-1},...,w_{\ell}^{-1}).
\end{align*}

Now we consider the case $i>0$. By Equation \eqref{SUM=0}, we have
\begin{align*}
&\sum_{m=1}^{r-1}\sum_{j=0}^{r-m-1}\sum_{i=1}^m(-1)^{r+m+i+j}
\binom{\ell-j}{r-m-1-j}\binom{m+\ell}{i+\ell}e_j(w_1^{-1},...,w_{\ell}^{-1})\frac{s_{\mu}}{e_{\ell}^{i}}
\\=&\sum_{i=1}^{r-1}\sum_{j=0}^{r-1-i}\sum_{m=i}^{r-1-j}(-1)^{r+m+i+j}
\binom{\ell-j}{r-m-1-j}\binom{m+\ell}{i+\ell}e_j(w_1^{-1},...,w_{\ell}^{-1})\frac{s_{\mu}}{e_{\ell}^{i}}
\\=&\sum_{i=1}^{r-1}\sum_{j=0}^{r-1-i}(-1)^{i-1}
\binom{r-1}{j+i}e_j(w_1^{-1},...,w_{\ell}^{-1})\frac{s_{\mu}}{e_{\ell}^{i}}.\ \ \ (\text{By Lemma } \ref{CTbinomTT3}.)
\\=&\sum_{i=1}^{r-1}\sum_{j=0}^{r-1-i}(-1)^{i-1}
\binom{r-1}{j+i}e_j(w_1^{-1},...,w_{\ell}^{-1})h_i(w_1^{-1},...,w_{\ell}^{-1})\ \ \ (\text{By Lemma } \ref{HESlambda}.)
\\=&\binom{r-1}{1}e_1(w_1^{-1},...,w_{\ell}^{-1})+\cdots
+\binom{r-1}{r-1}e_{r-1}(w_1^{-1},...,w_{\ell}^{-1})\ \ \ (\text{By Lemma } \ref{EtoHorHtoE}.)
\\=&\sum_{j=1}^{r-1}\binom{r-1}{j}e_j(w_1^{-1},...,w_{\ell}^{-1}).
\end{align*}
Therefore, Equation \eqref{SUM=0} is correct. Furthermore, combining Equations \eqref{xC1-rw22}, \eqref{FSLast2}, and \eqref{SUM=0}, we have
$$F_r(x)=xC_{\ell-r}+1=\sum_{i=0}^{r-1}(-1)^i\binom{\ell-r+i}{i}e_{\ell-r+1+i}(w_1,w_2,...,w_{\ell}).$$
This completes the proof.
\end{proof}

The following results were first obtained by de Mier and Noy.
\begin{cor}[\cite{AnnadeMier}]\label{Annader-Mie}
Following the notation in Theorem \ref{MainResinIntrou}. Let $q(n)$ be the number of lattice paths from $(0,0)$ to $(\ell n,kn)$ that never go above the path $(N^kE^{\ell})^n$. Then the generating function $Q(x)=\sum_{n\geq 0}q(n)x^n$ is given by
$$Q(x)=\frac{-1}{x}(1-w_1)\cdots (1-w_{\ell}).$$
\end{cor}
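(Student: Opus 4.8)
The plan is to derive this corollary directly as the $r=\ell$ specialization of Theorem \ref{MainResinIntrou}, combined with two facts already recorded in the discussion preceding Equation \eqref{system-equat}: the relation $F_r(x)=xC_{\ell-r}+1$ (noted in the proof of Theorem \ref{MainResinIntrou}) and the identification $Q(x)=C_0=\sum_{n\geq 0}A_n(1,1)x^n$, which holds because $A_n(1,1)=t(P_n;1,1)$ counts exactly the lattice paths weakly below $P_n=(N^kE^{\ell})^n$.

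First I would set $r=\ell$ in the formula of Theorem \ref{MainResinIntrou}. Then $\binom{\ell-r+i}{i}=\binom{i}{i}=1$ and $e_{\ell-r+1+i}=e_{i+1}$, so the expression collapses to
$$F_\ell(x)=\sum_{i=0}^{\ell-1}(-1)^i e_{i+1}(w_1,\dots,w_\ell)=e_1-e_2+e_3-\cdots+(-1)^{\ell-1}e_\ell.$$

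Next I would invoke the elementary generating-function identity $\prod_{i=1}^{\ell}(1+tw_i)=\sum_{j=0}^{\ell}e_j(w_1,\dots,w_\ell)\,t^j$ with $e_0=1$, evaluated at $t=-1$, which yields $\prod_{i=1}^{\ell}(1-w_i)=\sum_{j=0}^{\ell}(-1)^j e_j(w_1,\dots,w_\ell)$. Rearranging gives
$$1-\prod_{i=1}^{\ell}(1-w_i)=\sum_{i=1}^{\ell}(-1)^{i-1}e_i(w_1,\dots,w_\ell)=F_\ell(x),$$
so that $F_\ell(x)=1-\prod_{i=1}^{\ell}(1-w_i)$.

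Finally, using $F_\ell(x)=xC_0+1$ and $Q(x)=C_0$, I would subtract $1$ and divide by $x$: from $xQ(x)+1=1-\prod_{i=1}^{\ell}(1-w_i)$ we obtain $xQ(x)=-\prod_{i=1}^{\ell}(1-w_i)$, hence $Q(x)=\frac{-1}{x}(1-w_1)\cdots(1-w_\ell)$, as claimed. There is no substantial obstacle in this argument; the only points requiring care are the index bookkeeping in the $r=\ell$ collapse (checking that the binomial factor is identically $1$ and that the summation range $0\le i\le \ell-1$ matches the range in $\prod(1-w_i)$), and recalling that $Q(x)$ is literally $C_0$ by the definition of $A_n=t(P_n;z,y)$, so that the relation $F_\ell(x)=xC_0+1$ may be applied verbatim.
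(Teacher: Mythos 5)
Your proposal is correct and follows essentially the same route as the paper's own proof: both specialize Theorem \ref{MainResinIntrou} at $r=\ell$, use $Q(x)=C_0$ together with $F_\ell(x)=xC_{0}+1$, and then identify $\sum_{i=1}^{\ell}(-1)^{i-1}e_i(w_1,\dots,w_\ell)$ with $1-\prod_{i=1}^{\ell}(1-w_i)$ via the generating function of the elementary symmetric functions. The only cosmetic difference is that the paper absorbs the subtracted $1$ into the alternating sum as a $(-1)^{-1}e_0$ term, whereas you expand the product first; these are the same algebraic step.
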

\begin{proof}
By the lattice path interpretation of $C_0$, we have $Q(x)=C_0$. By Theorem \ref{MainResinIntrou}, we have
\begin{align*}
Q(x)&=\frac{1}{x}(F_{\ell}(x)-1)=\frac{1}{x}\left(\sum_{i=0}^{\ell-1}(-1)^ie_{i+1}-1\right)
\\&=\frac{1}{x}\left(\sum_{i=0}^{\ell}(-1)^{i-1}e_{i}\right)=\frac{-1}{x}(1-w_1)(1-w_2)\cdots (1-w_{\ell}).
\end{align*}
This completes the proof.
\end{proof}

\begin{cor}\label{CaseF1Thm}
Following the notation in Theorem \ref{MainResinIntrou}. If $r=1$, then we have
$$F_1(x)=w_1w_2\cdots w_{\ell}.$$
If $r=2$, then we have
$$F_2(x)=\left(1-\ell+\sum_{j=1}^{\ell}\frac{1}{w_j}\right)\cdot\prod_{j=1}^{\ell}w_j.$$
In particular, if $r=1$ and $k=\ell$, then
$$F_1(x)=\prod_{j=1}^{\ell}C(\xi^jx^{\frac{1}{\ell}}).$$
If $r=2$ and $k=\ell$, then
$$F_2(x)=\left(1-\ell+\sum_{j=1}^{\ell}\frac{1}{C(\xi^jx^{\frac{1}{\ell}})}\right)
\cdot\prod_{j=1}^{\ell}C(\xi^jx^{\frac{1}{\ell}}),$$
where $C(x)$ is the Catalan function and $\xi$ is a primitive $\ell$-th root of unity $\xi=e^{\frac{2\pi i}{\ell}}$.
\end{cor}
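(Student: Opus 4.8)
The plan is to obtain the first two identities as immediate specializations of Theorem \ref{MainResinIntrou}, and then, in the case $k=\ell$, to identify the fractional power series roots $w_1,\dots,w_\ell$ explicitly with Catalan generating functions.

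First I would set $r=1$ in the formula $F_r(x)=\sum_{i=0}^{r-1}(-1)^i\binom{\ell-r+i}{i}e_{\ell-r+1+i}(w_1,\dots,w_\ell)$ from Theorem \ref{MainResinIntrou}. Only the $i=0$ term survives, so $F_1(x)=e_\ell(w_1,\dots,w_\ell)=w_1w_2\cdots w_\ell$. For $r=2$ the terms $i=0$ and $i=1$ give $F_2(x)=e_{\ell-1}-(\ell-1)e_\ell$. To match the stated form I would use $\prod_{j=1}^\ell w_j=e_\ell$ together with the elementary identity $e_{\ell-1}=e_\ell\sum_{j=1}^\ell w_j^{-1}$ (since $e_{\ell-1}=\sum_j\prod_{i\neq j}w_i$), whence $F_2(x)=(1-\ell)e_\ell+e_\ell\sum_j w_j^{-1}=\bigl(1-\ell+\sum_{j=1}^\ell w_j^{-1}\bigr)\prod_{j=1}^\ell w_j$, exactly as claimed.

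The heart of the corollary is the specialization $k=\ell$. Here the kernel $(w-1)^\ell-xw^{k+\ell}$ becomes $(w-1)^\ell-xw^{2\ell}$, so every root satisfies $\bigl((w-1)/w^2\bigr)^\ell=x$. As recorded just before Equation \eqref{w1w2w3-wl} (using \cite[Proposition 6.1.8]{RP.Stanley99}), exactly $\ell$ of the $2\ell$ roots are fractional power series with $w_i(0)=1$; these are the branches emanating from the $\ell$-fold root $w=1$ at $x=0$. Extracting $\ell$-th roots, each such branch satisfies $(w-1)/w^2=\xi^j x^{1/\ell}$ for a unique $j\in\{1,\dots,\ell\}$, i.e. the quadratic $\xi^j x^{1/\ell}\,w^2-w+1=0$. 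Comparing this with the Catalan functional equation $y\,C(y)^2-C(y)+1=0$ under the substitution $y=\xi^j x^{1/\ell}$, and matching the initial condition $C(0)=1=w_i(0)$, I would conclude $w_j=C(\xi^j x^{1/\ell})$ for $1\le j\le\ell$.

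Substituting these roots into the first two formulas then yields the stated products, since $\prod_{j=1}^\ell w_j=\prod_{j=1}^\ell C(\xi^j x^{1/\ell})$ and $\sum_{j=1}^\ell w_j^{-1}=\sum_{j=1}^\ell 1/C(\xi^j x^{1/\ell})$. The main obstacle is the branch identification step: one must verify that $j\mapsto C(\xi^j x^{1/\ell})$ produces precisely the $\ell$ fractional power series solutions of the kernel (each has constant term $1$ and they are pairwise distinct), rather than any of the remaining $\ell$ algebraic branches, and that a single consistent choice of the $\ell$-th root $x^{1/\ell}$ is maintained throughout so that $\{\xi^j\}_{j=1}^\ell$ ranges over all $\ell$-th roots of unity exactly once.
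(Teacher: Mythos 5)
Your proposal is correct and is essentially the paper's (implicit) argument: the paper states this corollary without proof as a direct specialization of Theorem \ref{MainResinIntrou} at $r=1,2$, with the roots identified as $w_j=C(\xi^j x^{1/\ell})$ when $k=\ell$ via the factorization $\bigl((w-1)/w^2\bigr)^{\ell}=x$ and the Catalan equation $yC(y)^2-C(y)+1=0$. Your careful handling of the branch identification (matching constant term $1$ and counting that exactly one root of each quadratic is a fractional power series) fills in precisely the details the paper leaves to the reader.
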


\section{Exponential Formulas for $Q(x)$ and $F_1(x)$}
In this section, we mainly give two exponential formulas in Theorems \ref{QQXXinPhd} and \ref{Mainresult22},
using ideas from the doctoral thesis of the second author, especially \cite[Chapter 1, 1-5]{XinPhd}.
The thesis also includes some basic knowledge of Puiseux's theorem.

Throughout this section, we always use the following factorization
\begin{align}\label{ExponentialW-11x}
  (w-1)^{\ell}-xw^{k+\ell}=-x (w-w_1)\cdots (w-w_\ell) (w-w_{\ell+1})\cdots (w-w_{k+\ell}).
\end{align}
By Puiseux's theorem, we may assume $w_i$ are all fractional Laurent series in $\mathbb{C}^{\text{fra}}((x))$ and
$w_1,...,w_{\ell}$ is the unique fractional power series. Moreover, $w_i|_{x=0}=1$ for $1\leq i \leq \ell$.

Now we state our first result.
\begin{thm}\label{QQXXinPhd}
Follow the notation in Corollary \ref{Annader-Mie}. Let $q(n)$ be the number of lattice paths from $(0,0)$ to $(\ell n,kn)$ that never go above the path $(N^kE^{\ell})^n$. Then we have
$$Q(x)=\sum_{n\geq 0}q(n)x^n=\exp \left(\sum_{n\geq 1}\binom{kn+\ell n}{\ell n}\frac{x^n}{n}\right).$$
\end{thm}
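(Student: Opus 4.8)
The plan is to build directly on Corollary \ref{Annader-Mie}, which already gives the closed form $Q(x)=\frac{-1}{x}(1-w_1)\cdots(1-w_\ell)$ in terms of the $\ell$ power-series roots of the kernel. Since the target is an \emph{exponential} formula, the natural move is to compute $\log Q(x)$ rather than $Q(x)$ itself. The first step is to reparametrize the roots uniformly. Setting $t=x^{1/\ell}$, I introduce the unique formal power series $V=V(t)$ with $V(0)=1$ solving $V-1=t\,V^{(k+\ell)/\ell}$ (the principal branch at $V=1$). Because $(w-1)^\ell=x\,w^{k+\ell}$ becomes $(w-1)=\xi^{i}\,t\,w^{(k+\ell)/\ell}$ after taking $\ell$-th roots, the $\ell$ power-series roots are exactly $w_i=V(\xi^{i}x^{1/\ell})$ for $i=0,\dots,\ell-1$, where $\xi=e^{2\pi i/\ell}$; the leading terms $1+\xi^{i}x^{1/\ell}+\cdots$ are distinct, so this accounts for all $\ell$ of them.

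Next I would simplify the product. From $V-1=t\,V^{(k+\ell)/\ell}$ one gets $1-w_i=-\xi^{i}x^{1/\ell}w_i^{(k+\ell)/\ell}$, so
\[
\prod_{i=0}^{\ell-1}(1-w_i)=(-1)^{\ell}\Big(\prod_{i=0}^{\ell-1}\xi^{i}\Big)\,x\,\prod_{i=0}^{\ell-1}w_i^{(k+\ell)/\ell}=-x\prod_{i=0}^{\ell-1}w_i^{(k+\ell)/\ell},
\]
using $\prod_{i}\xi^{i}=\xi^{\ell(\ell-1)/2}=(-1)^{\ell-1}$. Hence the factors of $x$ and the signs cancel and $Q(x)=\prod_{i=0}^{\ell-1}w_i^{(k+\ell)/\ell}$, each power understood as the branch equal to $1$ at $x=0$. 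Taking logarithms (legitimate since every factor has constant term $1$) yields $\log Q(x)=\frac{k+\ell}{\ell}\sum_{i=0}^{\ell-1}\log V(\xi^{i}x^{1/\ell})$.

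Then the key simplification is a roots-of-unity multisection. Writing $\log V(t)=\sum_{m\ge1}b_m t^{m}$, the filter $\sum_{i=0}^{\ell-1}\xi^{im}=\ell\cdot\chi(\ell\mid m)$ collapses the sum to $\sum_{i=0}^{\ell-1}\log V(\xi^{i}x^{1/\ell})=\ell\sum_{n\ge1}b_{\ell n}x^{n}$, so $\log Q(x)=(k+\ell)\sum_{n\ge1}b_{\ell n}x^{n}$. It remains to evaluate $b_m=[t^m]\log V$ by Lagrange inversion applied to $V=1+t\,\phi(V)$ with $\phi(w)=w^{(k+\ell)/\ell}$ and $H=\log$, giving $b_m=\frac1m[\lambda^{m-1}](1+\lambda)^{m(k+\ell)/\ell-1}=\frac1m\binom{m(k+\ell)/\ell-1}{m-1}$; specializing $m=\ell n$ gives $b_{\ell n}=\frac{1}{\ell n}\binom{(k+\ell)n-1}{\ell n-1}$. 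Finally the elementary identity $(k+\ell)\cdot\frac{1}{\ell n}\binom{(k+\ell)n-1}{\ell n-1}=\frac1n\binom{(k+\ell)n}{\ell n}$ (from $\binom{N}{r}=\frac{N}{r}\binom{N-1}{r-1}$ with $N=(k+\ell)n,\ r=\ell n$) produces $\log Q(x)=\sum_{n\ge1}\binom{(k+\ell)n}{\ell n}\frac{x^n}{n}$, which is the claim after exponentiating.

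I expect the only genuinely delicate point to be the bookkeeping in the second step — tracking the fractional powers and the sign $\prod_i\xi^{i}=(-1)^{\ell-1}$ so that all factors of $x$ cancel and $Q(x)=\prod_i w_i^{(k+\ell)/\ell}$ comes out cleanly with constant term $1$; once that uniform parametrization and the multisection are in place, Lagrange inversion does all the real work and reduces everything to a single binomial coefficient. As a cross-check (and an alternative route if the sign chase is deemed unclean), one can instead differentiate: from $Q=\frac{-1}{x}\prod(1-w_i)$ and implicit differentiation of the kernel one finds $x\,Q'/Q=-1+\sum_{i=1}^{\ell}\frac{w_i}{(k+\ell)-kw_i}$, and the same multisection-plus-Lagrange argument (now with $H(w)=\frac{w}{(k+\ell)-kw}$) recovers $\sum_{n\ge0}\binom{(k+\ell)n}{\ell n}x^n$; this confirms the derivative of the asserted formula but is computationally heavier, so I would present the $\log Q$ version as the main proof.
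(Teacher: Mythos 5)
Your proof is correct, but it follows a genuinely different route from the paper's. The paper also starts from Corollary \ref{Annader-Mie}, but it never parametrizes the roots explicitly: it substitutes $w=u+1$, rewrites the factorization as $1-\frac{x(1+u)^{k+\ell}}{u^{\ell}}=A(x)\prod_{i\leq \ell}\bigl(1-\frac{u_i}{u}\bigr)\prod_{j>\ell}\bigl(1-\frac{u}{u_j}\bigr)$, identifies $Q(x)=A(x)^{-1}$ by comparing lowest-order terms in $u$, and then takes logarithms in $\mathbb{C}((u))^{\text{fra}}((x))$. The key observation there is support separation: the $\ell$ small roots contribute only negative powers of $u$ and the $k$ large roots only positive powers, so $\ln A(x)^{-1}$ is exactly the constant term $\mathop{\mathrm{CT}}_u \ln\bigl(1-\frac{x(1+u)^{k+\ell}}{u^{\ell}}\bigr)^{-1}$, which expands in one line to $\sum_{n\geq 1}\binom{kn+\ell n}{\ell n}\frac{x^n}{n}$. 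Where you use the uniform parametrization $w_i=V(\xi^i x^{1/\ell})$, roots-of-unity multisection, and Lagrange inversion with fractional exponents, the paper uses this constant-term extraction, which treats all $k+\ell$ roots implicitly, needs no inversion formula, and produces the binomial coefficient directly. Your route buys the attractive intermediate identity $Q(x)=\prod_{i}w_i^{(k+\ell)/\ell}$, a clean generalization of the $k=\ell$ Catalan product formula appearing in Theorem \ref{PeriodicWallGF} and Corollary \ref{CaseF1Thm}, and all your steps check out (the sign bookkeeping $\prod_i \xi^i=(-1)^{\ell-1}$, the multisection, and the specialization $b_{\ell n}=\frac{1}{\ell n}\binom{(k+\ell)n-1}{\ell n-1}$ are all correct); the paper's route is computationally lighter and, by reading off the nonconstant coefficients of the same logarithm, also yields byproducts such as Proposition \ref{PropPPM}, which your multisection discards.
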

\begin{proof}
Put $w=u+1$ in \eqref{ExponentialW-11x}. We obtain the factorization
\begin{align}\label{ExponentialUU-22x}
u^{\ell}-x(u+1)^{k+\ell}=-x(u-u_1)(u-u_2)\cdots (u-u_{k+\ell}),
\end{align}
where $u_i=w_i-1, \ 1\leq i \leq k+\ell$ are fractional Laurent series,
and $u_1,...,u_{\ell}$ are the only fractional power series. Moreover, $u_i|_{x=0}=0$ for $1\leq i \leq \ell$.

%We have shown in Corollary \ref{Annader-Mie} that
%$$Q(x)=\frac{-1}{x}(1-w_1)\cdots (1-w_{\ell})=\frac{(-1)^{\ell+1}}{x}u_1\cdots u_\ell.$$
Rewrite \eqref{ExponentialUU-22x} as
\begin{align}\label{WtoUAX}
1-\frac{x(u+1)^{k+\ell}}{u^{\ell}}=A(x)\Big(1-\frac{u_1}{u}\Big)\cdots\Big(1-\frac{u_{\ell}}{u}\Big)
\Big(1-\frac{u}{u_{\ell+1}}\Big)\cdots \Big(1-\frac{u}{u_{k+\ell}}\Big).
\end{align}
Then by comparing the lowest power terms of $u$ on both sides of the above equation, we obtain
$$-\frac{x}{u^{\ell}}=A(x)\cdot (-\frac{u_1}{u})\cdots (-\frac{u_{\ell}}{u}).$$
Therefore by Corollary \ref{Annader-Mie}, we have
$$(A(x))^{-1}=\frac{(-1)^{\ell+1}}{x}u_1\cdots u_{\ell}=\frac{(-1)^{\ell+1}}{x}(w_1-1)\cdots (w_{\ell}-1)=Q(x).$$

Now we extract $(A(x))^{-1}$ through taking logarithms and working in
$\mathbb{C}((u))^{\text{fra}}((x))$, i.e., the field of fractional Laurent series in $x$ with coefficients Laurent series in $u$.
We have
\begin{align}
\ln\left(1-\frac{x(u+1)^{k+\ell}}{u^{\ell}}\right)^{-1}&=\ln (A(x))^{-1}+\sum_{i=1}^{\ell}\ln\left(1-\frac{u_i}{u}\right)^{-1}
+\sum_{j=\ell+1}^{\ell+k}\ln\left(1-\frac{u}{u_i}\right)^{-1}\nonumber
\\&=\ln (A(x))^{-1}+\sum_{i=1}^{\ell}\sum_{n\geq 1}\frac{u_i^n}{nu^n}+\sum_{j=\ell+1}^{\ell+k}\sum_{n\geq 1}\frac{u^n}{nu_j^n} \nonumber
\\&=\ln (A(x))^{-1}+\sum_{n\geq 1}\frac{1}{nu^n}\sum_{i=1}^{\ell}u_i^n
+\sum_{n\geq 1}\frac{u^n}{n}\sum_{j=\ell+1}^{\ell+k}\frac{1}{u_j^n}.\label{HHHOne2}
\end{align}
On the right hand side of Equation \eqref{HHHOne2}, the second term only contains negative power terms of $u$, and the third term only contains positive power terms of $u$. Thus the first term $\ln (A(x))^{-1}$ is exactly the constant term, denoted $\CT_u$, and we have
\begin{align*}
\ln (A(x))^{-1}&=\mathop{\mathrm{CT}}_u \ln\left(1-\frac{x(u+1)^{k+\ell}}{u^{\ell}}\right)^{-1}=\mathop{\mathrm{CT}}_u \sum_{n\geq 1}\frac{1}{n}\left(\frac{(1+u)^{k+\ell}\cdot x}{u^{\ell}} \right)^n\\
&=\mathop{\mathrm{CT}}_u\sum_{n\geq 1}\frac{(1+u)^{kn+\ell n}}{nu^{\ell n}}\cdot x^n=\sum_{n\geq 1}\binom{kn+\ell n}{\ell n}\frac{x^n}{n}.
\end{align*}
Therefore,
$$Q(x)=(A(x))^{-1}=\exp \left(\sum_{n\geq 1}\binom{kn+\ell n}{\ell n}\frac{x^n}{n} \right).$$
This completes the proof.
\end{proof}

By observing the second term on the right hand side of Equation \eqref{HHHOne2}, we found that
$$\sum_{m\geq 1}\frac{1}{mu^m}\sum_{i=1}^{\ell}u_i^m=\sum_{m\geq 1}\frac{p_m(u_1,...,u_{\ell})}{m}u^{-m},$$
where $p_m$ is the power sum symmetric function. Thus we can also extract an interesting formula for $p_m(u_1,...,u_{\ell})$ as follows. $m^{-1}p_m(u_1,...,u_{\ell})$ is the coefficient of $u^{-m}$ in Equation \eqref{HHHOne2}. Therefore we have
\begin{small}
\begin{align*}
m^{-1}p_m(u_1,...,u_{\ell})=[u^{-m}]\ln\left(1-\frac{x(u+1)^{k+\ell}}{u^{\ell}}\right)^{-1}
=[u^{-m}]\sum_{n\geq 1}\frac{(1+u)^{kn+\ell n}x^n}{nu^{\ell n}}
=\sum_{n\geq 1}\binom{kn+\ell n}{\ell n-m}\frac{x^n}{n},
\end{align*}
\end{small}
where $\binom{kn+\ell n}{\ell n-m}=0$ for $\ell n<m$. We summarize the above results as the following proposition.

\begin{prop}\label{PropPPM}
Let $m\geq 1$. Let $w_1,...,w_{\ell}$ be the unique solutions of the equation $(w-1)^{\ell}-xw^{k+\ell}=0$ that are fractional power series. We have
$$p_m(w_1-1,...,w_{\ell}-1)=m\cdot \sum_{n\geq 1}\binom{kn+\ell n}{\ell n-m}\frac{x^n}{n},$$
where $\binom{kn+\ell n}{\ell n-m}=0$ for $\ell n<m$.
\end{prop}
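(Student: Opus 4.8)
The plan is to read the result straight off Equation \eqref{HHHOne2}, which was already established in the proof of Theorem \ref{QQXXinPhd}; no new analytic work is needed. The essential observation is that the inner sums appearing in the second term on the right-hand side of \eqref{HHHOne2} are precisely power sums. Since $u_i = w_i - 1$ for $1 \leq i \leq \ell$, we have $\sum_{i=1}^{\ell} u_i^n = p_n(w_1-1,\dots,w_\ell-1)$, so that middle term may be rewritten as $\sum_{n\geq 1}\frac{p_n(w_1-1,\dots,w_\ell-1)}{n}\,u^{-n}$, an expression involving only strictly negative powers of $u$.

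The key step I would carry out is to extract the coefficient of $u^{-m}$ from both sides of \eqref{HHHOne2}, working in the field $\mathbb{C}((u))^{\mathrm{fra}}((x))$. On the right-hand side I would argue that for $m\geq 1$ only the middle term contributes: the first term $\ln(A(x))^{-1}$ depends on $x$ alone and is therefore the coefficient of $u^0$, while the third term $\sum_{n\geq 1}\frac{u^n}{n}\sum_{j=\ell+1}^{\ell+k}u_j^{-n}$ carries only positive powers of $u$. Hence $[u^{-m}](\text{RHS}) = \frac{1}{m}\,p_m(w_1-1,\dots,w_\ell-1)$.

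On the left-hand side I would expand the logarithm exactly as in the proof of Theorem \ref{QQXXinPhd}, namely $\ln\bigl(1 - x(u+1)^{k+\ell}/u^{\ell}\bigr)^{-1} = \sum_{n\geq 1}\frac{(1+u)^{kn+\ell n}}{n\,u^{\ell n}}\,x^n$, and then compute $[u^{-m}]$ termwise by the binomial theorem: $[u^{-m}]\frac{(1+u)^{kn+\ell n}}{u^{\ell n}} = [u^{\ell n - m}](1+u)^{kn+\ell n} = \binom{kn+\ell n}{\ell n - m}$, with the convention that this vanishes when $\ell n < m$. Equating the two extractions and multiplying through by $m$ then yields $p_m(w_1-1,\dots,w_\ell-1) = m\sum_{n\geq 1}\binom{kn+\ell n}{\ell n - m}\frac{x^n}{n}$, as claimed.

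Because the factorization \eqref{ExponentialUU-22x}, the logarithmic expansion, and the grading of each summand by powers of $u$ are all inherited from the proof of Theorem \ref{QQXXinPhd}, there is no genuine obstacle here. The only point requiring a moment of care is verifying that the first and third terms of \eqref{HHHOne2} contribute nothing to $[u^{-m}]$ for $m\geq 1$, which follows immediately from their $u$-degrees being $0$ and strictly positive respectively; this is what legitimizes isolating the power sum on the right-hand side.
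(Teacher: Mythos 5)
Your proposal is correct and follows essentially the same route as the paper: the authors likewise recognize the middle term of Equation \eqref{HHHOne2} as $\sum_{m\geq 1}\frac{p_m(u_1,\dots,u_\ell)}{m}u^{-m}$ with $u_i=w_i-1$, extract the coefficient of $u^{-m}$ (noting the other two terms contribute only to nonnegative powers of $u$), and evaluate the left-hand side via the binomial expansion of $\frac{(1+u)^{kn+\ell n}}{u^{\ell n}}$. There is nothing to add; your argument matches the paper's proof step for step.
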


\begin{thm}\label{Mainresult22}
Follow the notation in Theorem \ref{MainResinIntrou}. Let $f_1(n)$ be the number of lattice paths from $(0,0)$ to $(\ell n-1,kn)$ that never go above the path $(N^kE^{\ell})^{n-1}N^kE^{\ell-1}$. Then we have
$$F_1(x)=\sum_{n\geq 0}f_1(n)x^n =\exp \left(\sum_{n\geq 1}\binom{kn+\ell n-1}{\ell n-1}\frac{x^n}{n}\right).$$
\end{thm}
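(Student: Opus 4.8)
The plan is to combine the product formula for $F_1$ already recorded in Corollary \ref{CaseF1Thm} with the explicit power-sum evaluation of Proposition \ref{PropPPM}, thereby reducing the theorem to a single alternating binomial identity. Concretely, the case $r=1$ of Corollary \ref{CaseF1Thm} gives $F_1(x)=w_1w_2\cdots w_\ell$. Writing $u_i=w_i-1$ as in the factorization \eqref{ExponentialUU-22x}, each $u_i$ is a fractional power series with $u_i|_{x=0}=0$, so $F_1(x)=\prod_{i=1}^\ell(1+u_i)$ and $\ln F_1(x)=\sum_{i=1}^\ell \ln(1+u_i)$ is a well-defined element of $\mathbb{C}^{\text{fra}}((x))$ with no constant term.

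First I would expand each logarithm as $\ln(1+u_i)=\sum_{m\ge1}\frac{(-1)^{m-1}}{m}u_i^m$ and sum over $i$ to recognize power sums, obtaining $\ln F_1(x)=\sum_{m\ge1}\frac{(-1)^{m-1}}{m}\,p_m(u_1,\dots,u_\ell)$. The interchange of summations is legitimate because the $x$-valuation of $p_m(u_1,\dots,u_\ell)$ grows with $m$. Next I substitute Proposition \ref{PropPPM}, namely $p_m(u_1,\dots,u_\ell)=m\sum_{n\ge1}\binom{(k+\ell)n}{\ell n-m}\frac{x^n}{n}$; the factor $m$ cancels the $1/m$, and after swapping the $m$- and $n$-summations I reach
$$\ln F_1(x)=\sum_{n\ge1}\frac{x^n}{n}\sum_{m\ge1}(-1)^{m-1}\binom{(k+\ell)n}{\ell n-m}.$$

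It then remains to prove, for $N=(k+\ell)n$ and $L=\ell n$, the identity $\sum_{m\ge1}(-1)^{m-1}\binom{N}{L-m}=\binom{N-1}{L-1}$. Reindexing by $j=L-m$ rewrites the left side as $\sum_{j=0}^{L-1}(-1)^{L-1-j}\binom{N}{j}$, and the partial alternating binomial sum $\sum_{j=0}^{p}(-1)^{j}\binom{N}{j}=(-1)^{p}\binom{N-1}{p}$ (itself obtained, in the style of Lemmas \ref{CTbinomTT1}--\ref{CTbinomTT2}, by comparing the coefficient of $x^{p}$ in $(1-x)^{N}(1-x)^{-1}=(1-x)^{N-1}$) collapses it to $\binom{N-1}{L-1}$. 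Plugging back yields $\ln F_1(x)=\sum_{n\ge1}\binom{(k+\ell)n-1}{\ell n-1}\frac{x^n}{n}$, which is exactly the claimed exponential formula.

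I do not expect a genuine obstacle: every ingredient is already in place and the argument is essentially mechanical once Proposition \ref{PropPPM} is invoked. The only points demanding care are the formal justification of the termwise logarithm expansion and the summation interchange inside the field of fractional power series, together with the sign bookkeeping in the reindexed alternating sum. As an alternative I could mimic the constant-term extraction of Theorem \ref{QQXXinPhd} directly on $\prod_{i=1}^\ell(1+u_i)$, but routing through the already-proven Proposition \ref{PropPPM} is shorter and avoids re-deriving the separation of the small and large families of roots.
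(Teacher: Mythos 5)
Your proposal is correct, and it reaches the theorem by a genuinely different route than the paper. The paper's proof also starts from Corollary \ref{CaseF1Thm} ($F_1(x)=w_1\cdots w_\ell$), but then performs a second kernel manipulation: it rewrites the factorization \eqref{ExponentialW-11x} under the substitution $w\mapsto w^{-1}$, introduces the auxiliary series $B(x)=(-1)^{k+1}x\,w_{\ell+1}\cdots w_{\ell+k}$ built from the \emph{large} roots, identifies $(B(x))^{-1}=w_1\cdots w_\ell=F_1(x)$, and finally extracts $\ln (B(x))^{-1}$ as a constant term, $\CT_w \ln\bigl(1-xw^{-k}(1-w)^{-\ell}\bigr)^{-1}$, using $\CT_w\, w^{-kn}(1-w)^{-\ell n}=\binom{kn+\ell n-1}{kn}$. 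You instead stay entirely with the small roots: expand $\ln F_1=\sum_{m\ge 1}\frac{(-1)^{m-1}}{m}p_m(u_1,\dots,u_\ell)$, substitute Proposition \ref{PropPPM} (which the paper establishes before this theorem and independently of it, so there is no circularity), and close with the alternating partial sum $\sum_{j=0}^{p}(-1)^j\binom{N}{j}=(-1)^p\binom{N-1}{p}$. Your two points of care are handled correctly: the interchange of summations is legitimate since $p_m(u_1,\dots,u_\ell)$ is a series in integer powers of $x$ starting at $n=\lceil m/\ell\rceil$ (so each coefficient of $x^n$ receives only the finitely many terms $m\le \ell n$), and the sign bookkeeping checks out (e.g.\ $N=6$, $L=3$: $15-6+1=10=\binom{5}{2}$). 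In fact the identity you re-derive is exactly Lemma \ref{CTbinomTT1} in disguise (after reindexing, that lemma reads $\sum_{i=0}^{s}(-1)^i\binom{a}{i}=(-1)^s\binom{a-1}{s}$), so you could simply cite it. What each approach buys: yours is shorter given what is already proved---no second substitution, no auxiliary $B(x)$, only a classical binomial identity---whereas the paper's proof is self-contained at the level of a single clean constant-term extraction and yields as a by-product the relation $F_1(x)=\bigl((-1)^{k+1}x\,w_{\ell+1}\cdots w_{\ell+k}\bigr)^{-1}$ expressing $F_1$ through the large roots of the kernel, which your route does not produce.
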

\begin{proof}
Rewrite Equation \eqref{ExponentialW-11x} as
$$(w-1)^{\ell}-xw^{k+\ell}=B(x)\cdot \prod_{i=1}^{\ell}(w-w_i)\prod_{j=\ell+1}^{\ell+k}\left(1-\frac{w}{w_j}\right),$$
where
$$B(x)=(-1)^{k+1}x\cdot w_{\ell+1}\cdots w_{\ell+k}.$$
Thus we obtain
$$1-\frac{xw^{k+\ell}}{(w-1)^{\ell}}=B(x)\cdot \prod_{i=1}^{\ell}\frac{w-w_i}{w-1}\prod_{j=\ell+1}^{\ell+k}\left(1-\frac{w}{w_j}\right).$$
Now making the substitution $w\to w^{-1}$ and simplifying gives
\begin{align}\label{HHHOne3}
1-\frac{xw^{-k}}{(1-w)^{\ell}}=B(x)\cdot \prod_{i=1}^{\ell}\frac{1-ww_i}{1-w} \prod_{j=\ell+1}^{\ell+k}\left(1-\frac{1}{ww_j}\right).
\end{align}
By Equation \eqref{HHHOne3}, we have
$$w^k(1-w)^{\ell}-x=B(x)\cdot \prod_{i=1}^{\ell}(1-w_iw)\prod_{j=\ell+1}^{\ell+k}\left(w-\frac{1}{w_j}\right).$$
Then by comparing the coefficients of $w^{k+\ell}$ on both sides of the above equation, we obtain
$$(-1)^{\ell}w^{k+\ell}=B(x)\cdot (-1)^{\ell}w^{\ell+k}\prod_{i=1}^{\ell}w_i.$$
Therefore we have
$$(B(x))^{-1}=w_1w_2\cdots w_{\ell}=F_1(x),$$
where the second equality follows by Corollary \ref{CaseF1Thm}.

Again, by taking logarithms in \eqref{HHHOne3}, we obtain
\begin{align}
\ln\left(1-\frac{xw^{-k}}{(1-w)^{\ell}}\right)^{-1}&=\ln (B(x))^{-1}+\sum_{i=1}^{\ell}\ln\left(\frac{1-ww_i}{1-w}\right)^{-1}
+\sum_{j=\ell+1}^{\ell+k}\ln\left(1-\frac{1}{ww_j}\right)^{-1}\nonumber
\\&=\ln (B(x))^{-1}+\sum_{i=1}^{\ell}\ln\left(1-\frac{w(w_i-1)}{1-w}\right)^{-1}
+\sum_{j=\ell+1}^{\ell+k}\ln\left(1-\frac{1}{ww_j}\right)^{-1}\nonumber
\\&=\ln (B(x))^{-1}+\sum_{i=1}^{\ell}\sum_{n\geq 1}\frac{1}{n}\left(\frac{(w_i-1)w}{1-w}\right)^n
+\sum_{j=\ell+1}^{\ell+k}\sum_{n\geq 1}\frac{1}{n(ww_j)^n}.\label{HHHOne5}
\end{align}
On the right hand side of Equation \eqref{HHHOne5}, the second term  only contains positive power terms of $w$, and the third term only contains negative power terms of $w$. Thus the first term is just the constant term
\begin{align*}
\ln (B(x))^{-1}=\mathop{\mathrm{CT}}\limits_{w}\ln\left(1-\frac{xw^{-k}}{(1-w)^{\ell}}\right)^{-1}
=\sum_{n\geq 1}\frac{x^n}{n}\mathop{\mathrm{CT}}\limits_{w}\frac{1}{w^{kn}(1-w)^{\ell n}}=\sum_{n\geq 1}\binom{kn+\ell n-1}{kn}\frac{x^n}{n}.
\end{align*}
Thus we have
$$F_1(x)=(B(x))^{-1}=\exp \left(\sum_{n\geq 1}\binom{kn+\ell n-1}{\ell n-1}\frac{x^n}{n} \right).$$
This completes the proof.
\end{proof}

\begin{proof}[Proof of Theorem \ref{PeriodicWallGF}]
The theorem is followed by Proposition \ref{TableauxLatticePartit}, Theorem \ref{MainResinIntrou} and Theorem \ref{FF1ExpIntrod}.
\end{proof}

Similar to Proposition \ref{PropPPM}, we can obtain the following result.

\begin{prop}
Let $m\geq 1$. Let $w_1,...,w_{\ell}$ be the unique solutions of the equation $(w-1)^{\ell}-xw^{k+\ell}=0$ that are fractional power series. Let $w_{\ell+1},...,w_{\ell+k}$ be the other $k$ solutions in the field of fractional Laurent series $\mathbb{C}^{\text{fra}}((x))$.
We have
$$p_m(w_{\ell+1}^{-1},...,w_{\ell+k}^{-1})=m\cdot \sum_{n\geq 1}\binom{\ell n+kn-m-1}{kn-m}\frac{x^n}{n},$$
where $\binom{kn+\ell n-m-1}{kn-m}=0$ for $kn<m$ or $kn+\ell n<m+1$.
\end{prop}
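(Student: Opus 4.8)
The plan is to follow verbatim the derivation that produced Proposition \ref{PropPPM} from the proof of Theorem \ref{QQXXinPhd}, but applied instead to Equation \eqref{HHHOne5} in the proof of Theorem \ref{Mainresult22}. The key observation is that the third sum on the right-hand side of \eqref{HHHOne5} already packages the desired power sum: since
$$\sum_{j=\ell+1}^{\ell+k}\sum_{n\geq 1}\frac{1}{n(ww_j)^n}=\sum_{n\geq 1}\frac{w^{-n}}{n}\sum_{j=\ell+1}^{\ell+k}w_j^{-n}=\sum_{n\geq 1}\frac{p_n(w_{\ell+1}^{-1},\ldots,w_{\ell+k}^{-1})}{n}\,w^{-n},$$
the quantity $m^{-1}p_m(w_{\ell+1}^{-1},\ldots,w_{\ell+k}^{-1})$ is precisely the coefficient of $w^{-m}$ in that third term.

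First I would recall from the proof of Theorem \ref{Mainresult22} the decomposition of \eqref{HHHOne5} according to powers of $w$, working in the field of fractional Laurent series in $x$ with coefficients Laurent series in $w$: the second sum contains only positive powers of $w$ (as $w_i-1$ vanishes at $x=0$ and $w/(1-w)$ starts at $w^1$), the third sum contains only negative powers of $w$, and $\ln(B(x))^{-1}$ is the constant term. Consequently, for $m\geq 1$ the coefficient of $w^{-m}$ on the left-hand side of \eqref{HHHOne5} equals the coefficient of $w^{-m}$ in the third sum, that is,
$$m^{-1}p_m(w_{\ell+1}^{-1},\ldots,w_{\ell+k}^{-1})=[w^{-m}]\ln\left(1-\frac{xw^{-k}}{(1-w)^{\ell}}\right)^{-1}.$$

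Next I would expand the logarithm as $\sum_{n\geq 1}\frac{x^n}{n}\,w^{-kn}(1-w)^{-\ell n}$ and extract the coefficient of $w^{-m}$. Using the power-series expansion $(1-w)^{-\ell n}=\sum_{t\geq 0}\binom{\ell n-1+t}{t}w^t$, the requirement $t-kn=-m$ forces $t=kn-m$, giving the coefficient $\binom{\ell n+kn-m-1}{kn-m}$. Multiplying through by $m$ yields
$$p_m(w_{\ell+1}^{-1},\ldots,w_{\ell+k}^{-1})=m\sum_{n\geq 1}\binom{\ell n+kn-m-1}{kn-m}\frac{x^n}{n}.$$
The conventions on the vanishing binomial fall out of this extraction: a nonzero contribution requires $t=kn-m\geq 0$, i.e.\ $kn\geq m$, so the term vanishes when $kn<m$ (and hence a fortiori when $kn+\ell n<m+1$, since $\ell n\geq 1$).

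I do not expect a genuine obstacle here, since the entire argument is a transcription of the one behind Proposition \ref{PropPPM}, using the companion expansion \eqref{HHHOne5} in place of \eqref{HHHOne2}. The only points demanding care are bookkeeping: confirming that $w_j^{-1}$ for $\ell+1\leq j\leq \ell+k$ is a fractional power series in $x$ (these roots tend to infinity as $x\to 0$, so their reciprocals vanish there, making each $p_m$ a well-defined element of $\mathbb{C}^{\text{fra}}((x))$), and making sure the binomial expansion used for $(1-w)^{-\ell n}$ is the one in nonnegative powers of $w$ so that only $t=kn-m$ survives the coefficient extraction.
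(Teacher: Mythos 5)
Your proposal is correct and is exactly the argument the paper intends: the paper gives no separate proof of this proposition, stating only that it follows ``similar to Proposition \ref{PropPPM},'' and your transcription—reading off $m^{-1}p_m(w_{\ell+1}^{-1},\ldots,w_{\ell+k}^{-1})$ as the coefficient of $w^{-m}$ in Equation \eqref{HHHOne5}, then extracting $[w^{kn-m}](1-w)^{-\ell n}=\binom{\ell n+kn-m-1}{kn-m}$—is precisely that omitted computation. Your closing remarks on the vanishing conventions and on $w_j^{-1}$ being a fractional power series vanishing at $x=0$ are also correct bookkeeping.
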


\section{Two Byproducts}
Our proof of Theorem \ref{MainResinIntrou} seems lengthy, so we tried to solve the system \eqref{system-equat} for $C_t$, $0\leq t\leq \ell$ in other ways.
This leads to two byproducts, namely, Corollary \ref{c-byproduct} and Theorem \ref{SFdetermidentify}. Simple proofs of the byproducts may give rise simple proofs of Theorem \ref{MainResinIntrou}.
\subsection{Recursive Operation}
In \cite{AnnadeMier}, de Mier and Noy obtained the full Tutte polynomials as follows:
$$\sum_{n\geq 0}A_{n}(z,y)x^n=\frac{-(z-w_1)\cdots (z-w_{\ell})}{(xz^{k+\ell}-(z-1)^{\ell})(y+w_1-yw_1)\cdots (y+w_{\ell}-yw_{\ell})}.$$
We only need the specialization at $y=1$. Let
\begin{align}\label{AoverlineZ}
\mathrm{\overline{A}}(z):=\sum_{n\geq 0}A_{n}(z,1)x^n=\frac{-(z-w_1)\cdots (z-w_{\ell})}{xz^{k+\ell}-(z-1)^{\ell}}.
\end{align}

\begin{thm}\label{ThmFReccParit}
Following the notation in Theorem \ref{MainResinIntrou}. Let $1\leq r\leq \ell-1$. Then we have
$$F_r(x)=1+\sum_{i=1}^{\ell-r}S(\ell-r,i)x\mathrm{\overline{B}}_1^{(i-1)}(1),$$
where
$$\mathrm{\overline{B}}_1(z)=(k+1)z^k\mathrm{\overline{A}}(z)
+z^{k+1}\mathrm{\overline{A}}^{\prime}(z),\ \ \ \mathrm{\overline{A}}(z)=\frac{-(z-w_1)\cdots (z-w_{\ell})}{xz^{k+\ell}-(z-1)^{\ell}},$$
the $S(\ell-r,i)$ are Stirling numbers of the second kind and $\mathrm{\overline{B}}^{(i-1)}_1(z)$ is a $(i-1)$-th degree differential operation for $\mathrm{\overline{B}}_1(z)$.
\end{thm}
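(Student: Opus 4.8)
The plan is to reduce the entire statement to a single differential evaluation of the known closed form $\mathrm{\overline{A}}(z)$ at $z=1$. First I would record, exactly as in the de Mier--Noy set-up recalled above, that $F_r(x)=1+xC_{\ell-r}$ and that $C_i=\mathrm{\overline{B}}_i(1)$, where $\mathrm{\overline{B}}_i(z)=\Phi^{i}(z^{k}\mathrm{\overline{A}}(z))$ with base case $\mathrm{\overline{B}}_0(z)=z^{k}\mathrm{\overline{A}}(z)$. Specializing the operator at $y=1$ turns $\Phi$ into the divided difference $\Phi g(z)=\frac{zg(z)-g(1)}{z-1}$, which is regular at $z=1$ because the kernel $xz^{k+\ell}-(z-1)^{\ell}$ does not vanish there; hence all evaluations at $z=1$ are legitimate inside the ring of fractional power series guaranteed by Puiseux's theorem, and one checks that $\Phi$ commutes both with passage to the generating function in $x$ and with setting $y=1$. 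Consequently the whole content of the theorem is the computation of $\Phi^{\ell-r}(z^{k}\mathrm{\overline{A}})$ at $z=1$ in closed form, which is the ``recursive operation'' alluded to in the subsection heading.

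The key step is an operator lemma. Applying L'Hôpital at the removable singularity gives $\Phi g(1)=\frac{d}{dz}\big(zg\big)\big|_{z=1}=(1+\theta)g|_{z=1}$, where $\theta=z\frac{d}{dz}$; in particular the first appended $E$-step produces exactly $\mathrm{\overline{B}}_1(z)=\frac{d}{dz}\big(z^{k+1}\mathrm{\overline{A}}\big)=(k+1)z^{k}\mathrm{\overline{A}}+z^{k+1}\mathrm{\overline{A}}'$, matching the definition in the statement. I would then expand $g$ in the local variable $t=z-1$ and show by induction on the number of appended $E$-steps that each application of $\Phi$ acts on the Taylor coefficients at $z=1$ in a controlled shift-plus-identity fashion, so that $\Phi^{m}g(1)$ becomes an explicit weighted combination of the derivatives $g^{(p)}(1)$. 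To surface the Stirling numbers I would rewrite the resulting operator through the classical identities $\theta^{n}=\sum_i S(n,i)z^{i}\frac{d^{i}}{dz^{i}}$ and $\frac{1}{m!}\frac{d^{m}}{dz^{m}}\big(z^{m}g\big)\big|_{z=1}=\binom{\theta+m}{m}g\big|_{z=1}$, and then re-express the derivatives of $\mathrm{\overline{B}}_0$ in terms of those of $\mathrm{\overline{B}}_1=(1+\theta)\mathrm{\overline{B}}_0$, so as to assemble $C_{\ell-r}=\sum_{i=1}^{\ell-r}S(\ell-r,i)\,\mathrm{\overline{B}}_1^{(i-1)}(1)$.

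The hard part will be precisely this last bit of bookkeeping: verifying that nesting the divided-difference operator and evaluating at the removable singularity reproduces exactly the Stirling weights applied to ordinary derivatives of the single function $\mathrm{\overline{B}}_1$. The delicate points are the normalizing factorials, the correct starting index of the sum, and confirming which flavour of Stirling number genuinely appears; I would therefore pin the constants down by first proving an explicit operator expansion $\binom{\theta+m}{m}=\sum_i c_{m,i}\,\theta^{\,i}$, then checking the low-order cases $\ell-r=1,2$ by hand against the path counts $f_r(1)$ and $f_r(2)$ to calibrate all coefficients, and finally promoting the calibrated identity to all $m$ by the induction above. Once the operator identity is secured, substituting $\mathrm{\overline{A}}(z)=\frac{-(z-w_1)\cdots(z-w_\ell)}{xz^{k+\ell}-(z-1)^{\ell}}$ and differentiating at $z=1$ completes the proof, yielding an approach to Theorem \ref{MainResinIntrou} independent of the symmetric-function computation.
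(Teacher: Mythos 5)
Your reduction (pass to $y=1$, take generating functions, and study the divided-difference operator $\Phi g(z)=\frac{zg(z)-g(1)}{z-1}$ iterated $\ell-r$ times and evaluated at $z=1$) is exactly the skeleton of the paper's own proof, and your ``shift-plus-identity'' description of how $\Phi$ acts on Taylor coefficients at $z=1$ is correct: writing $g_p:=g^{(p)}(1)/p!$, one has $(\Phi g)_q=g_{q+1}+g_q$. But the last step of your plan, ``surfacing the Stirling numbers,'' is where the argument collapses: it cannot be done, because iterating the shift-plus-identity action produces binomial coefficients, not Stirling numbers. Carried out correctly, your induction gives
$$C_{\ell-r}=\Phi^{\ell-r}\bigl(z^{k}\mathrm{\overline{A}}\bigr)\Big|_{z=1}
=\sum_{i=1}^{\ell-r}\binom{\ell-r-1}{i-1}\frac{\mathrm{\overline{B}}_1^{(i-1)}(1)}{(i-1)!},$$
where $\mathrm{\overline{B}}_1(z)=\sum_{n\geq 0}B_{1,n}(z,1)x^n=\bigl(z^{k+1}\mathrm{\overline{A}}(z)-\mathrm{\overline{A}}(1)\bigr)/(z-1)$ is the genuine generating function. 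This agrees with the Stirling form only for $\ell-r\leq 2$; for $\ell-r\geq 3$ the statement itself is false. Concretely, take $k=1$, $\ell=4$, $r=1$ and compare coefficients of $x^{1}$: on one hand $f_1(1)=4$ (all four paths from $(0,0)$ to $(3,1)$ lie weakly below $NE^{3}$; equivalently $\binom{k+\ell-1}{\ell-1}=4$ by Theorem \ref{FF1ExpIntrod}); on the other hand, since the coefficient of $x^{0}$ in $\mathrm{\overline{B}}_1(z)$ is $B_{1,0}(z,1)=z+1$, the claimed formula yields $S(3,1)\cdot 2+S(3,2)\cdot 1+S(3,3)\cdot 0=5$, whereas the binomial formula above yields $2+2\cdot 1+0=4$. (Using the statement's literal definition $\mathrm{\overline{B}}_1=(z^{k+1}\mathrm{\overline{A}})'$ makes matters worse: then already $\ell-r=2$ fails, because that definition gives $\mathrm{\overline{B}}_1^{(j)}(1)=(j+1)!\,g_{j+1}$ instead of $j!\,g_{j+1}$.)

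The root cause is precisely the point you flagged as delicate: $\Phi g(1)=\frac{d}{dz}\bigl(zg(z)\bigr)\big|_{z=1}$ is an identity of \emph{values} at $z=1$ only. As operators, $\Phi$ and $1+\theta$ (with $\theta=z\frac{d}{dz}$) differ on all higher Taylor coefficients, since $(\Phi g)_q=g_{q+1}+g_q$ while $\bigl((1+\theta)g\bigr)_q=(q+1)(g_{q+1}+g_q)$; hence $\Phi^{m}g(1)\neq(1+\theta)^{m}g(1)$ as soon as $m\geq 2$. The Stirling numbers in the statement are exactly what $(1+\theta)^{\ell-r-1}$ would produce via $\theta^{n}=\sum_{i}S(n,i)z^{i}\frac{d^{i}}{dz^{i}}$, and this unjustified replacement of $\Phi$ by $1+\theta$ is in fact the flaw in the paper's own proof, which promotes the pointwise L'H\^opital identities to the functional identities $\mathrm{\overline{B}}_1=(z^{k+1}\mathrm{\overline{A}})'$ and $\mathrm{\overline{B}}_i=\mathrm{\overline{B}}_{i-1}+z\mathrm{\overline{B}}_{i-1}'$ before iterating. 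So your proposal, executed honestly, does not prove Theorem \ref{ThmFReccParit}: it disproves it and produces the corrected binomial version displayed above. Your calibration plan would have exposed this had it been pushed to $\ell-r=3$ (and already at $\ell-r=2$ under the literal definition of $\mathrm{\overline{B}}_1$). The same correction propagates to Corollary \ref{c-byproduct}; only the case $\ell-r=1$, i.e.\ Corollary \ref{CollFl-1}, is unaffected, which is why the paper's single worked check after that corollary did not catch the error.
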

\begin{proof}
Follow the notation in Subsection \ref{LatticePath41}. Let
$$\mathrm{\overline{B}}_i(z)=\sum_{n\geq 0}B_{1,n}x^n,\ \ \ 1\leq i\leq \ell.$$
We simultaneously calculate the generating functions for both sides of Equation \eqref{B1B2B3C0C1C2} at $y=1$. This gives the following
equations.
\begin{align*}
&\mathrm{\overline{B}}_1(z)=\frac{z^{k+1}\mathrm{\overline{A}}(z)-C_0}{z-1},\ \
\mathrm{\overline{B}}_2(z)=\frac{z\mathrm{\overline{B}}_1(z)-C_1}{z-1},\ \
\cdots,\ \
\mathrm{\overline{B}}_{\ell}(z)=\frac{z\mathrm{\overline{B}}_{\ell-1}(z)-C_{\ell-1}}{z-1}.
\end{align*}
By L'Hospital's rule, we have
$$\mathrm{\overline{B}}_1(z)=(k+1)z^k\mathrm{\overline{A}}(z)
+z^{k+1}\mathrm{\overline{A}}^{\prime}(z),\ \ \ \mathrm{\overline{B}}_i(z)=\mathrm{\overline{B}}_{i-1}(z)+z\mathrm{\overline{B}}^{\prime}_{i-1}(z),\ \ 2\leq i\leq \ell.$$
By the recursion for the Stirling numbers of the second kind $S(t,i)$ (see \cite[A008277]{Sloane23} or \cite[Chapter 1]{RP.Stanley}):
$$S(t,i)=i\cdot S(t-1,i)+S(t-1,i-1),\ \ S(1,1)=1,\ \ S(t,i)=0,(i>t),$$
we have
$$\mathrm{\overline{B}}_t(z)=\sum_{i=1}^tS(t,i)z^{i-1}\mathrm{\overline{B}}^{(i-1)}_1(z),\ \ 1\leq t\leq \ell,$$
where $\mathrm{\overline{B}}^{(i-1)}_1(z)$ is a $(i-1)$-th derivative of $\mathrm{\overline{B}}_1(z)$ with respect to $z$.
Furthermore, by $C_t=\sum_{n\geq 0}C_{t,n}x^n=\mathrm{\overline{B}}_t(1)$, $1\leq t\leq \ell-1$, we have
$$C_t=\sum_{i=1}^tS(t,i)\mathrm{\overline{B}}_1^{(i-1)}(1),\ \ 1\leq t\leq \ell-1.$$
Note that $C_0=\mathrm{\overline{A}}(1)$.

The proof is then completed by $F_r(x)=xC_{\ell-r}+1$ for $1\leq r\leq \ell$.
\end{proof}

\begin{cor}\label{CollFl-1}
Follow the notation in Theorem \ref{ThmFReccParit}. We have
$$F_{\ell-1}(x)=1+\left(\ell-1-\sum_{i=1}^{\ell}\frac{1}{1-w_i}\right)\prod_{i=1}^{\ell}(1-w_i).$$
\end{cor}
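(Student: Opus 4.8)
The plan is to specialize Theorem \ref{ThmFReccParit} at $r=\ell-1$. Since then $\ell-r=1$, the sum over $i$ collapses to its single term $i=1$; as $S(1,1)=1$ and $\mathrm{\overline{B}}_1^{(0)}(1)=\mathrm{\overline{B}}_1(1)$, this immediately gives
$$F_{\ell-1}(x)=1+x\,\mathrm{\overline{B}}_1(1).$$
Here we tacitly use $\ell\ge 2$, which is exactly the range $1\le r\le \ell-1$ in which Theorem \ref{ThmFReccParit} applies. Thus the whole corollary reduces to evaluating the single quantity $\mathrm{\overline{B}}_1(1)$, and by the closed form $\mathrm{\overline{B}}_1(z)=(k+1)z^k\mathrm{\overline{A}}(z)+z^{k+1}\mathrm{\overline{A}}^{\prime}(z)$ supplied by Theorem \ref{ThmFReccParit} this is
$$\mathrm{\overline{B}}_1(1)=(k+1)\,\mathrm{\overline{A}}(1)+\mathrm{\overline{A}}^{\prime}(1).$$

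Next I would compute the two numbers $\mathrm{\overline{A}}(1)$ and $\mathrm{\overline{A}}^{\prime}(1)$ directly from $\mathrm{\overline{A}}(z)=N(z)/D(z)$ with $N(z)=-\prod_{i=1}^{\ell}(z-w_i)$ and $D(z)=xz^{k+\ell}-(z-1)^{\ell}$. The observation that makes this painless is that $\mathrm{\overline{A}}$ is regular at $z=1$: as a formal quantity $D(1)=x\neq 0$, so no cancellation or L'Hospital is needed at this point (in contrast with the derivation of $\mathrm{\overline{B}}_1$ itself). One has $D(1)=x$, and, since $\ell\ge 2$ forces $(z-1)^{\ell-1}$ to vanish at $z=1$, also $D^{\prime}(1)=(k+\ell)x$. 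For the numerator, $N(1)=-\prod_{i=1}^{\ell}(1-w_i)$, while the logarithmic derivative gives $N^{\prime}(1)=N(1)\sum_{i=1}^{\ell}\frac{1}{1-w_i}$. Substituting into the quotient rule yields $\mathrm{\overline{A}}(1)=N(1)/x$, which equals $Q(x)=C_0$ of Corollary \ref{Annader-Mie} and serves as a consistency check, together with
$$\mathrm{\overline{A}}^{\prime}(1)=\frac{N^{\prime}(1)D(1)-N(1)D^{\prime}(1)}{D(1)^2}=\frac{N^{\prime}(1)-(k+\ell)N(1)}{x}.$$

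Finally I would assemble the pieces. Substituting these into $\mathrm{\overline{B}}_1(1)$, the terms $(k+1)N(1)$ and $-(k+\ell)N(1)$ combine to $(1-\ell)N(1)$, so that
$$x\,\mathrm{\overline{B}}_1(1)=N^{\prime}(1)+(1-\ell)N(1)=\prod_{i=1}^{\ell}(1-w_i)\left(\ell-1-\sum_{i=1}^{\ell}\frac{1}{1-w_i}\right),$$
where the last equality inserts $N(1)=-\prod_{i=1}^{\ell}(1-w_i)$ and $N^{\prime}(1)=N(1)\sum_{i=1}^{\ell}\frac{1}{1-w_i}$. Combined with $F_{\ell-1}(x)=1+x\,\mathrm{\overline{B}}_1(1)$, this is precisely the asserted formula. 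I do not expect any genuine obstacle: the argument is a single application of the quotient rule plus a logarithmic derivative, and the only points requiring care are the sign of $N(z)$, the use of $\ell\ge 2$ to kill the $(z-1)^{\ell-1}$ term in $D^{\prime}(1)$, and verifying that $z=1$ is a regular point of $\mathrm{\overline{A}}$ so that direct substitution (rather than a limiting argument) is legitimate.
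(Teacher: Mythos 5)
Your proposal is correct and follows essentially the same route as the paper: both specialize Theorem \ref{ThmFReccParit} at $r=\ell-1$ to get $F_{\ell-1}(x)=1+x\,\mathrm{\overline{B}}_1(1)$ and then evaluate $\mathrm{\overline{A}}(1)$ and $\mathrm{\overline{A}}^{\prime}(1)$ from the rational expression for $\mathrm{\overline{A}}(z)$. The only difference is that you spell out the quotient-rule and logarithmic-derivative computation (and the regularity of $\mathrm{\overline{A}}$ at $z=1$) that the paper states without detail; your intermediate values agree with the paper's.
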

\begin{proof}
By Theorem \ref{ThmFReccParit}, we have
$$\mathrm{\overline{A}}(1)=\frac{-1}{x}\prod_{j=1}^{\ell}(1-w_j),\ \ \ \ \
\mathrm{\overline{A}}^{\prime}(1)=-\frac{1}{x}\prod_{j=1}^{\ell}(1-w_j)
\left(\sum_{i=1}^{\ell}\frac{1}{1-w_i}-\ell-k\right).$$
Moreover,
$$\mathrm{\overline{B}}_1(1)=(k+1)\mathrm{\overline{A}}(1)+\mathrm{\overline{A}}^{\prime}(1)
=\frac{1}{x}\left(\ell-1-\sum_{i=1}^{\ell}\frac{1}{1-w_i}\right)\prod_{j=1}^{\ell}(1-w_j).$$
Furthermore, by Theorem \ref{ThmFReccParit}, we have $F_{\ell-1}=1+x\mathrm{\overline{B}}_1(1)$. This completes the proof.
\end{proof}

Theorem \ref{MainResinIntrou} implies that
\begin{align*}
F_{\ell-1}(x)&=\sum_{i=0}^{\ell-2}(-1)^{i}\binom{1+i}{i}e_{i+2}(w_1,...,w_{\ell})
\\&=e_2(w_1,...,w_{\ell})-2e_3(w_1,...,w_{\ell})+\cdots +(-1)^{\ell-2}(\ell-1)e_{\ell}(w_1,...,w_{\ell}).
\end{align*}
This result can also be explained by Corollary \ref{CollFl-1} as follows.
\begin{align*}
F_{\ell-1}(x)&=1+(1-w_1)\cdots (1-w_{\ell})\left(\ell-1-\sum_{i=1}^{\ell}\frac{1}{1-w_i}\right)
\\&=1+(\ell-1)(e_0-e_1+e_2+\cdots+(-1)^{\ell}e_{\ell})-\left(\frac{\prod_{j=1}^{\ell}(1-w_j)}{1-w_1}+\cdots +\frac{\prod_{j=1}^{\ell}(1-w_j)}{1-w_{\ell}}\right)
\\&=1+(\ell-1)(e_0-e_1+\cdots+(-1)^{\ell}e_{\ell})-(\ell e_0-(l-1)e_1+\cdots +(-1)^{\ell-1}e_{\ell-1}+0\cdot e_{\ell})
\\&=e_2-2e_3+\cdots +(-1)^{\ell-2}(\ell-1)e_{\ell},
\end{align*}
where $e_i=e_i(w_1,...,w_{\ell})$ for $0\leq i\leq \ell$.

\begin{cor}\label{c-byproduct}
Follow the notation in Theorem \ref{ThmFReccParit}. Then we have
\begin{align*}
&\sum_{i=1}^{\ell-r}S(\ell-r,i)x\Big((k+1)z^k\mathrm{\overline{A}}_j(z)
+z^{k+1}\mathrm{\overline{A}}_j^{(1)}(z)\Big)^{(i-1)}\Big|_{z=1}
\\=&\left\{
             \begin{array}{ll}
             -1 &\ \text{if}\ \ j=0;\ \ \ \ \ \ \ \ \ \ \ \ \ \ \ \ \\
             0 &\ \text{if}\ \ 1\leq j\leq \ell-r;\ \ \ \ \ \ \\
            (-1)^{j+r-1-\ell}\binom{j-1}{j+r-1-\ell} &\ \text{if}\ \ \ell-r+1\leq j\leq \ell,
             \end{array}
\right.
\end{align*}
where $\mathrm{\overline{A}}_j(z)=\frac{(-1)^{j+1}z^{\ell-j}}{xz^{k+\ell}-(z-1)^{\ell}}$ and the $S(\ell-r,i)$ are Stirling numbers of the second kind.
\end{cor}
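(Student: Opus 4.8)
The plan is to read the corollary as the coefficientwise comparison of the two expressions for $F_r(x)$ furnished by Theorem \ref{ThmFReccParit} and Theorem \ref{MainResinIntrou}, once $\mathrm{\overline{A}}(z)$ is decomposed into the elementary pieces $\mathrm{\overline{A}}_j(z)$. First I would expand the numerator of $\mathrm{\overline{A}}(z)$ in elementary symmetric functions: writing $e_j=e_j(w_1,\dots,w_\ell)$, the identity $-\prod_{i=1}^{\ell}(z-w_i)=\sum_{j=0}^{\ell}(-1)^{j+1}e_j z^{\ell-j}$ and division by $xz^{k+\ell}-(z-1)^{\ell}$ give the exact decomposition
\[
\mathrm{\overline{A}}(z)=\sum_{j=0}^{\ell}e_j\,\mathrm{\overline{A}}_j(z),\qquad \mathrm{\overline{A}}_j(z)=\frac{(-1)^{j+1}z^{\ell-j}}{xz^{k+\ell}-(z-1)^{\ell}}.
\]

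The key structural observation is that the map $\mathrm{\overline{A}}\mapsto \mathrm{\overline{B}}_1=(k+1)z^k\mathrm{\overline{A}}+z^{k+1}\mathrm{\overline{A}}^{\prime}$, each derivative $(\cdot)^{(i-1)}$, and evaluation at $z=1$ are all $\mathbb{C}$-linear, while the $e_j$ are constants with respect to $z$. Setting $\mathrm{\overline{B}}_{1,j}(z)=(k+1)z^k\mathrm{\overline{A}}_j(z)+z^{k+1}\mathrm{\overline{A}}_j^{\prime}(z)$ and applying Theorem \ref{ThmFReccParit} termwise, I obtain
\[
F_r(x)-1=\sum_{i=1}^{\ell-r}S(\ell-r,i)x\,\mathrm{\overline{B}}_1^{(i-1)}(1)=\sum_{j=0}^{\ell}e_j\,L_j,\qquad L_j:=\sum_{i=1}^{\ell-r}S(\ell-r,i)x\,\mathrm{\overline{B}}_{1,j}^{(i-1)}(1),
\]
where $L_j$ is precisely the left-hand side of the corollary (using $\mathrm{\overline{A}}_j^{(1)}=\mathrm{\overline{A}}_j^{\prime}$).

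On the other hand, reindexing Theorem \ref{MainResinIntrou} by $j=\ell-r+1+i$, so that $\binom{\ell-r+i}{i}=\binom{j-1}{j+r-1-\ell}$ and $(-1)^i=(-1)^{j+r-1-\ell}$, gives
\[
F_r(x)-1=-e_0+\sum_{j=\ell-r+1}^{\ell}(-1)^{j+r-1-\ell}\binom{j-1}{j+r-1-\ell}e_j=\sum_{j=0}^{\ell}e_j c_j,
\]
where $c_j$ denotes exactly the piecewise constants asserted in the corollary ($c_0=-1$, $c_j=0$ for $1\le j\le \ell-r$, and the binomial value otherwise). Subtracting the two expansions yields $\sum_{j=0}^{\ell}(L_j-c_j)e_j=0$, so the corollary will follow as soon as $1,e_1,\dots,e_\ell$ are shown to be linearly independent over $\mathbb{C}$.

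Establishing this linear independence is the step I expect to be the main obstacle. My plan is to pass to the variables $u_i=w_i-1$: the transition $e_j(w)=\sum_{a=0}^{j}\binom{\ell-a}{j-a}e_a(u)$ is unitriangular, so it suffices to prove $1,e_1(u),\dots,e_\ell(u)$ are independent. Since each branch can be written $u_i=f(\xi^i x^{1/\ell})$ for a single power series $f$ and a primitive $\ell$-th root of unity $\xi$, the symmetric functions $e_a(u)$ are invariant under $x^{1/\ell}\mapsto \xi x^{1/\ell}$ and hence are genuine power series in $x$ with $e_a(u)=O(x)$ for $a\ge 1$; the constant term then kills the $e_0$-coefficient, and I would extract the remaining coefficients from the higher-order $x$-expansions, controlled explicitly through the power sums $p_m(u)$ of Proposition \ref{PropPPM} together with Newton's identities. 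The delicate point is that all $e_a(u)$ share valuation exactly $x^{1}$, so independence cannot be read off from leading orders alone and must be organized as a full-rank statement for the coefficient matrix $\big([x^b]e_a(u)\big)_{a,b}$. I note finally that a direct evaluation of $L_j$ straight from the explicit rational function $\mathrm{\overline{A}}_j(z)$, bypassing Theorem \ref{MainResinIntrou} altogether, would both prove the corollary and, by reversing the comparison above, yield an independent proof of Theorem \ref{MainResinIntrou}.
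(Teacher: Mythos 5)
Your setup --- the expansion $\mathrm{\overline{A}}(z)=\sum_{j=0}^{\ell}e_j\,\mathrm{\overline{A}}_j(z)$ coming from $-\prod_i(z-w_i)=\sum_j(-1)^{j+1}e_jz^{\ell-j}$, pushing it through $\mathrm{\overline{B}}_1$ and Theorem \ref{ThmFReccParit} by linearity, and comparing with the reindexed Theorem \ref{MainResinIntrou} --- is exactly the paper's proof. The paper does precisely this and then concludes in one sentence that the corollary ``is followed by Theorem \ref{MainResinIntrou}'', i.e.\ it equates the coefficients of $e_0,\dots,e_\ell$ without comment. So the portion of your argument that you present as complete coincides with the paper's entire proof, and the issue you isolate (why is one allowed to equate coefficients?) is a genuine point that the paper passes over in silence.

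However, your proposed completion of that step has a real gap, in two respects. First, the independence you ask for is over the wrong field. The quantities $L_j$ are a priori Laurent polynomials in $x$, not complex scalars: each evaluation $\mathrm{\overline{A}}_j^{(m)}(1)$ produces negative powers of $x$, because the kernel $xz^{k+\ell}-(z-1)^{\ell}$ equals $x$ at $z=1$. Hence $\sum_j(L_j-c_j)e_j=0$ is a relation with coefficients in $\mathbb{C}[x,x^{-1}]$, and $\mathbb{C}$-linear independence of $1,e_1,\dots,e_\ell$ (your full-rank coefficient matrix) would not force $L_j=c_j$; you would need independence over $\mathbb{C}(x)$. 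This defect is repairable: since $i\le \ell-r\le \ell-1$, only $z$-derivatives of the kernel of order $<\ell$ ever occur, and all of these at $z=1$ are constant multiples of $x$ (the $(z-1)^{\ell}$ part still vanishes there); a short induction on the order of differentiation then shows each $x\mathrm{\overline{B}}_{1,j}^{(i-1)}(1)$ is in fact a constant, and only after this observation does $\mathbb{C}$-independence suffice. Second, and more seriously, the independence itself is never established: you explicitly leave the full-rank statement open, correctly noting that all $e_a(w_1-1,\dots,w_\ell-1)$ have $x$-valuation exactly $1$ (indeed $[x]\,e_a=\binom{k+\ell}{\ell-a}$ by Proposition \ref{PropPPM} and Newton's identities), so leading terms cannot decide it. Since this is the only step beyond what the paper already does, the proposal as written does not prove the corollary. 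The clean ways to close it are either to actually prove that independence, or --- as you suggest in your last sentence --- to evaluate $L_j$ directly from the explicit rational function $\mathrm{\overline{A}}_j(z)$, which needs no independence statement at all and would, by the comparison you set up, simultaneously yield an independent proof of Theorem \ref{MainResinIntrou}; this is exactly the possibility the paper alludes to when it says simple proofs of the byproducts may give simple proofs of Theorem \ref{MainResinIntrou}.
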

\begin{proof}
By Equation \eqref{AoverlineZ}, we have
$$\mathrm{\overline{A}}(z)=\sum_{j=0}^{\ell}\mathrm{\overline{A}}_j(z)e_j(w_1,...,w_{\ell}).$$
Therefore
$$\mathrm{\overline{B}}_1(z)=(k+1)z^k\mathrm{\overline{A}}(z)
+z^{k+1}\mathrm{\overline{A}}^{\prime}(z)=\sum_{j=0}^{\ell}((k+1)z^k\mathrm{\overline{A}}_j(z)
+z^{k+1}\mathrm{\overline{A}}_j^{\prime}(z))e_j(w_1,...,w_{\ell}).$$
By Theorem \ref{ThmFReccParit}, we have
$$F_r(x)=1+\sum_{j=0}^{\ell}\left(\sum_{i=1}^{\ell-r}S(\ell-r,i)x\Big((k+1)z^k\mathrm{\overline{A}}_j(z)
+z^{k+1}\mathrm{\overline{A}}_j^{(1)}(z)\Big)^{(i-1)}\Big|_{z=1} \right)e_j(w_1,...,w_{\ell}).$$
The corollary is followed by Theorem \ref{MainResinIntrou}.
\end{proof}

\subsection{A Symmetric Function Formula}
It is natural to solve the system \eqref{system-equat} for $C_{\ell-r}$ using Kramer's rule. Rewrite \eqref{system-equat} as
\begin{align*}
\sum_{i=1}^{\ell}\left(\frac{w_j}{w_j-1}\right)^{i-1}xC_{\ell-i}=(w_j-1),\ \ 1\leq j\leq \ell.
\end{align*}
Let $\widehat{w}_i=\frac{w_i}{w_i-1}$, i.e., $w_i=\frac{\widehat{w}_i}{\widehat{w}_i-1}$ for $1\leq i\leq \ell$. By Kramer's rule, we have $xC_{\ell-r}=\frac{D_r}{D}$, where
\begin{align*}
D=\det\left(\begin{array}{cccc}
1 & \widehat{w}_1 &\cdots & \widehat{w}_1^{\ell-1}  \\
1 & \widehat{w}_2 &\cdots & \widehat{w}_2^{\ell-1} \\
\vdots &\vdots &\ddots & \vdots  \\
1 & \widehat{w}_{\ell} &\cdots & \widehat{w}_{\ell}^{\ell-1}
\end{array}\right)
=\det V(\widehat{w}_1,\widehat{w}_2,...,\widehat{w}_{\ell}).
\end{align*}
and
\begin{align*}
D_r=\det\left(\begin{array}{cccccccc}
1 & \widehat{w}_1 &\cdots & \widehat{w}_1^{r-2} & w_1-1 & \widehat{w}_1^{r} & \cdots & \widehat{w}_1^{\ell-1} \\
1 & \widehat{w}_2 &\cdots & \widehat{w}_2^{r-2} & w_2-1 & \widehat{w}_2^{r} & \cdots & \widehat{w}_2^{\ell-1} \\
\vdots &\vdots &\ddots & \vdots & \vdots & \vdots & \ddots  & \vdots \\
1 & \widehat{w}_{\ell} &\cdots & \widehat{w}_{\ell}^{r-2} & w_{\ell}-1 & \widehat{w}_{\ell}^{r} & \cdots & \widehat{w}_{\ell}^{\ell-1}
\end{array}\right).
\end{align*}
Now rewrite
$$w_i-1=-\frac{\widehat{w}_i}{1-\widehat{w}_i}-1=-\sum_{m\geq 0}(\widehat{w}_i)^m,\ \ \ 1\leq i\leq \ell,$$
and put in the above equation for $D_r$ and expand by linearity.
When $0\leq m\leq \ell$, the corresponding term is $0$, except that the term corresponding to $m=r-1$ is $-\det V(\widehat{w}_1,\widehat{w}_2,...,\widehat{w}_{\ell})$. The remaining terms correspond to $m\geq \ell$.
By Lemma \ref{SchurDeterm} and some simple calculations, we have
$$D_r=-\det V(\widehat{w}_1,...,\widehat{w}_{\ell})
+(-1)^{\ell-r+1}\det V(\widehat{w}_1,...,\widehat{w}_{\ell})\sum_{m\geq l}s_{(m-\ell+1,1^{\ell-r})}(\widehat{w}_1,...,\widehat{w}_{\ell}),$$
where $(m-\ell+1,1^{\ell-r})=(m-\ell+1,1,1,...,1)$ with length $\ell-r+1$.
Therefore, we have

\begin{equation}\label{FrKramerS}
F_r(x)=1+xC_{\ell-r}=1+\frac{D_r}{D}=(-1)^{\ell-r+1}\sum_{m\geq 1}s_{(m,1^{\ell-r})}(\widehat{w}_1,...,\widehat{w}_{\ell}).
\end{equation}
By Theorem \ref{MainResinIntrou}, we complete the proof of Theorem \ref{SFdetermidentify} as follows.
\begin{thm}\label{SFdetermidentify}
Let $\ell,r \in \mathbb{P}$  and $\ell\geq r$. Let $\widehat{w}_i=\frac{w_i}{w_i-1}$, $1\leq i\leq \ell$. We have
$$\sum_{m\geq 1}s_{(m,1^{\ell-r})}(\widehat{w}_1,...,\widehat{w}_{\ell})
=\sum_{i=0}^{r-1}(-1)^{\ell-r+i+1}\binom{\ell-r+i}{i}e_{\ell-r+i+1}(w_1,...,w_{\ell}).$$
\end{thm}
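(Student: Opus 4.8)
The plan is to recognize that $F_r(x)$ has now been evaluated by two genuinely different methods, and that simply equating the two closed forms produces the claimed identity. No new computation is needed beyond sign bookkeeping, since both ingredients are already available in the excerpt.

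First I would record the Kramer's rule evaluation carried out immediately above the statement. Solving the system \eqref{system-equat} for $C_{\ell-r}$ by Kramer's rule, substituting $w_i-1=-\sum_{m\ge 0}\widehat{w}_i^{\,m}$ into the numerator determinant $D_r$, expanding by linearity, and identifying the surviving minors as Schur functions through Lemma \ref{SchurDeterm}, yields Equation \eqref{FrKramerS}:
\begin{equation*}
F_r(x)=1+xC_{\ell-r}=(-1)^{\ell-r+1}\sum_{m\ge 1}s_{(m,1^{\ell-r})}(\widehat{w}_1,\dots,\widehat{w}_\ell).
\end{equation*}
Second, Theorem \ref{MainResinIntrou} supplies the alternative closed form
\begin{equation*}
F_r(x)=\sum_{i=0}^{r-1}(-1)^i\binom{\ell-r+i}{i}e_{\ell-r+i+1}(w_1,\dots,w_\ell).
\end{equation*}

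Equating these two expressions for $F_r(x)$ and multiplying through by $(-1)^{\ell-r+1}$, using $(-1)^{\ell-r+1}(-1)^i=(-1)^{\ell-r+i+1}$, gives the asserted identity at once. The only real obstacle here is conceptual rather than computational: the content of the theorem is that the hook Schur functions $s_{(m,1^{\ell-r})}$ in the transformed variables $\widehat{w}_i=w_i/(w_i-1)$ collapse, after summation over $m$, into a short alternating combination of elementary symmetric functions in the original variables $w_i$. The two-route argument delivers this collapse for free. A self-contained proof avoiding Theorem \ref{MainResinIntrou} would instead have to establish it directly, presumably via the dual Jacobi--Trudi expansion of the hook shapes together with the pullback of $e_j(\widehat{w})$ and $h_j(\widehat{w})$ under the substitution $\widehat{w}_i=w_i/(w_i-1)$ --- which is precisely the delicate bookkeeping the paper sidesteps by computing $F_r(x)$ twice.
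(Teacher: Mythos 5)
Your proposal is correct and follows exactly the paper's own route: the paper derives Equation \eqref{FrKramerS} by Kramer's rule with the substitution $w_i-1=-\sum_{m\geq 0}\widehat{w}_i^{\,m}$ and Lemma \ref{SchurDeterm}, and then obtains Theorem \ref{SFdetermidentify} by equating this with the closed form of $F_r(x)$ from Theorem \ref{MainResinIntrou}, with the same sign bookkeeping $(-1)^{\ell-r+1}(-1)^i=(-1)^{\ell-r+i+1}$ you describe. Nothing is missing; your observation that a self-contained proof would require a direct hook-shape computation is also consistent with the paper's remark that simpler proofs of this byproduct could yield simpler proofs of Theorem \ref{MainResinIntrou}.
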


\begin{exa}
Suppose $\ell=r$. Theorem \ref{MainResinIntrou} implies that
\begin{equation}\label{IntrodFlllll}
F_{\ell}(x)=\sum_{i=0}^{\ell-1}(-1)^ie_{i+1}(w_1,...,w_{\ell}).
\end{equation}
This formula can be explained by Equation \eqref{FrKramerS} as follows.
$$F_{\ell}(x)=-\sum_{m\geq 1}s_m(\widehat{w}_1,...,\widehat{w}_{\ell}).$$
Because $s_m(x_1,x_2,...)=h_m(x_1,x_2,...)$ and $\sum_{m\geq 0}h_m(x_1,x_2,...)t^m=\frac{1}{\prod_{i}(1-x_it)}$ (see, e.g., \cite[Theorem 7.6.1]{RP.Stanley99}), we have
\begin{align*}
F_{\ell}(x)&=-\sum_{m\geq 1}h_m(\widehat{w}_1,...,\widehat{w}_{\ell})=1-\sum_{m\geq 0}h_m(\widehat{w}_1,...,\widehat{w}_{\ell})
\\&=1-\frac{1}{(1-\widehat{w}_1)(1-\widehat{w}_2)\cdots (1-\widehat{w}_{\ell})}
=1-(1-w_1)(1-w_2)\cdots (1-w_{\ell})
\\&=e_1(w_1,...,w_{\ell})-e_2(w_1,...,w_{\ell})+\cdots+(-1)^{\ell-1}e_{\ell}(w_1,...,w_{\ell}).
\end{align*}
\end{exa}

\section{Concluding Remark}
Let $S_m$ ($2\leq m\leq 6$) be the sequences [A079489], [A213403], [A213404], [A213405] and [A213406] in OEIS \cite{Sloane23}.
We have proved in Theorem \ref{PeriodicWallGF} that $S_m$ is in fact our $\overline{f}_m(n)$.
Now we can enrich the content of these sequences. We summarize as follows:

\begin{enumerate}
  \item The sequences $S_m$ are the number of rectangular Young tableaux $\mathcal{B}_m^n$ with periodic walls of a block $\mathcal{B}_m$.

  \item The sequences $S_m$ are the number of lattice paths from $(0,0)$ to $(mn,mn)$ that never go below the path $N(E^mN^m)^{n-1}E^mN^{m-1}$. (See Proposition \ref{DeterFormuFM}.)

  \item A recursive formula for sequences $S_m$ is shown in Proposition \ref{RecurFormula}.

  \item A determinant formula for sequences $S_m$ is given in Proposition \ref{DeterFormuFM}.

  \item The generating functions of sequences $S_m$ are given in Corollary \ref{PeriodicWallGF}. These are also the explanation of these sequences in OEIS.

  \item The generating functions of sequences $S_m$ are given by the exponential formulas in Theorem \ref{Mainresult22} with $k=l=m$.
\end{enumerate}

In this paper, we only enumerated Young tableaux with periodic walls over $\mathcal{B}_m$. The idea works for all Young building blocks with horizontal walls. Our next project is to study these generating functions.

%\noindent 
%\textbf{Declaration of competing interest:} None.
%
%\noindent
%\textbf{Data availability:} No data is available.

\noindent
{\small \textbf{Acknowledgements:}
%The authors would like to show their sincere appreciations to all suggestions improving the presentation of this paper.
We are grateful to Menghao Qu for many useful suggestions.
This work is partially supported by the National Natural Science Foundation of China [12071311].

\end{document}